\DeclareMathAlphabet{\pazocal}{OMS}{zplm}{m}{n}
		\rule{\textwidth}{0.4pt}\\ 
	\newlength{\myindent}
		\newcounter{subsubsections}[subsection]
		\renewcommand{\subsubsection}[1]{%
		\refstepcounter{subsubsections}%
		\par\medskip
		\noindent\textbf{\thesubsection.\thesubsubsections. #1.} 
		\@afterheading 
		}
		\newcommand{\Cb}{\pazocal{C}}
		\newcommand{\Db}{\pazocal{D}}
		\newcommand{\Hb}{\pazocal{H}}
		\newcommand{\Ib}{\pazocal{I}}
		\newcommand{\Lb}{\pazocal{L}}
		\newcommand{\Mb}{\pazocal{M}}
		\newcommand{\DD}{\mathbb{D}}
		\newcommand{\II}{\mathbb{I}}
		\newcommand{\LL}{\mathbb{L}}
		\newcommand{\TT}{\mathbb{T}}
		\newcommand{\Bs}{\mathscr{B}}
		\newcommand{\Ms}{\mathscr{M}}
		\newcommand{\Ns}{\mathscr{N}}
		\newcommand{\Ws}{\mathscr{W}}
		\newtheoremstyle{mythstyle}
			{}    
			{}    
			{\itshape}  
			{0pt}       
			{\bfseries} 
			{}          
			{5pt plus 1pt minus 1pt} 
			{}          
		\theoremstyle{mythstyle} 
		\newtheorem{RAWtheorem}{\textbf{Theorem}}[section]
		\newtheorem{RAWlemma}[RAWtheorem]{\textbf{Lemma}}
		\newtheorem{RAWproposition}[RAWtheorem]{\textbf{Proposition}}
		\newtheorem{RAWclaim}[RAWtheorem]{\textbf{Claim}}
		\newtheorem{RAWcorollary}[RAWtheorem]{\textbf{Corollary}}
		\newtheorem{RAWdefinition}[RAWtheorem]{\textbf{Definition}}
		\newtheorem{RAWassumption}[RAWtheorem]{\textbf{Assumption}}
		\newtheorem{RAWremark}[RAWtheorem]{\textbf{Remark}}
			\newenvironment{theorem}{\vspace{2mm}\begin{mdframed}\vspace{-2.5mm}\begin{RAWtheorem}}{\end{RAWtheorem}\end{mdframed}\vspace{2mm}}
			\newenvironment{lemma}{\vspace{3mm}\begin{mdframed}\vspace{-2.5mm}\begin{RAWlemma}}{\end{RAWlemma}\end{mdframed}\vspace{3mm}}
			\newenvironment{proposition}{\begin{mdframed}\vspace{-3mm}\begin{RAWproposition}}{\end{RAWproposition}\end{mdframed}}
			\newenvironment{claim}{\begin{mdframed}\vspace{-3mm}\begin{RAWclaim}}{\end{RAWclaim}\end{mdframed}}
			\newenvironment{corollary}{\begin{mdframed}\vspace{-3mm}\begin{RAWcorollary}}{\end{RAWcorollary}\end{mdframed}}
			\newenvironment{definition}{\begin{mdframed}\vspace{-3mm}\begin{RAWdefinition}}{\end{RAWdefinition}\end{mdframed}}
			\newenvironment{remark}{\begin{mdframed}\vspace{-3mm}\begin{RAWremark}}{\end{RAWremark}\end{mdframed}}
			\newenvironment{assumption}{\begin{mdframed}\vspace{-3mm}\begin{RAWassumption}}{\end{RAWassumption}\end{mdframed}}
			\newenvironment{theorem}{\begin{RAWtheorem}}{\end{RAWtheorem}}
			\newenvironment{lemma}{\begin{RAWlemma}}{\end{RAWlemma}}
			\newenvironment{proposition}{\begin{RAWproposition}}{\end{RAWproposition}}
			\newenvironment{definition}{\begin{RAWdefinition}}{\end{RAWdefinition}}
			\newenvironment{remark}{\begin{RAWremark}}{\end{RAWremark}}
		\renewenvironment{proof}[1][\proofname]{\hspace{-\myindent}{\bfseries #1.}}{\qed} 
		\numberwithin{equation}{section} 
	\definecolor{Neumann}{RGB}{159, 29, 53}			
	\definecolor{BulkField}{RGB}{34, 139, 34}		
	\definecolor{BulkRoad}{RGB}{0, 0, 205}			
	\definecolor{RobinExchange}{RGB}{204, 119, 34}	
	\definecolor{RoadExchange}{RGB}{148, 0, 211}	
	\definecolor{Exchange}{RGB}{0, 120, 150}		
	\definecolor{RobinExchangeDarker}{RGB}{128, 74, 21}	
	\definecolor{gray}{rgb}{.7,.7,.7}
	\definecolor{fushia}{RGB}{253, 53, 188}
	\definecolor{green}{rgb}{0,.5,0}
	\definecolor{green2}{rgb}{0,.9,0}
	\definecolor{mybgcolor}{RGB}{244,255,247}
	\definecolor{mybordercolor}{RGB}{204,204,204}
	\definecolor{mytextcolor}{RGB}{156,153,155}
	\newcommand{\marginalnote}[5]{
		\begin{tikzpicture}[overlay, remember picture, shift={(#3,#4)}]
			\node[fill=mybgcolor, draw=mybordercolor, thick, inner sep=2mm, text width=#1, minimum height=#2] (background) {};
			\node[above=0 of background.south, text width=#1, align=left, inner sep=2mm] {
				\begin{minipage}{#1}
					\color{mytextcolor}\ttfamily {\tiny #5}
				\end{minipage}
			};
		\end{tikzpicture}
	}
	\newcounter{FIGURE}
	\renewcommand{\theFIGURE}{\Roman{FIGURE}}
	\newcounter{ITweak}
	\newcounter{ITtight}
	\newcounter{ITDidi}
	\newcounter{ITUsefullInEq}
	\newcommand{\croch}[1]{[#1]}						
	\newcommand{\Croch}[1]{\left[#1\right]}				
	\newcommand{\Prth}[1]{\left(#1\right)}				
	\newcommand{\prth}[1]{(#1)}							
	\newcommand{\glmt}[1]{``#1''}						
	\newcommand{\acco}[1]{\lbrace#1\rbrace}				
	\newcommand{\Acco}[1]{\left\lbrace#1\right\rbrace }	
	\newcommand{\prtH}[1]{\big(#1\big)}				
	\newcommand{\prtHH}[1]{\Big(#1\Big)}				
	\newcommand{\prtHHH}[1]{\bigg(#1\bigg)}			
	\newcommand{\crocH}[1]{\big[#1\big]}				
	\newcommand{\crocHH}[1]{\Big[#1\Big]}				
	\newcommand{\crocHHH}[1]{\bigg[#1\bigg]}			
	\newcommand{\crocHHHH}[1]{\Bigg[#1\Bigg]}			
	\newcommand{\accO}[1]{\big\lbrace#1\big\rbrace}			
	\newcommand{\accOO}[1]{\Big\lbrace#1\Big\rbrace}		
	\newcommand{\accOOO}[1]{\bigg\lbrace#1\bigg\rbrace}		
	\newcommand{\accOOOO}[1]{\Bigg\lbrace#1\Bigg\rbrace}	
	\newcommand{\verti}[1]{\vert #1 \vert}
	\newcommand{\Verti}[1]{\left\vert #1 \right\vert}
	\newcommand{\vertii}[2]{\Vert #1 \Vert_{#2}}
	\newcommand{\vertI}[1]{\big\vert #1 \big\vert}
	\newcommand{\vertII}[1]{\Big\vert #1 \Big\vert}
	\newcommand{\vertIII}[1]{\bigg\vert #1 \bigg\vert}
	\newcommand{\intervalleff}[2]{\left[#1,#2\right]}
	\newcommand{\intervalleoo}[2]{\left(#1,#2\right)}
	\newcommand{\intervalleE}[2]{\llbracket#1;#2\rrbracket}
	\newcommand{\gap}{1mm} 
	\newcommand{\gapp}{1mm} 
	\newcommand{\gappp}{1mm} 
	\newcommand{\miDD}{\Big|}
	\newcommand{\smaller}[2]{\scalebox{#1}{#2}}
	\newcommand{\timess}{\hspace{0.5mm}\scalebox{0.8}{$\times$}\hspace{0.5mm}}
	\newcommand{\point}{\includegraphics[scale=1]{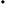}}
	\newcommand{\sbullet}{\raisebox{0.36mm}{$\smaller{0.65}{$\bullet$}$}} 
	\newcommand{\HEART}{\hspace{0.2mm}\raisebox{-0.3mm}{\includegraphics[scale=1]{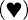}}\hspace{0.2mm}}
	\newcommand{\CLUB}{\hspace{0.2mm}\raisebox{-0.3mm}{\includegraphics[scale=1]{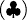}}\hspace{0.2mm}}
	\newcommand{\SPADE}{\hspace{0.2mm}\raisebox{-0.3mm}{\includegraphics[scale=1]{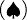}}\hspace{0.2mm}}
	\newcommand{\DIAMOND}{\hspace{0.2mm}\raisebox{-0.3mm}{\includegraphics[scale=1]{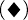}}\hspace{0.2mm}}
	\newcommand{\etapoint}{\hspace{0.6mm}\includegraphics[scale=0.82]{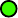}\hspace{0.2mm}}
	\newcommand{\xipoint}{\hspace{0.5mm}\includegraphics[scale=0.82]{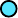}\hspace{0.2mm}}
	\newcommand{\SensInterdit}{\hspace{0.2mm}\includegraphics[scale=0.80]{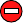}\hspace{0.2mm}}
	\newcommand{\bara}[1]{\hspace{0.1mm}{\stackrel{\hspace{-0.12mm}\includegraphics[scale=1]{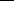}}{#1}}\hspace{0.3mm}}	
	\newcommand{\itembulletRED}{\raisebox{0.45mm}{\scalebox{1}{\textcolor{Neumann}{$\bullet$}}}}
	\newcommand{\itembulletGREEN}{\raisebox{0.45mm}{\scalebox{1}{\textcolor{BulkField}{$\bullet$}}}}
	\newcommand{\itembulletBLUE}{\raisebox{0.45mm}{\scalebox{1}{\textcolor{BulkRoad}{$\bullet$}}}}
	\newcommand{\itembulletLILA}{\raisebox{0.45mm}{\scalebox{1}{\textcolor{RoadExchange}{$\bullet$}}}}
	\newcommand{\itembulletBROWN}{\raisebox{0.45mm}{\scalebox{1}{\textcolor{RobinExchange}{$\bullet$}}}}
	\newcommand{\thickminus}{\mathbin{\text{\rule[0.9mm]{2.5mm}{0.3mm}}}}
		\DeclareSymbolFont{extraup}{U}{zavm}{m}{n}
		\DeclareMathSymbol{\varheart}{\mathalpha}{extraup}{86}
		\DeclareMathSymbol{\vardiamond}{\mathalpha}{extraup}{87}
	\newcommand{\sfrac}[2]{\textstyle\frac{#1}{#2}\displaystyle}
	\newcommand{\bk}[2]{\langle #1 \rangle_{#2}}
	\newcommand{\indicatrice}[1]{\mathds{1}_{#1}}	
	\newcommand{\tr}{\text{Tr}} 
	\newcommand{\Tr}{\text{Tr}}
	\newcommand{\NN}{\scalebox{0.6}{$\displaystyle N $}}
	\renewcommand{\dim}{p} 
	\let\betta\beta
	\renewcommand{\beta}{\RED{\textbf{BETA}}}
	\newcommand{\BadasseEpsilon}{\scalebox{1.4}{$\displaystyle\varepsilon$}}
	\newcommand{\I}{\hspace{0.2mm}\includegraphics[scale=1]{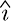}\hspace{0.2mm}}
	\newcommand{\Is}{\scalebox{0.75}{$\displaystyle \I $}} 
	\newcommand{\K}{\hspace{0.2mm}\includegraphics[scale=1]{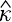}\hspace{0.2mm}}
	\newcommand{\Ks}{\scalebox{0.75}{$\displaystyle \K $}} 
	\newcommand{\M}{\hspace{0.2mm}\includegraphics[scale=1]{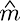}\hspace{0.2mm}}
	\newcommand{\X}{\hspace{0.2mm}\includegraphics[scale=1]{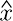}\hspace{0.2mm}}
	\newcommand{\Xs}{\scalebox{0.75}{$\displaystyle \X $}}
	\newcommand{\Z}{\hspace{0.2mm}\includegraphics[scale=1]{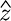}\hspace{0.2mm}}
	\newcommand{\bulk}{\Lambda} 
	\newcommand{\barbulk}{\raisebox{0.384mm}{$\displaystyle \bara{\bulk}$}} 
	\newcommand{\barbulktext}{\hspace{0mm}\scalebox{0.95}{$\displaystyle \raisebox{0.2mm}{$\displaystyle \bara{\bulk}$} $}} 
	\newcommand{\front}{\Gamma} 
	\newcommand{\Hfront}{\Gamma^{\textnormal{\scalebox{0.9}{up}}}} 
	\newcommand{\Lfront}{\Gamma^{\textnormal{\scalebox{0.9}{low}}}} 
	\newcommand{\Road}{\TT^{\dim-1}} 
	\newcommand{\bulkN}{\Lambda_{N}} 
	\newcommand{\frontN}{\Gamma_{N}} 
	\newcommand{\HfrontN}{\Gamma^{\textnormal{\scalebox{0.9}{up}}}_{N}} 
	\newcommand{\LfrontN}{\Gamma^{\textnormal{\scalebox{0.9}{low}}}_{N}} 
	\newcommand{\RoadN}{\TT_{N}^{\dim-1}} 
	\newcommand{\RoadNN}{\LfrontN} 
	\newcommand{\DeltaX}{\Delta_{x}} 
	\newcommand{\DeltaXN}{\Delta_{x}^{\!N}} 
	\newcommand{\nablaX}{\nabla_{\!\!x}}
	\newcommand{\trace}{\text{Tr}}
	\newcommand{\UnitUP}{\mathbb{U}^{\textnormal{\scalebox{0.9}{up}}}}
	\newcommand{\UnitLOW}{\mathbb{U}^{\textnormal{\scalebox{0.9}{low}}}}
	\newcommand{\SpaceState}{S_{N}}
	\newcommand{\SpaceStatee}{S_{\smaller{0.4}{$N$}}} 
	\newcommand{\SpaceStateF}{S^{\textnormal{\scalebox{0.9}{field}}}_{N}}
	\newcommand{\SpaceStateR}{S^{\textnormal{\scalebox{0.9}{road}}}_{N}}
	\newcommand{\LN}{{\Lb}_{N}}
	\newcommand{\LBulkF}{\LL^{\hspace{-1mm}\textnormal{\scalebox{0.9}{field}}}_{N}}
	\newcommand{\LBulkR}{\LL^{\hspace{-1mm}\textnormal{\scalebox{0.9}{road}}}_{N}}
	\newcommand{\LRobin}{L^{\hspace{-0.5mm}\textnormal{\scalebox{0.9}{Rob}}}_{N}}
	\newcommand{\LReaction}{L^{\hspace{-0.5mm}\textnormal{\scalebox{0.9}{reac}}}_{N}}
	\newcommand{\LNeumann}{L^{\hspace{-0.5mm}\textnormal{\scalebox{0.9}{up}}}_{N}}
	\newcommand{\DN}{{\Db}_{N}}
	\newcommand{\DBulkF}{\DD^{\hspace{0mm}\textnormal{\scalebox{0.9}{field}}}_{N}}
	\newcommand{\DBulkR}{\DD^{\hspace{0mm}\textnormal{\scalebox{0.9}{road}}}_{N}}
	\newcommand{\DRobin}{D^{\hspace{-0.08mm}\textnormal{\scalebox{0.9}{Rob}}}_{N}}
	\newcommand{\DReaction}{D^{\hspace{0mm}\textnormal{\scalebox{0.9}{reac}}}_{N}}
	\newcommand{\DNeumann}{D^{\hspace{-0.08mm}\textnormal{\scalebox{0.9}{up}}}_{N}}
	\newcommand{\IN}{{\Ib}_{N}}
	\newcommand{\IBulkF}{\II^{\hspace{0mm}\textnormal{\scalebox{0.9}{field}}}_{N}}
	\newcommand{\IBulkR}{\II^{\hspace{0mm}\textnormal{\scalebox{0.9}{road}}}_{N}}
	\newcommand{\IRobin}{I^{\hspace{-0.08mm}\textnormal{\scalebox{0.9}{Rob}}}_{N}}
	\newcommand{\IReaction}{I^{\hspace{0mm}\textnormal{\scalebox{0.9}{reac}}}_{N}}
	\newcommand{\INeumann}{I^{\hspace{-0.08mm}\textnormal{\scalebox{0.9}{up}}}_{N}}
	\newcommand{\MeasF}{{\Mb}^{\textnormal{\scalebox{0.9}{field}}}}
	\newcommand{\MeasR}{{\Mb}^{\textnormal{\scalebox{0.9}{road}}}}
	\newcommand{\Meas}{{\Mb}}
	\newcommand{\EmpiF}{\pi_{N}^{\textnormal{\scalebox{0.9}{field}}}}
	\newcommand{\EmpiR}{\pi_{N}^{\textnormal{\scalebox{0.9}{road}}}}
	\newcommand{\Empi}{\pi_{N}}
	\newcommand{\EmpiiF}{\pi^{\textnormal{\scalebox{0.9}{field}}}}
	\newcommand{\EmpiiR}{\pi^{\textnormal{\scalebox{0.9}{road}}}}
	\newcommand{\Empii}{\pi}
	\newcommand{\Martin}{{\Ms}_{N}}
	\newcommand{\MartinF}[1]{{\Ms}^{\textnormal{\scalebox{0.9}{field}}}_{N,#1}}
	\newcommand{\MartinR}[1]{{\Ms}^{\textnormal{\scalebox{0.9}{road}}}_{N,#1}}
	\newcommand{\MartinQ}{{\Ns}_{N}}
	\newcommand{\MartinQF}[1]{{\Ns}^{\hspace{0.2mm}\textnormal{\scalebox{0.9}{field}}}_{N,#1}}
	\newcommand{\MartinQR}[1]{{\Ns}^{\hspace{0.2mm}\textnormal{\scalebox{0.9}{road}}}_{N,#1}}
	\newcommand{\MartinB}{{\Bs}_{N}}
	\newcommand{\MartinBF}[1]{{\Bs}^{\hspace{0mm}\textnormal{\scalebox{0.9}{field}}}_{N,#1}}
	\newcommand{\MartinBR}[1]{{\Bs}^{\hspace{0mm}\textnormal{\scalebox{0.9}{road}}}_{N,#1}}
	\newcommand{\Weak}[1]{{\Ws}_{#1}}
	\newcommand{\WeakF}[1]{{\Ws}^{\textnormal{\scalebox{0.9}{field}}}_{#1}}
	\newcommand{\WeakR}[1]{{\Ws}^{\textnormal{\scalebox{0.9}{road}}}_{#1}}
	\newcommand{\ProbaPN}{\mathbb{P}_{N}^{\hspace{0.3mm}\mu_{\scalebox{0.4}{$N$}}}}
	\newcommand{\ProbaQN}{\mathbb{Q}_{N}^{\hspace{0.15mm}\mu_{\scalebox{0.4}{$N$}}}}
	\newcommand{\ProbaQInfty}{\mathbb{Q}_{\smaller{0.6}{$\infty$}}}
	\newcommand{\EsperanceN}[1]{\mathbb{E}_{N}^{\hspace{0.2mm}#1}}
	\newcommand{\EsperanceMuN}{\mathbb{E}_{N}^{\hspace{0.15mm}\mu_{\scalebox{0.4}{$N$}}}}
	\newcommand{\EsperanceNuN}{\mathbb{E}_{N}^{\hspace{0.15mm}\nu_{\scalebox{0.4}{$N$}}}}
	\newcommand{\SolSetONE}{\mathscr{S}_{\scalebox{0.5}{$\textnormal{W1}$}}}
	\newcommand{\SolSetTWO}{\mathscr{S}_{\scalebox{0.5}{$\textnormal{W2}$}}}
	\newcommand{\SolSet}{\mathscr{S}}
	\newcommand{\entropie}{{\Hb}}
	\newcommand{\psb}[2]{\langle #1 , #2 \rangle_{\bulk}} 
	\newcommand{\Psb}[2]{\left \langle #1 , #2 \right \rangle_{\bulk}}
	\newcommand{\Psp}[2]{\left \langle #1 , #2 \right \rangle_{\Lfront}}
	\newcommand{\psr}[2]{\langle #1 , #2 \rangle_{\Road}} 
	\newcommand{\Psr}[2]{\left \langle #1 , #2 \right \rangle_{\Road}}
\title{\textbf{\Large{Bridging bulk and surface: \break An interacting particle system \break towards the field-road diffusion model}}}
\date{}							
\author{%
\normalsize{%
	\href{https://alfaro.perso.math.cnrs.fr/}{Matthieu \textsc{Alfaro}}\textsuperscript{1},
	\href{https://lmrs.univ-rouen.fr/fr/persopage/mustapha-mourragui/}{Mustapha \textsc{Mourragui}}\textsuperscript{2}
	and
	\href{https://www.samueltreton.fr/english/}{Samuel {\sc Tréton}}\textsuperscript{3}
	}
}
\begin{document}

\maketitle
\thispagestyle{TitleStyle} 

\vspace{-7mm}

\begin{abstract}
	We recover the so-called field-road diffusion model as the hydrodynamic limit
	of an interacting particle system. The former consists of two parabolic PDEs posed on two sets of different dimensions \prth{a \glmt{field} and a \glmt{road} in a population dynamics context}, and coupled through exchange terms between the field's boundary and the road.
	The latter stands as a Symmetric Simple Exclusion Process (SSEP): particles evolve on two microscopic lattices following a Markov jump process, with the constraint that each site cannot host more than one particle at the same time.
	The system is in contact with reservoirs that allow to create or remove particles at the boundary sites. The dynamics of these reservoirs are slowed down compared to the diffusive dynamics, to reach the reactions and the boundary conditions awaited at the macroscopic scale.
	This issue of bridging two spaces of different dimensions is, as far as we know, new in the hydrodynamic limit context, and raises perspectives towards future related works.
\end{abstract}

\vspace{8mm}

\noindent{\textbf{Key words:} %
		hydrodynamic limit, %
		interacting particle system, %
		simple exclusion processes,
		slow boundary reservoirs, %
		field-road diffusion model. %
		}

\vspace{3mm}

\noindent{\textbf{MSC2020:} %
		\pdftooltip{60J27}{Continuous-time Markov processes on discrete state spaces},
		\pdftooltip{82C22}{Interacting particle systems},
		\pdftooltip{35K57}{Reaction-diffusion equations},
		\pdftooltip{37N25}{Dynamical systems in biology}.
		}

\vspace{5mm}

\tableofcontents

\newpage

\section{Introduction}\label{S1Intro}

The goal of the present work is to derive the \textit{field-road diffusion model} as the hydrodynamic limit
of an interacting particle system.
The former was introduced by Berestycki, Roquejoffre and Rossi
\cite{BerestyckiInfluence13}
in order to describe spread of diseases or invasive species in presence of networks with accelerated propagation. It consists of two parabolic PDEs posed on two sets of different dimensions (a field and a road in a population dynamics context), and coupled through exchange terms between the field's boundary and the road --- see subsection \ref{ss:pde-model} for details.
To asymptotically retrieve this deterministic model from a stochastic interacting particle system, we consider a
Symmetric Simple Exclusion Process (SSEP)
which evolves both on a finite discrete cylinder (the field) and its lower boundary (the road).
Characterizing the SSEP, the microscopic dynamics is tied with a \textit{simple exclusion rule} that forces each site to host at most one particle at the same time.
To manage in particular the coupling between the field and the road,
the system is in contact with reservoirs that allow to create or remove particles at the boundary sites of the cylinder. The activity of these reservoirs is slowed down compared to the diffusive dynamics, in order to align with the exchange terms awaited at the macroscopic scale.
The originality of our analysis stands in the coupling between two domains of different dimensions, an issue that, as far as we know, has never been considered when recovering diffusive PDEs as the hydrodynamic limit of exclusion processes --- see subsection \ref{ss:exclu-bibli}.

\subsection{The field-road model for fast diffusion channels}\label{ss:pde-model}

Recently, there has been a growing recognition of the importance of \textit{fast diffusion channels} on biological invasions: for instance, an accidental transportation via human activities of some individuals towards northern and eastern France may be the cause of accelerated propagation of the pine processionary moth
\cite{RobinetHumanmediated12}.
In Canada, some GPS data revealed that wolves travel faster along seismic lines (i.e. narrow strips cleared for energy exploration), thus increasing their chances to meet a prey
\cite{McKenzieHow12}.
It is also acknowledged that fast diffusion channels (roads, airlines, etc.) play a central role in the propagation of epidemics. As is well known, the spread of the black plague, which killed about a third of the European population in the 14th century, was favoured by the trade routes, especially the Silk Road, see
\cite{SchmidClimatedriven15}.
More recently, some evidences of the the radiation of the COVID epidemic along highways and transportation infrastructures were found
\cite{GattoSpread20}.

In this context, the \textit{field-road model} introduced by Berestycki, Roquejoffre and Rossi
\cite{BerestyckiInfluence13}
writes as
\begin{equation}\label{syst-BRR}
\left\lbrace \begin{array}{lllll}
	\partial_{t} v = d \Delta v + f(v), &\quad
	& t>0, \, & x \in \mathbb{R}^{\dim-1}, \, & y>0, \\[1.4mm]
	- d\partial_{y} v|_{y=0} = \alpha u - \betta v|_{y=0}, &\quad
	& t>0, \, & x \in \mathbb{R}^{\dim-1}, \, & \\[1.4mm]
	\partial_{t} u = D \Delta u + \betta v|_{y=0} - \alpha u, &\quad
	& t>0, \, & x \in \mathbb{R}^{\dim-1}. &
\end{array} \right .
\end{equation}
The mathematical problem then amounts to describing survival and propagation in a non-standard physical space: the geographical domain consists in the half-space \prth{the \glmt{field}} $x\in \mathbb{R}^{\dim-1}$, $y>0$, bordered by the hyperplane \prth{the \glmt{road}} $x\in \mathbb{R}^{\dim-1}$, $y=0$. In the field, individuals diffuse with coefficient $d>0$ and their density is given  by $v=v\prth{t,x,y}$. In particular $\Delta v$ has to be understood as $\DeltaX v+\partial_{yy}v$. On the road, individuals typically diffuse faster \prth{$D>d$} and their density is given by $u=u\prth{t,x}$. In particular $\Delta u$ has to be understood as $\DeltaX u$. The exchanges of population between the road and the field are described by the second equation in system \eqref{syst-BRR}, where $\alpha>0$ and $\betta >0$. These boundary conditions, and the zeroth-order term on the road, link the field and the road equations and are the core of the model \prth{see also the \textit{volume-surface} systems \cite{CussedduCoupled19}, \cite{FellnerWellposedness18}, \cite{EggerAnalysis18} in the context of chemical processes or asymmetric stem cell division}.
 
In a series of works \cite{BerestyckiInfluence13, BerestyckiFisher13, BerestyckiShape16, BerestyckiTravelling16},  Berestycki, Roquejoffre and Rossi studied the field-road system with $\dim=2$ and $f$ a Fisher-KPP nonlinearity. They shed light on an \textit{acceleration phenomenon}: when $D>2d$, the road  enhances the global diffusion and the spreading speed exceeds the standard Fisher-KPP invasion speed. This new feature has stimulated many works and, since then, many related problems taking into account heterogeneities, more complex geometries, nonlocal diffusions, etc. have been studied
\cite{BerestyckiSpeedup13, BerestyckiEffect15}, 
\cite{GilettiKPP15},
\cite{PauthierRoadfield15A, PauthierUniform15, PauthierInfluence15},
\cite{TelliniPropagation16},
\cite{RossiEffect17},
\cite{DucasseInfluence18},
\cite{BerestyckiGeneralized20, BerestyckiInfluence20},
\cite{AffiliFisherKPP20A, ZhangSpreading21},
\cite{BogoselPropagation21}.

Very recently, the \textit{purely diffusive} field-road system --- obtained by letting $f\equiv 0$ in \eqref{syst-BRR} --- has attracted some attention. Hence, an explicit expression for both the fundamental solution and the solution to the associated Cauchy problem, and a sharp (possibly up to a logarithmic term) decay rate of the $L^{\infty}$ norm of the solution were obtained in \cite{AlfaroFieldroad23}. In a bounded domain, the long time convergence was studied \cite{AlfaroLong23A} through entropy methods, in both the continuous and the discrete (finite volume scheme) settings. 

\medskip

From now on, we thus consider the purely diffusive field-road model. By using the rescaling 
$$
\tilde{v}\prth{t,x,y} = v\Prth{\frac{t}{\lambda^2},\frac{x}{\lambda},\frac{y}{\lambda}},
\quad
\tilde{u}\prth{t,x} = \lambda u\Prth{\frac{t}{\lambda^2},\frac{x}{\lambda}},
\quad
\lambda = \frac{\alpha}{\betta},
$$
we see that it is enough to consider the case $\alpha=\betta$. Also, for $\dim \geq 2$, we work in the $\dim$-dimensional open finite cylinder
$$
\bulk : = \TT^{\dim-1} \times \intervalleoo{0}{1},
$$
where $\TT$ is the one-dimensional torus $\mathbb{R} / \mathbb{Z}$. 
For $v$, we impose the zero Neumann boundary conditions on the upper boundary $\TT^{\dim-1} \times \acco{y=1}$.
This insures the conservation of the total mass, {namely $\int_{\TT^{\dim-1} \times \intervalleoo{0}{1}} v(t,x,y)dx dy+\int_{\TT^{\dim-1}} u(t,x)dx$}, therefore modeling a purely diffusive process within a closed environment.
Denoting $n$ the unit outward normal vector to $\partial \bulk$, the considered system is thus
\begin{equation}\label{syst}
\left\lbrace \begin{array}{lllll}
	\partial_{t} v = d \Delta v, &\quad
	& t>0, \, & x \in \TT^{\dim-1}, \, & y\in\intervalleoo{0}{1}, \\[1.76mm]
	- d\partial_{y} v|_{y=0} = \alpha u - \alpha v|_{y=0}, &\quad
	& t>0, \, & x \in \TT^{\dim-1}, \, & y=0, \\[1.76mm]
	\partial_{t} u = D \Delta u + \alpha v|_{y=0} - \alpha u, &\quad
	& t>0, \, & x \in \TT^{\dim-1}, & \\[1.76mm]
	\frac{\partial v}{\partial n} = 0, &\quad
	& t>0, \, & x \in \TT^{\dim-1}, \, & y=1,
\end{array} \right .
\end{equation}
supplemented with an initial condition
\begin{equation}\label{data}
	\left\lbrace \begin{array}{lllll}
		v|_{t=0} = v_{0}\in L^{\infty}\prth{\bulk} \cap \intervalleff{0}{1}^{\bulk}, &
		& & x \in \TT^{\dim-1}, \, & y\in\intervalleoo{0}{1}, \\[1.76mm]
		u|_{t=0} = u_{0}\in L^{\infty}\prth{\Road} \cap \intervalleff{0}{1}^{\Road}, &
		& & x \in \TT^{\dim-1}.&
	\end{array} \right .
\end{equation}
Note that, given the linear nature of the system \eqref{syst}, the use of initial data bounded by $1$ is a simplification that does not compromise the generality of our analysis. This choice is actually imposed by the exclusion rule, see Remark \ref{REM_why_data_in_01}.

\newpage

\subsection{Interacting Particle Systems and Simple Exclusion Processes}\label{ss:exclu-bibli}

The field of interacting particle systems is a branch of probability theory that emerged in the early 1970s, focusing on Markov processes inspired by models from statistical physics and biology.
Analysis occurs at both the microscopic level of particle dynamics and by scaling from microscopic to macroscopic levels. This involves space and time renormalization procedures to derive hydrodynamic limits, represented by PDEs that describe key model quantities such as particle densities.

Introduced by Frank Spitzer in \cite{SpitzerInteraction70},
the exclusion process are interacting particle systems from which can be recovered a large variety of diffusive systems driven out of equilibrium, see the pioneering works
\cite{SpohnLong83},
\cite{LiggettInteracting05, LiggettStochastic99}.
We refer to the seminal book
\cite{KipnisScaling99}
for the complete derivation of the Heat equation on a torus from a nearest-neighbor
exclusion process which consists in a collection of continuous-time random
walks evolving on a lattice (see below for details).

When boundaries are considered, the system is in contact with some so-called reservoirs, see
\cite{LandimStationary08},
\cite{DeMasiCurrent11, DeMasiTruncated12, DeMasiNonequilibrium12}
where Dirichlet boundary conditions are recovered. Recently, a lot of effort has been put in understanding the case of exclusion process whose dynamics is perturbed by the presence of a slow bond
\cite{BodineauDiffusive10}, \cite{FrancoHydrodynamical13}, or by slow boundary effects
\cite{FrancoHydrodynamical13, FrancoPhase13, FrancoPhase15},
\cite{BaldassoExclusion17},
\cite{KuochBoundary17},
\cite{LandimHydrostatics18},
\cite{BernardinSlow19}.

Let us comment more precisely on some of the outcomes obtained in
\cite{BaldassoExclusion17}.
The authors specifically examine the hydrodynamic behavior of a symmetric simple exclusion process with \textit{slow boundary}.
This means that, at the boundary sites, particles can be born or die at slower rates (depending on the scaling parameter $N$) than events occurring in the bulk.
The hydrodynamic limit is then the Heat equation, supplemented with Dirichlet, Robin, or Neumann boundary conditions, depending on the scaling of the boundary rates.

\medskip

The present work stands at the crossroads of this framework, the reaction-diffusion issues and epidemiology/population dynamics modeling.
With that respect, let us mention the very recent work
\cite{MourraguiHydrodynamic23}
where a reaction-diffusion system modeling the sterile insect technique is retrieved. The very originality of our work stands in the fact that the considered system is posed on sets of different dimensions a case which, as far as we know, is considered for the first time in the interacting particle system literature.

\section{Notations and main result}\label{S2NotaRes}

All the notations used in this paper are gathered in the \hyperref[TBL_of_NOT]{Table of Notations} at the end of this document.

\subsection{Sets and related notations}\label{ss:sets}
As announced above, in the macroscopic setting, we work 
in the $p$-dimensional open finite cylinder
$$
\bulk : = \TT^{\dim-1} \times (0,1),
$$
where $\TT$ designates the one-dimensional torus $\mathbb{R} / \mathbb{Z}$. The boundary of the domain is denoted 
$$
\front : = \partial \bulk =
\acco{  \prth{x,y} \in \barbulk \mid
	y = 0 \text{ or } y = 1
    } =
\TT^{\dim-1} \times \acco{0,1},
$$
with $\barbulk$ the closure of $\bulk$. We partition $\front$ into two parts
representing the lower and upper boundaries of the cylinder:
$$
\Lfront : = \TT^{\dim-1}\times \acco{0}
\qquad
\text{and}
\qquad
\Hfront : = \TT^{\dim-1}\times \acco{1}.
$$

At the microscopic level, given an integer $N\geq 2$, we define $\bulkN$ and $\frontN$
as the corresponding discrete microscopic sets.
Specifically, by letting
$\TT_{N} : = \mathbb{Z}/N\mathbb{Z}$
the discrete one-dimensional torus of length $N$,
and using the notation $\intervalleE{a}{b} : = \intervalleff{a}{b}\cap\mathbb{Z}$ for any $a,b\in \mathbb{R}$,
$$\bulkN : = \TT_{N}^{\dim-1} \times \intervalleE{1}{N-1}$$
represents the cylinder in $\mathbb{Z}^{\dim}$ of height $N-1$ and basis $\TT_{N}^{\dim-1}$,
its boundary being
$$
\frontN : =
\acco{\prth{i,j} \in  \bulkN \mid j = 1 \text{ or } N-1} =
\TT_{N}^{\dim-1}\times \acco{1,N-1}.
$$
Similarly, $\frontN =\LfrontN\cup \HfrontN$ with 
$$
\LfrontN : = \TT_{N}^{\dim-1} \times \acco{1}
\qquad
\text{and}
\qquad
\HfrontN : = \TT_{N}^{\dim-1} \times \acco{N-1}.
$$

The elements of
$\barbulk$
are represented by
$$
\X=\prth{x,y}
\qquad
\text{and}
\qquad
\Z = \prth{z, \omega},
$$
with 
$x,z\in\Road$ and $y,\omega\in\intervalleoo{0}{1}$,
while those of
$\bulkN$
are symbolized by the letters
$$
\I=\prth{i,j}
\qquad
\text{and}
\qquad
\K=\prth{k,\ell},
$$
with $i,k\in\RoadN$ and $j,\ell\in\intervalleE{1}{N-1}$.

\medskip

\subsection{Description of the microscopic model}\label{SS21Descrip}

We consider the evolution of two kinds of interacting particles on the lattices 
$\bulkN$ (the microscopic field) and $\RoadNN$ (the microscopic road).
The associated stochastic dynamics is described
by the temporal evolution of a Markov process denoted by 
$\prth{\eta_{t},\xi_{t}}_{t \in \intervalleff{0}{T}}$, where $T>0$ is a given temporal horizon.
Particles tied to the dynamics of $\eta$ (the \glmt{field-particles}) evolve in the whole microscopic field
$\bulkN$, while the particles corresponding to the dynamics of $\xi$ (the \glmt{road-particles}) evolve solely on the microscopic road $\RoadNN$, that stands as the lower frontier of the microscopic field%
\footnote{It is important to note that the microscopic road $\RoadNN$ is actually embedded in $\mathbb{Z}^{\dim-1}$. For the purpose of simplifying notations and facilitating understanding, we often make an identification between $(i,1)$ and $i$, establishing a one-to-one correspondence between the lower boundary of the microscopic field and the $(\dim -1)$-dimensional torus. In line with this simplification, we will use $\xi\prth{i}$ and $\eta\prth{i}$ to denote what are, in fact, $\xi\prth{i,1}$ and $\eta\prth{i,1}$. This is a deliberate choice to streamline the expressions without compromising the accuracy of the mathematical representations involved. Similarly, on the upper microscopic boundary $\HfrontN$, we may write $\eta\prth{i}$ to represent $\eta\prth{i,N-1}$, provided this does not lead to any ambiguity.}.
Both types of particles follow an exclusion rule in its respective environment:
each site $\I = \prth{i,j}\in\bulkN$ can host at most one field-particle, and similarly, each site $i\in\RoadNN$ can host at most one road-particle.
Note in particular that at a site $i \in \RoadNN$, there may be a field-particle \textit{and} a road-particle.

\renewcommand{\gap}{-0.2mm}
\renewcommand{\gapp}{0.4mm}
\renewcommand{\gappp}{-3.5mm}
The overall dynamics emerge from the superposition of several independent ones, which are individually specified below and collectively depicted in \hyperref[FigDynamics]{Figure \ref{FigDynamics}}:\\[\gap]

\itembulletGREEN\hspace*{\gapp}
\textit{Diffusion in the field}.
Within $\bulkN$, the field-particles follow a simple exclusion process and jump at exponential times.
The dynamics of this process is as follows:
a particle located at site $\I$ awaits an exponential time after which it jumps to a neighboring site $\K$ with speeded rate $N^{2}d$, for some fixed $d>0$.
However, if the site $\K$ is already occupied, the jump is prevented in accordance with the exclusion rule.\\[\gap]

\itembulletBLUE\hspace*{\gapp}
\textit{Diffusion on the road}.
Similarly, the road-particles follow a simple exclusion process on $\RoadN$:
a particle positioned at site $i$ awaits an exponential time
after which it jumps to a neighboring site $k$ with speeded rate $N^{2}D$, for some fixed $D>0$.
However, if the site $k$ is already occupied, the jump is inhibited.\\[\gap]

\itembulletRED\hspace*{\gapp}
\textit{Reservoir at the upper field's boundary}. The dynamics defined on the upper boundary $\HfrontN$ act as reservoirs for the field-particles that are much slower compared to the rate of jumps in the bulk.
Fix a constant $0\leq b\leq 1$,
for each site $\I \in \HfrontN$,
the following events occur, according to exponential times that are independent of all others:
\begin{itemize}[label=$\thickminus$]
	\item In the absence of a particle, a new one is generated with rate $b$.
	\item Conversely, if a particle is present, it is eliminated with rate $1-b$.\\[\gappp]
\end{itemize}

\itembulletLILA\itembulletBROWN\hspace*{\gapp}
\textit{Exchange dynamics between the lower field's boundary and the road}. We now describe the interacting behavior between the road-particles and the field-particles at the lower boundary of the microscopic field $\LfrontN$.
Fix $\alpha>0$, for each site $\I = \prth{i,1} \in \LfrontN$, according to exponential times,
the following scenarios may occur:
\begin{itemize}[label=$\thickminus$]
	\item If a road-particle is present and no field-particle exists,
	then a field-particle is generated at site $\I$ with speeded rate $N \alpha$ \HEART.
	Independently, the road-particle is eliminated with rate $\alpha$ \CLUB.
	\item Conversely, if a field-particle is present without a road-particle, then the field-particle is eliminated with speeded rate $N\alpha$ \DIAMOND.
	Independently, a road-particle is generated with rate $\alpha$ \SPADE.
\end{itemize}

The configuration space is given by
$$
\SpaceState : =
\underbrace{\acco{0,1}^{\bulkN}}_{=:\,\SpaceStateF}
\times
\underbrace{\acco{0,1}^{\LfrontN}}_{=:\,\SpaceStateR}
$$
which we endow with the product topology.
The elements of  
$\SpaceState$, referred to as configurations, are denoted by $\prth{\eta,\xi}$. 
The first marginal $\eta$ represents a configuration within the state space $\SpaceStateF$.
To be more specific, in a given configuration $\eta$, for any $\I$ in $\bulkN$, $\eta\prth{\I} = 1$ means the site $\I$ is occupied.
Conversely, $\eta\prth{\I} = 0$ signifies that the site $\I$ is empty.
Similarly, the second marginal $\xi$ stands for a configuration within the state space $\SpaceStateR$,
and for any $i$ in $\RoadNN$, the value $\xi\prth{i}$ indicates
the occupancy status of particle at site $i$ in a given configuration $\xi$.

For any configuration $\eta$ in $\SpaceStateF$ (resp. $\xi$ in $\SpaceStateR$),
and any sites $\I,\K$ in $\bulkN$ (resp. $i,k$ in $\RoadNN$),
let $\eta^{\Is,\Ks}$ (resp. $\xi^{i,k}$) be the configuration obtained from $\eta$ (resp. $\xi$) by switching the values at $\I$ and $\K$ (resp. $i$ and $k$), namely
\begin{equation*}
\prth{\eta^{\Is,\Ks}} \prth{\M} = 
\begin{cases}
	\eta\prth{\I} & \text{if }\M=\K,\\
	\eta\prth{\K} & \text{if }\M=\I,\\
	\eta\prth{\M} & \text{otherwise,}
\end{cases}
\qquad
\quad
\Prth{\text{resp.}
\quad
\prth{\xi^{i,k}} \prth{m} = 
\begin{cases}
	\xi(i) & \text{if }m=k,\\
	\xi(k) & \text{if }m=i,\\
	\xi(m) & \text{otherwise}
\end{cases}}.
\end{equation*}
For the sites $\I\in\frontN$ (resp. $i\in\RoadNN$), let $\eta^{\Is}$ (resp. $\xi^{i}$) be the configuration obtained from $\eta$ (resp. $\xi$) by flipping the occupation number at site $\I$ (resp. $i$), namely
\begin{equation*}
\prth{\eta^{\Is}}\prth{\M} = 
\begin{cases}
	1-\eta\prth{\M}  & \text{if } \M=\I,\\
	\eta\prth{\M}    & \text{otherwise,}
\end{cases}
\qquad
\quad
\Prth{\text{resp.}
\quad
\prth{\xi^{i}} \prth{m} = 
\begin{cases}
	1-\xi\prth{m}  & \text{if } \prth{m}=i,\\
	\xi\prth{m}    & \text{otherwise}
\end{cases}}.
\end{equation*}

\refstepcounter{FIGURE}\label{FigDynamics}
\begin{center}
\includegraphics[scale=1]{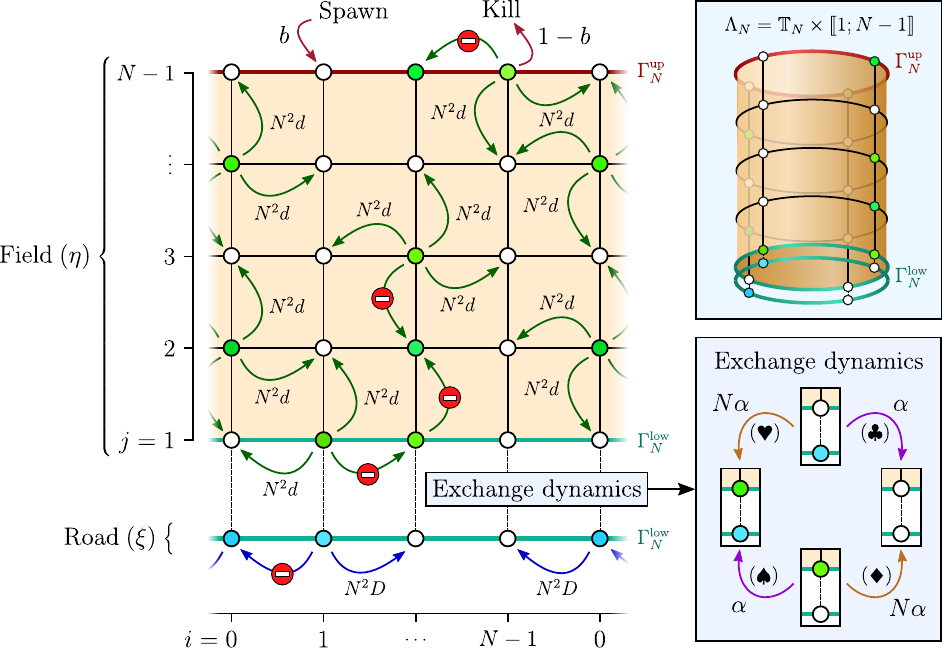}\\[5mm]
\begin{minipage}[c]{145mm}
\textsl{\textbf{\footnotesize{Figure \theFIGURE ~ --- The microscopic dynamics in dimension $\mathbf{\dim=2}$.}}
\begin{footnotesize}
In the field, particles are represented by green dots
\hspace{-1.4mm} \etapoint \hspace{0.05mm}
and jump towards one of their adjacent sites
at exponential times with mean frequency $N^{2}d$ --- \glmt{with rate $N^{2}d$} for short.
Moves to already occupied sites are prohibited by the exclusion rule, and are indicated by the symbol
\hspace{+0.0mm}\SensInterdit.
Similarly, particles on the road are depicted by the blue dots \hspace{0.1mm}\xipoint\hspace{0.05mm} and jump to their neighboring unoccupied sites with rate $N^{2}D$.
At the upper boundary of the field
\raisebox{0.08mm}{$\displaystyle \scalebox{0.95}{$\displaystyle \textcolor{Neumann}{\HfrontN}$}$},
particle emerge in empty sites with rate $b$ and are removed with rate $1-b$.
The interactions at the lower boundary of the field
\raisebox{0.08mm}{$\displaystyle \scalebox{0.95}{$\displaystyle \textcolor{Exchange}{\LfrontN}$}$}\hspace{-0.8mm}
allow the coupling between the field and the road and play a central role in the model.
These are detailed in the \glmt{Echange dynamics} panel on the right-hand-side of the figure.
For clarity and to facilitate understanding, $\LfrontN$ is represented twice to distinguish between the particles at the lower boundary of the field and those on the road. Notice also that not all possible jumps are represented.
\end{footnotesize}
}
\end{minipage}
\end{center}

\medskip

Fix $\alpha, d,D \in (0,\infty)$, and $0 \leq b \leq 1$. The generator of the microscopic dynamics,
$\LN : \mathbb{R}^{\SpaceState}\to\mathbb{R}^{\SpaceState}$,
is split as follows:
\begin{equation}\label{GenGathered}
\LN =
\underbrace{ N^{2} \; \LBulkF    }_{\textcolor{BulkField}{\text{field diffusion}}} +
\overbrace{  N^{2} \; \LBulkR    }^{\textcolor{BulkRoad}{\text{road diffusion}}} +
\underbrace{ N     \, \LRobin    }_{\textcolor{RobinExchangeDarker}{\substack{\text{lower Robin}\\ \text{condition}}}} +
\overbrace{           \LReaction }^{\textcolor{RoadExchange}{\substack{\text{road}\\ \text{reaction}}}} +
\underbrace{          \LNeumann  }_{\textcolor{Neumann}{\substack{\text{upper}\\ \text{reservoir}}}},
\end{equation}
where, for any $f:\SpaceState\to\mathbb{R}$
and any
$\prth{\eta,\xi}\in \SpaceState$,
\renewcommand{\gap}{4mm}
\begin{equation}\label{GenBulkField}
\Prth{\LBulkF f} \prth{\eta,\xi} =\frac{d}{2}
 \sum\limits_{\substack{  \Is,\Ks \in \bulkN  \\  \verti{\Is-\Ks}=1}  }
\Croch{f\prth{\eta^{\Is,\Ks},\xi} - f\prth{\eta,\xi}},
\end{equation}
\begin{equation}\label{GenBulkRoad}
\Prth{\LBulkR f} \prth{\eta,\xi} =
\frac{D}{2}\sum\limits_{\substack{  i,k\in\RoadNN  \\  \verti{i-k}=1}}
\Croch{f\prth{\eta,\xi^{i,k}} - f\prth{\eta,\xi}}, 
\end{equation}
\begin{equation}\label{GenRobinBC}
	\Prth{\LRobin f} \prth{\eta,\xi} =
	\alpha
	\sum\limits_{i\in \RoadNN}
	\prtH{
		\eta\prth{i}-\xi\prth{i}
	}^{2}
	\Croch{f(\eta^{i},\xi) - f\prth{\eta,\xi}},
\end{equation}
\begin{equation}\label{GenRoadReac}
	\Prth{\LReaction f} \prth{\eta,\xi} =
	\alpha
	\sum\limits_{i\in \RoadNN}
	\prtH{
		\eta\prth{i}-\xi\prth{i}
	}^{2}
	\Croch{f\prth{\eta,\xi^{i}} - f\prth{\eta,\xi}},
\end{equation}
\begin{equation}\label{GenReservoir}
	\Prth{\LNeumann f} \prth{\eta,\xi} =
	\sum\limits_{i\in\HfrontN}
	\prtH{
		b\prth{1-\eta\prth{i}} +
		\prth{1-b}\eta\prth{i}
	}
	\Croch{f\prth{\eta^{i},\xi} - f\prth{\eta,\xi}}.
\end{equation}
In \eqref{GenBulkField} and \eqref{GenBulkRoad},
we use
$\verti{\point}$ to denote the infinity norm in $\mathbb{R}^{\dim}$, that is
\begin{equation*}
	\verti{\I} =
	\max \prth{\verti{i_{1}},\,\cdots,\verti{i_{\dim-1}}, \verti{j}},
	\qquad
	\forall \, \I\in\bulkN.
\end{equation*}
Also, we highlight that the flip rate
$\alpha\prth{\eta\prth{i}-\xi\prth{i}}^{2}$ in \eqref{GenRobinBC} and \eqref{GenRoadReac} arises from equality
$$
 \prth{1-\eta\prth{i}} \, \xi\prth{i} + \eta\prth{i} \, \prth{1-\xi\prth{i}}=\prth{\eta\prth{i}-\xi\prth{i}}^{2},
$$
which holds since both $\eta(i)$ and $\xi(i)$ belong to $\acco{0,1}$.

The parts \eqref{GenBulkField}, \eqref{GenBulkRoad} and \eqref{GenReservoir} are rather classical, see \cite{KipnisScaling99}, \cite{BaldassoExclusion17}, \cite{MourraguiHydrodynamic23} for instance.
On the other hand, the parts \eqref{GenRobinBC} and \eqref{GenRoadReac} are original, their role being to catch the exchange condition in the field-road model. We refer to \cite{FrancoPhase13} and \cite{FrancoHydrodynamic19} for related issues.

For a given time horizon $T>0$, we denote
$\prth{\eta_{t},\xi_{t}}_{t\in\intervalleff{0}{T}}$
the Markov process with state space $\SpaceState$ associated to the generator $\LN$.
We define $D\prth{\intervalleff{0}{T};\SpaceState}$ as the path space for càdlàg time trajectories valued in $\SpaceState$.
Given a measure $\mu_{N}$ on $\SpaceState$,
we denote by
$\ProbaPN$
the probability measure on $D\prth{\intervalleff{0}{T};\SpaceState}$
induced by $\mu_{N}$ and $\prth{\eta_{t},\xi_{t}}_{t\in\intervalleff{0}{T}}$,
and we write $\EsperanceMuN$ the expectation with respect to
$\ProbaPN$.
Moreover, the notation 
$\bk{\point , \point}{\hspace{-0.1mm}\mu_{{\scalebox{0.4}{$N$}}}}$
refers to the scalar product on $L^{2}_{\mu_{{\scalebox{0.4}{$N$}}}}\prth{\SpaceState}$.

\subsection{Functional spaces and macroscopic equations}\label{SS22FuncMacr}
For any integers $n$ and $m$, we define the functional spaces
$$
{\Cb}^{n,m}\prth{\intervalleff{0}{T}\times\barbulk}
\qquad
\text{and}
\qquad
{\Cb}^{n,m}\prth{\intervalleff{0}{T}\times\Road}
$$
which respectively consist of functions
$$
G = G\prth{t,\X} : \intervalleff{0}{T} \times \barbulk \to \mathbb{R}
\qquad
\text{and}
\qquad
H = H\prth{t,x} : \intervalleff{0}{T} \times \Road \to \mathbb{R},
$$
that possess $n$ continuous derivatives with respect to the time variable on $\intervalleff{0}{T}$, and $m$ continuous derivatives with respect to the spatial variable on $\barbulktext$ and $\Road$ respectively. 
We also introduce the subset
${\Cb}^{n,m}_{c}\prth{\intervalleff{0}{T}\times\barbulktext}$
of functions with compact support in $\intervalleff{0}{T}\times\bulk$ within
${\Cb}^{n,m}\prth{\intervalleff{0}{T} \times \barbulktext}$.
Similarly, we denote
${\Cb}^{m}\prth{\barbulktext}$
and
${\Cb}^{m}\prth{\Road}$
the sets of functions with $m$ continuous derivatives on $\barbulktext$ and $\Road$ respectively.

In the whole document, if
$\varphi$ is a function that depends both on the time and the spatial variables,
the abbreviation $\varphi\prth{t}$ obviously stands for $\varphi\prth{t,\point}$.

In the sequel, the notations
$\psb{\point}{\point}$
and
$\psr{\point}{\point}$
respectively represent the $L^{2}\prth{\bulk}$ and $L^{2}\prth{\Road}$ inner products.

We consider the Sobolev space
${\Hb}^{1}\prth{\bulk}$
as the set of functions $g$ in $L^{2}\prth{{\bulk}}$
such that for every $q$ ranging from $1$ to $\dim$,
there is an element $\partial_{e_{q}} g$
in $L^{2}\prth{\bulk}$,
for which we have 
$$
\bk{\partial_{e_{q}}G , g}{\bulk} =
- \bk{G , \partial_{e_{q}} g}{\bulk},
\qquad
\forall G \in {\Cb}_{c}^{\infty}\prth{\bulk},
$$
where $\partial_{e_{q}}\varphi$ denotes the derivative with respect to the $q^{\text{th}}$ canonical vector of $\mathbb{R}^{\dim}$.
We then define the norms on the Sobolev space
${\Hb}^{1}\prth{\bulk}$
by
$$
\vertii{g}{{\Hb}^{1}\prth{\bulk}}
	= \bigg(
		\vertii{g}{L^{2}\prth{\bulk}}^{2} +
		\sum\limits_{q=1}^{\dim}\vertii{\partial_{e_{q}}g}{L^{2}\prth{\bulk}}^{2}
	   \bigg)^{1/2}.
$$

With respect to a given Banach space $B$ (most of the time, $B = {\Hb}^{1}\prth{\bulk}$ or $B = L^{2}\prth{\Road}$), we define
$L^{2}\prth{0,T;B}$
as the function space composed of maps
$\varphi : \intervalleff{0}{T} \to B $ satisfying
$$
\int_{0}^{T} \vertii{\varphi\prth{t}}{B}^{2} \; dt < +\infty.
$$

In order to define the value of an element $g$ in ${\Hb}^{1}\prth{\bulk}$ at the boundaries $\front = \Hfront \cup \Lfront$,
we need to introduce the notion of trace.
The trace operator on the space ${\Hb}^{1}\prth{\bulk}$ can be defined as the bounded linear operator
$$
\tr : {\Hb}^{1}\prth{\bulk} \to L^{2}\prth{\front}
$$
such that $\tr$ extends the classical trace, that is
$$
\tr\prth{g}=g|_{\front},
\qquad
\forall g \in {\Hb}^{1}\prth{\bulk} \cap {\Cb}^{0} \prth{\barbulk}.
$$
We refer to
\cite[Part II, Section 5]{EvansPartial10} 
for a detailed survey of the trace operator.
In the sequel, for any point
$\X = \prth{x,y}$ in $\front$
and any function
$g$ belonging to the space $L^{2}\prth{0,T;{\Hb}^{1}\prth{\bulk}}$,
all the expressions
$g\prth{t,\X}$, $g\prth{t,x}$ or $g\prth{t,\point}|_{\front}\prth{x}$ represent the trace operator applied to
$g\prth{t,\point}$
at position $\X\in \front$.
Additionally, observe that
$\partial_{y}g\prth{t,\point}$
\prth{resp. $-\partial_{y}g\prth{t,\point}$}
stands for the normal derivative of the function $g\prth{t,\point}$ on the boundary
$\Hfront$ \prth{resp. $\Lfront$}.

\medskip

We are now in a position to introduce our notion of weak solution for the field-road problem \eqref{syst}-\eqref{data}.

\begin{definition}[Solving the field-road system]\label{def:weak-sol} Fix a time horizon $T>0$. 
For any measurable initial data
$v_{0} : \bulk \to \intervalleff{0}{1}$
and 
$u_{0} : \Road \to \intervalleff{0}{1}$,
a couple of functions
$\prth{v,u}$
is said to be a weak solution to the initial value problem \eqref{syst}-\eqref{data} as soon as the following two conditions
\textnormal{\ref{ItWeak1}-\ref{ItWeak2}}
hold true:

\medskip

\begin{itemize}[leftmargin=14mm]
\refstepcounter{ITweak}\label{ItWeak1}
\item [\textnormal{\textbf{(W1)}}]
	$v \in L^{2} \prth{0,T ; {\Hb}^{1}\prth{\bulk}}$
	and
	 $u \in L^{2} \prth{0,T ; L^{2}\prth{\Road}}$.
	\\[-2mm] 
	\refstepcounter{ITweak}\label{ItWeak2}
\item [\textnormal{\textbf{(W2)}}]
	For any
	$G \in {\Cb}^{1,2} \prth{\intervalleff{0}{T} \times \barbulk}$,
	any
	$H \in {\Cb}^{1,2} \prth{\intervalleff{0}{T} \times \Road}$, any $t\in \intervalleff{0}{T}$, there holds
\begin{equation}\label{EqWeakField}
	\hspace{-15mm}
	\scalebox{0.95}{$\displaystyle 	\begin{aligned}
				\Psb{v\prth{t}}{G\prth{t}} - \Psb{v_{0}}{G\prth{0}}
			&=  \int_{0}^{t} 
				\Psb{v\prth{s}}{\partial_{s}G\prth{s}} ds
				+ \int_{0}^{t}\Psb{v\prth{s}}{d \Delta G\prth{s}}ds\\[3mm]
			&\hspace{-26.5mm}
				- \int_{0}^{t} \Psr{v|_{y=1}\prth{s}}{d\partial_{y}G|_{y=1}\prth{s}}ds
				+ \int_{0}^{t} \Psr{v|_{y=0}\prth{s}}{d\partial_{y}G|_{y=0}\prth{s}}ds\\[3mm]
			&\hspace{35mm}
				+ \int_{0}^{t} \alpha \Psr{u\prth{s} - v|_{y=0}\prth{s}}{G|_{y=0}\prth{s}}
			\hspace{0mm} ds,	 
		\end{aligned} $}
	\end{equation}
\hspace{-12mm}together with\\[0mm]
\begin{equation}\label{EqWeakRoad}
\hspace{-15mm}
\scalebox{0.95}{$\displaystyle 	\begin{aligned}
			\Psr{u\prth{t}}{H\prth{t}} - \Psr{u_{0}}{H\prth{0}}
		&= \int_{0}^{t} 
			\Psr{u\prth{s}}{\partial_{s}H\prth{s}} 
			+ \int_{0}^{t} \Psr{u\prth{s}}{D\DeltaX H\prth{s}} ds\\[3mm]
		&\hspace{25mm}
			+ \int_{0}^{t} \alpha \Psr{v|_{y=0}\prth{s} - u\prth{s}}{H\prth{s}}
		\hspace{0mm} ds.
	\end{aligned} $}
\end{equation}
\end{itemize}
\end{definition}

\subsection{Main Results}\label{SS23MainRes}

We use the notations
$\MeasF$ and $\MeasR$ to denote the sets of positive measures on $\bulk$ and $\Lfront$
whose total mass is bounded by $1$,
and we define the space $\Meas$ as the Cartesian product
$\MeasF \times \MeasR$.
We denote the integrals of functions against measures
indifferently for $\bulk$ or $\Lfront$,
namely, for any measures
$\mu \in \MeasF$ and $\nu \in \MeasR$,
and any functions $G \in L^{1}_{\mu}\prth{\bulk}$ and $H \in L^{1}_{\mu}\prth{\Road}$,
$$
\bk{\mu,G}{} =
\int_{\bulk} G\prth{\X} \, \mu\prth{d\X}
\qquad
\text{and}
\qquad
\bk{\mu,H}{} =
\int_{\Road} H\prth{x} \, \nu\prth{dx}.
$$
We endow $\Meas$, $\MeasF$ and $\MeasR$ with a topology induced by the weak convergence of measures.
It is worth mentioning that all these spaces are compact and Polish.

The empirical measure of a configuration
$\prth{\eta,\xi}\in\SpaceState$
is defined as
$\Empi\prth{\eta,\xi}$,
where the map
$\Empi : \SpaceState\to\Meas$ is given by
\begin{equation}\label{EqEmpMeas}
\Empi\prth{\eta,\xi} : =
\Bigg(
	\underbrace{\frac{1}{N^{\dim}}\sum\limits_{\Is\in \bulkN}\eta\prth{\I}\delta_{\Is/N}}_{=:\, \EmpiF\prth{\eta}} ,
	\quad
	\underbrace{\frac{1}{N^{\dim-1}}\sum\limits_{i\in \RoadNN}\xi\prth{i}\delta_{i/N}}_{=:\, \EmpiR\prth{\xi}}
\Bigg).
\end{equation}
In \eqref{EqEmpMeas}, the notation $\delta_{\Is/N}$ (resp. $\delta_{i/N}$) stands for the Dirac mass at place $\I/N$ (resp. $i/N$).
For any configuration $\prth{\eta,\xi}\in \SpaceState$,
any
$G \in {\Cb}^{0}\prth{\barbulk}$
and any
$H \in {\Cb}^{0}\prth{\Road}$,
we denote
$$
\bk{\Empi \prth{\eta,\xi},\croch{G,H}}{} : =
\bk{\EmpiF\prth{\eta} , G}{} +
\bk{\EmpiR\prth{\xi} , H}{}.
$$

We introduce
$
\prth{\pi_{N}\prth{t}}_{t\in\intervalleff{0}{T}} : =
\prth{\Empi\prth{\eta_{t},\xi_{t}}}_{t\in\intervalleff{0}{T}}
$,
the Markov process on the state space $\Meas$ induced from 
$\prth{\eta_{t},\xi_{t}}_{t\in\intervalleff{0}{T}}$.
The trajectories of this process occupy
$D\prth{\intervalleff{0}{T};\Meas}$,
the designated path space for càdlàg time trajectories valued in $\Meas$.
We endow
$D\prth{\intervalleff{0}{T};\Meas}$
with the Skorokhod topology. For further details regarding this topology, we refer the reader to
\cite{BillingsleyConvergence13},
which provides an extensive survey on this subject.
For
$G \in {\Cb}^{0,0}\prth{\intervalleff{0}{T} \times \barbulktext}$,
$H \in {\Cb}^{0,0}\prth{\intervalleff{0}{T} \times \Road}$, and
$t\in \intervalleff{0}{T}$, we denote 
\begin{align*}
	\bk{\Empi\prth{t} , \croch{G\prth{t},H\prth{t}}}{} :\hspace{-1mm}& =
	\bk{\EmpiF\prth{t} , G\prth{t}}{} +
	\bk{\EmpiR\prth{t} , H\prth{t}}{} \\
	&=\bk{\EmpiF\prth{\eta_{t}} , G\prth{t}}{} +
	\bk{\EmpiR\prth{\xi_{t}} , H\prth{t}}{}.
\end{align*}

Given an initial probability measure
$\mu_{N}$ on $\SpaceState$,
we define, for $N\geq 2$, the probability measure
$\ProbaQN : = \ProbaPN\prth{\Empi^{-1}}$
on the set of measures $\Meas$,
as the law of the Markov process
$$
\prth{\pi_{N}\prth{t}}_{t\in\intervalleff{0}{T}} = 
\prth{\Empi\prth{\eta_{t},\xi_{t}}}_{t\in\intervalleff{0}{T}}.
$$
Essentially, $\ProbaQN$ allows to provide a description through measures on the macroscopic space of the state distribution of the process when initiated from the measure $\mu_{N}$.

\begin{definition}[Sequence of measures associated with the initial data]
\label{DefMeasAssociat}
Let
$$
v_{0} : \bulk \to \intervalleff{0}{1}
\quad
\text{and}
\quad
u_{0} : \Road \to \intervalleff{0}{1}
$$
be two measurable functions.
We say that a sequence of probability measures 
$$
\prth{\mu_{N}}_{N \geq 2}
= \prth{\mu^{\textnormal{\scalebox{0.9}{field}}}_{N},\mu^{\textnormal{\scalebox{0.9}{road}}}_{N}}_{N \geq 2}
$$
on
$
\SpaceState
=\SpaceStateF\times\SpaceStateR
$
is associated with 
$\prth{v_{0},u_{0}}$
if
\begin{equation}\label{EqCondIniV}
\smaller{1}{$
\lim\limits_{N \to \infty}
\mu^{\textnormal{\scalebox{0.9}{field}}}_{N}
\crocHHH{
\eta\in\SpaceStateF :
\vertII{
	\bk{\EmpiF\prth{\eta},G}{}-\Psb{v_{0}}{G}}
\geq \delta
}
= 0,
$}
\end{equation}
and
\begin{equation}\label{EqCondIniU}
\smaller{1}{$
\lim\limits_{N \to \infty}
\mu^{\textnormal{\scalebox{0.9}{road}}}_{N}
\crocHHH{
\xi\in\SpaceStateR :
\vertII{
	\bk{\EmpiR\prth{\xi},H}{}-\Psp{u_{0}}{H}}
\geq \delta
}
= 0,
$}
\end{equation}

\noindent
for any $\delta > 0$ and any
$G\in {\Cb}^{0}\prth{\barbulk}$ and
$H\in {\Cb}^{0}\prth{\Road}$.
\end{definition}

\begin{remark}\label{REM_why_data_in_01}
Observe in Definition \ref{DefMeasAssociat} that we ask $v_{0}$ and $u_{0}$ to be valued in $\intervalleff{0}{1}$.
This condition cannot be relaxed because of the exclusion rule, that enforces the sites to host at most one particle.
To be convinced with this, consider the case $u_{0} \equiv 2$, and take $H \equiv 1$.
We then have $\Psp{u_{0}}{H} = 2$, and
$$
\bk{\EmpiR\prth{\xi},H}{} =
\frac{1}{N^{\dim-1}}\sum\limits_{i\in \RoadNN}\xi\prth{i} \leq 
\frac{1}{N^{\dim-1}}\sum\limits_{i\in \RoadNN}1 = 1,
$$
so that no configuration allows the limit \eqref{EqCondIniU} to hold, and the data $u_{0} \equiv 2$ is unreachable.
\end{remark}

Here is the main contribution of the present work.

\begin{theorem}[Hydrodynamic limit]
\label{ThHyrdroDy}
Fix a time horizon $T>0$. Let
$v_{0} : \bulk \to \intervalleff{0}{1}$
and
$u_{0} : \Road \to \intervalleff{0}{1}$
be two measurable functions, and
$
\prth{\mu_{N}}_{N \geq 2}
$
a sequence of initial probability measures on
$
\SpaceState
$
associated with 
$\prth{v_{0},u_{0}}$
in the sense of Definition \ref{DefMeasAssociat}.
Then the sequence of probability measures
$\Prth{\ProbaQN}_{N \geq 2}$
converges weakly towards some
$\ProbaQInfty$
which gives mass $1$ to the path
$$
\prtHH{\EmpiiF\prth{t,dxdy},\;\EmpiiR\prth{t,dx}}_{t\in \intervalleff{0}{T}} =
\prtHH{v\prth{t,x,y}dxdy,\; u\prth{t,x}dx}_{t\in \intervalleff{0}{T}},
$$
where $\prth{v,u}$ is the unique weak solution to the Cauchy problem \eqref{syst}-\eqref{data}
in the sense of Definition \ref{def:weak-sol}. In particular, for any $t\in\intervalleff{0}{T}$, any $\delta>0$, and any test functions
$$
G\in{\Cb}^{1,2}\prth{\intervalleff{0}{T} \times \barbulk}
\quad
\text{and}
\quad
H\in{\Cb}^{1,2}\prth{\intervalleff{0}{T} \times \Lfront},
$$
we have
$$
\lim\limits_{N \rightarrow \infty}
\ProbaPN
\crocHH{
\eta_{t}\in\SpaceStateF :
\Verti{
	\bk{\EmpiF\prth{t},G\prth{t}}{}-
	\Psb{v\prth{t}}{G\prth{t}}}
> \delta
} = 0,
$$
and
$$
\lim\limits_{N \rightarrow \infty}
\ProbaPN
\crocHH{
\xi_{t}\in\SpaceStateR :
\Verti{
	\bk{\EmpiR\prth{t},H\prth{t}}{}-
	\Psp{u\prth{t}}{H\prth{t}}}
> \delta
} = 0.
$$
\end{theorem}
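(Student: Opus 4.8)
The plan is to run the classical martingale/entropy method for hydrodynamic limits of exclusion processes, adapted to the two-layer geometry of the field-road model. First I would fix test functions $G$ and $H$ as in \textnormal{(W2)}, set $\phi_{N}(s):=\langle\Empi(s),\croch{G(s),H(s)}\rangle$, and write the Dynkin martingale $M^{N}_{t}=\phi_{N}(t)-\phi_{N}(0)-\int_{0}^{t}(\partial_{s}+\LN)\phi_{N}(s)\,ds$. Applying $\LN$ to $\phi_{N}$ and performing discrete summation by parts on the two diffusive pieces $N^{2}\LBulkF$ and $N^{2}\LBulkR$ should produce, up to $o(1)$, the bulk contributions $d\langle\EmpiF(s),\Delta_{N}G(s)\rangle$ and $D\langle\EmpiR(s),\Delta_{N}H(s)\rangle$ together with the discrete analogues of the trace flux integrals $-\langle v|_{y=1},d\partial_{y}G|_{y=1}\rangle$ and $+\langle v|_{y=0},d\partial_{y}G|_{y=0}\rangle$ (the reflecting structure of the vertical dynamics at the top layer being what generates the first of these). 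The slow piece $N\LRobin$, via the pointwise identity $(\eta(i)-\xi(i))^{2}(\xi(i)-\eta(i))=\xi(i)-\eta(i)$, contributes $\alpha N^{-(\dim-1)}\sum_{i\in\LfrontN}(\xi(i)-\eta(i))\,G(s,i/N,0)$; the slow piece $\LReaction$ contributes the symmetric term with $\eta(i)-\xi(i)$ tested against $H$; and the upper reservoir $\LNeumann$, having $O(1)$ rates on $O(N^{\dim-1})$ sites weighted by $O(N^{-\dim})$, contributes only $O(1/N)$ — which is exactly why a zero-flux condition survives at $y=1$. A routine computation of $\langle M^{N}\rangle_{T}$ from $\LN(\phi_{N}^{2})-2\phi_{N}\LN\phi_{N}$ then gives $\mathbb{E}_{N}[\langle M^{N}\rangle_{T}]=O(1/N)$, so by Doob's inequality $\sup_{t\le T}|M^{N}_{t}|\to 0$ in $L^{2}$.

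Next I would prove tightness of $(\ProbaQN)_{N}$ in $D([0,T];\Meas)$. Since $\Meas$ is compact and Polish, it suffices to establish tightness in $D([0,T];\mathbb{R})$ of the coordinate processes $\phi_{N}$ for a countable dense family of pairs $(G,H)$; this is immediate from the decomposition above, the drift being uniformly (in $N$) Lipschitz in $t$ — its integrand is bounded — and the martingale being negligible. Passing to a subsequential weak limit $\ProbaQInfty$ of $(\ProbaQN)$, standard arguments (continuity of trajectories inherited from tightness, the exclusion bounds $\eta,\xi\in\{0,1\}$, and a one-block replacement) show that $\ProbaQInfty$ is carried by paths of the form $(\EmpiiF(t,dxdy),\EmpiiR(t,dx))=(v(t,x,y)\,dxdy,\,u(t,x)\,dx)$ with $v,u$ valued in $[0,1]$.

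The core of the proof — and the step I expect to be the main obstacle — is upgrading the martingale identity to the weak formulation \eqref{EqWeakField}--\eqref{EqWeakRoad}. Two ingredients are needed. First, an \emph{energy estimate}: using the entropy-production bound against a reference Bernoulli product measure together with the Dirichlet forms of the bulk dynamics, I would show that, in the limit, $v\in L^{2}(0,T;\Hb^{1}(\bulk))$ — this is precisely \textnormal{(W1)}, and, more importantly here, it supplies enough Sobolev regularity up to $\front$ to make sense of the traces $v|_{y=0}$ and $v|_{y=1}$. Second, \emph{boundary replacement lemmas}: one-block and two-block estimates adapted to the single microscopic layers $\LfrontN$ and $\HfrontN$, which allow the single-site occupations $\eta(i,1)$ appearing in the boundary drift to be replaced by a mesoscopic spatial average, and that average to be identified with the trace $v|_{y=0}$ (similarly at $y=1$). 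The genuinely new difficulty is that, at the sites of $\LfrontN$, a fast ($N\alpha$) field flip and a slow ($\alpha$) road flip act simultaneously, coupling two dynamics that live on sets of different dimensions; making the associated Dirichlet-form bounds close together — so that $\eta(\cdot,1)$ may legitimately be traded for the macroscopic trace of $v$ and $\xi$ for $u$ — is where the real work lies, with no direct precedent in the classical slow-boundary literature. Granting this, letting $N\to\infty$ in the martingale identity (using $M^{N}\to 0$) yields \eqref{EqWeakField}--\eqref{EqWeakRoad}, while the initial condition \eqref{data} is matched through the assumption that $(\mu_{N})$ is associated with $(v_{0},u_{0})$.

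Finally I would prove uniqueness of the weak solution within the class \textnormal{(W1)}: after extending the weak formulation to admissible time-dependent test pairs by density (or via a Ladyzhenskaya/Steklov time-regularization, since the solution is only $L^{2}$ in time with values in $\Hb^{1}$), testing the difference $(v_{1}-v_{2},u_{1}-u_{2})$ of two solutions against itself makes the bulk terms yield the dissipation $d\|\nabla(v_{1}-v_{2})\|^{2}+D\|\nabla(u_{1}-u_{2})\|^{2}$, while the two coupling terms combine into $-\alpha\|(v_{1}-v_{2})|_{y=0}-(u_{1}-u_{2})\|_{L^{2}(\Lfront)}^{2}$; since all terms have the favourable sign and the initial difference vanishes, Gr\"onwall forces $(v_{1},u_{1})=(v_{2},u_{2})$. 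Uniqueness identifies all subsequential limits, hence $\ProbaQN\Rightarrow\ProbaQInfty=\delta_{(v,u)}$ along the full sequence, which is the first assertion. The pointwise-in-$t$ statements then follow because convergence in $D([0,T];\Meas)$ to a deterministic continuous path implies convergence in probability of $\langle\Empi(t),\croch{G(t),H(t)}\rangle$ for every fixed $t$, after which one separates the $G$- and $H$-contributions.
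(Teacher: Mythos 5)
Your plan matches the paper's architecture for the first three steps: martingale decomposition via Dynkin's formula with discrete integration by parts, tightness through bounds on the drift and quadratic variation, absolute continuity of the limit measures, the energy estimate giving $v\in L^{2}(0,T;\Hb^{1}(\bulk))$, and the replacement lemmas at $\HfrontN$ and $\LfrontN$ needed to identify the boundary drift with the trace of $v$. Your observation about the delicate interaction of the fast and slow flip rates on $\LfrontN$ is the right place to flag the novelty, even though in the actual proof only $\eta$ needs replacement (the $\xi$-occupations already appear as empirical measures on the road and need no averaging).

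The uniqueness step is where you diverge genuinely, and there is a gap in your route. You propose an energy/Gr\"onwall argument: test the difference $(w,z):=(v_{1}-v_{2},u_{1}-u_{2})$ against itself, extract the dissipation $d\|\nabla w\|^{2}+D\|\nabla z\|^{2}$, and close with the favourable cross term $-\alpha\|w|_{y=0}-z\|^{2}_{L^{2}(\Lfront)}$. The problem is that (W1) only guarantees $u\in L^{2}(0,T;L^{2}(\Road))$, \emph{not} $L^{2}(0,T;\Hb^{1}(\Road))$, and the weak formulation (W2) presents the road diffusion in the form $\int_{0}^{t}\langle u, D\Delta_{x} H\rangle\,ds$ — so to even evaluate the road equation on $H=z$ you would need $z$ to have two derivatives, let alone to integrate by parts once and land on $-D\|\nabla z\|^{2}$. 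Your proposed time-regularization would handle the $L^{2}$-in-time issue but not this missing spatial regularity for $u$. The paper sidesteps this entirely by solving a \emph{dual} reversed-time field-road system with smooth compactly supported sources $(\varphi,\psi)$ (constructed from the field-road heat kernel plus Duhamel), obtaining classical $(G,H)\in\Cb^{1,2}$, plugging them into (W2), and concluding $\int\langle v,\varphi\rangle=\int\langle u,\psi\rangle=0$ for all such sources. This dual approach only ever requires the solution in the class (W1)–(W2) as stated, with no extra regularity for $u$, so it is the more robust route in this setting. Either you must first upgrade the regularity of $u$ (which would itself require an additional microscopic energy estimate for the road dynamics, not present in the paper), or you should switch to the dual-test-function argument.
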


\subsection{The steps to prove Theorem \ref{ThHyrdroDy} and organisation of the paper}\label{SS23Steps}

The proof of the hydrodynamic limit, as outlined in Theorem \ref{ThHyrdroDy}, is inspired by works such as those by Kipnis and Landim \cite{KipnisScaling99} or Baldasso \textit{et al.} \cite{BaldassoExclusion17}, and unfolds in three distinct steps. The first one is to prove the tightness of the probability measures $\prth{\ProbaQN}_{N\geq 2}$ within the Skorokhod topology. This point, established in Section \ref{S3MartiTight}, is crucial to ensure the existence of accumulation points for this sequence.
Following this, we characterize, in Section \ref{S4LimPoints}, the limit points $\ProbaQInfty$ of the sequence $\prth{\ProbaQN}_{N\geq 2}$. 
Specifically, it is demonstrated that every $\ProbaQInfty$ concentrates on measure processes with density relative to the Lebesgue measure at every moment (subsection \ref{SS41_load_density}), and that the corresponding densities satisfy conditions \ref{ItWeak1}-\ref{ItWeak2} (subsections \ref{SS42_load_Sobolev} and \ref{SS44_load_solutions}) that characterize our notion of solution.
This proves that the density of the measures loaded by $\ProbaQInfty$ are weak solutions to the Cauchy problem \eqref{syst}-\eqref{data}.
The third and final step, performed in Section \ref{S5Uniqueness}, consists in showing --- thanks to very adequate test functions --- that the Cauchy problem \eqref{syst}-\eqref{data} admits a unique solution.

\section{Martingales and tightness}\label{S3MartiTight}

\subsection{Martingales}\label{SS31Martin}
We now proceed to explain the martingales associated to our system.
Fix a couple of functions $\prth{G,H}$ with
$G \in {\Cb}^{1,2}\prth{\intervalleff{0}{T}\times\barbulktext}$
and
$H \in {\Cb}^{1,2}\prth{\intervalleff{0}{T}\times\Lfront}$,
and consider the martingale
$\Martin = {\Ms}_{N,G,H}$ with respect to the natural filtration $\sigma((\eta_s,\xi_s)_{0\leq s\leq t})$
given for any $t\in\intervalleff{0}{T}$ by the Dynkin's formula
\begin{equation}\label{EqMartin0}
\smaller{0.86}{$\displaystyle
\hspace{-3.5mm}
\begin{aligned}
\Martin\prth{t} : =
	  \bk{{\pi}_{N}\prth{t} , \croch{G\prth{t},H\prth{t}}}{}
	  - \bk{{\pi}_{N}\prth{0} , \croch{G\prth{0},H\prth{0}}}{}
	  - \int_{0}^{t} \Prth{\partial_{s}+\LN} \Prth{\bk{\Empi\prth{s},\croch{G\prth{s},H\prth{s}}}{}} ds.	
\end{aligned}
$}
\end{equation}
Expanding the empirical measure $\Empi$ with \eqref{EqEmpMeas} in \eqref{EqMartin0}, we see that $\Martin$ can be split into
$\MartinF{G} + \MartinR{H}$ where
\begin{equation}\label{EqMartin}
	\smaller{1}{$\displaystyle
		\begin{aligned}
			\MartinF{G}\prth{t} &=
			\bk{\EmpiF\prth{t},G\prth{t}}{}
			- \bk{\EmpiF\prth{0},G\prth{0}}{}
			- \int_{0}^{t} \Croch{\partial_{s}+\LN} \Croch{\bk{\EmpiF\prth{s},G\prth{s}}{}} ds,\\
			\MartinR{H}\prth{t} &= \bk{\EmpiR\prth{t},H\prth{t}}{}
			- \bk{\EmpiR\prth{0},H\prth{0}}{}
			- \int_{0}^{t} \Croch{\partial_{s}+\LN} \Croch{\bk{\EmpiR\prth{s},H\prth{s}}{}} ds.
		\end{aligned}
	$}
\end{equation}
The quadratic variation of $\Martin$ is given by the martingale $\MartinQ = {\Ns}_{N,G,H}$ defined for any $t\in\intervalleff{0}{T}$ as
\marginalnote{1.8cm}{1.6cm}{-16.8cm}{-0.8cm}{Formules des martingales quadratiques}
\begin{equation}\label{EqMartinQ}
\MartinQ\prth{t} : = \Croch{\Martin\prth{t}}^{2} - \int_{0}^{t} \MartinB\prth{s} ds,
\end{equation}
where
\begin{equation}\label{EqMartinQB}
\hspace{-2mm}
\smaller{0.91}{$\displaystyle
\begin{aligned}
\MartinB\prth{s} : = \LN
	\Croch{\bk{\Empi(s),\croch{G\prth{s},H\prth{s}}}{}^{2}}
	- 2 \,\bk{\Empi(s),\croch{G\prth{s},H\prth{s}}}{}
	\timess
	\LN
	\Croch{\bk{\Empi(s),\croch{G\prth{s},H\prth{s}}}{}}.
\end{aligned}
$}
\end{equation}
A proof that
$\Martin$ and $\MartinQ$
are martingales with respect to the natural filtration is classical and follows the very same lines as \cite[Appendix 1, Section 5]{KipnisScaling99}.

In the integral terms of $\MartinF{G}$ and $\MartinR{H}$ in \eqref{EqMartin}
we expand the expressions of $\EmpiF$, $\EmpiR$ \eqref{EqEmpMeas}, and $\LN$ \eqref{GenGathered}, and use at some point two  discrete summations by parts. After some tedious but straightforward computations --- using moves such as $(\eta(i)-\xi(i))^2(1-2\eta(i))=\xi(i)-\eta(i)$ or $\left(b(1-\eta(i))+(1-b)\eta(i)\right)(1-2\eta(i))=b-\eta(i)$ ---  we reach
\begin{equation}\label{EqMartinF}
\hspace{-2mm}
\smaller{0.94}{$\displaystyle
\begin{aligned}
\MartinF{G}\prth{t} &=
	\bk{\EmpiF\prth{t},G\prth{t}}{}
	- \bk{\EmpiF\prth{0},G\prth{0}}{}
	- \int_{0}^{t}\bk{\EmpiF\prth{s},\partial_{s}G\prth{s}}{}ds\\
	&\hspace{-3mm}\quad -
	\frac{1}{N^{\dim}}\!
		\int_{0}^{t}\sum\limits_{\Is\in\bulkN}^{}
				d\DeltaXN G\prth{s,\sfrac{\I}{N}} \timess \eta_{s}\prth{\I}
			ds
	- \frac{1}{N^{\dim}}\!
		\int_{0}^{t}\sum\limits_{\Is\in\bulkN\setminus\frontN}^{}		
				d\partial_{yy}^{N} G\prth{s,\sfrac{\I}{N}}\timess\eta_{s}\prth{\I}
				ds\\
	&\hspace{-3mm}\quad +
	\frac{1}{N^{\dim-1}}\!
		\int_{0}^{t}\sum\limits_{i\in\HfrontN}^{}
				d\partial_{y}^{N}G\prth{s,\sfrac{i}{N}}\timess\eta_{s}\prth{i}
				ds
	- \frac{1}{N^{\dim-1}}\!
		\int_{0}^{t}\sum\limits_{i\in\LfrontN}^{}
				d\partial_{y}^{N}G\prth{s,\sfrac{i}{N}}\timess\eta_{s}\prth{i}
				ds\\
	&\hspace{-3mm}\quad -
	\frac{\alpha}{N^{\dim-1}}\!
		\int_{0}^{t}\sum\limits_{i\in\RoadNN}^{}
				G\prth{s,\sfrac{i}{N}}\timess \prth{\xi_{s}\prth{i}-\eta_{s}\prth{i}}
				ds
	- \frac{1}{N^{\dim}}\!
		\int_{0}^{t}\sum\limits_{i\in\HfrontN}^{}
				G\prth{s,\sfrac{i}{N}}\timess\prth{b-\eta_{s}\prth{i}}
				ds,
\end{aligned}
$}
\end{equation}
and
\begin{equation}\label{EqMartinR}
\hspace{-3mm}
\smaller{0.94}{$\displaystyle
\begin{aligned}
	\MartinR{H}\prth{t} &=
		\bk{\EmpiR\prth{t},H\prth{t}}{}
		- \bk{\EmpiR\prth{0},H\prth{0}}{}
		- \int_{0}^t\bk{\EmpiR\prth{s},\partial_{s}H\prth{s}}{}ds\\
		& \hspace{-3mm} - \frac{1}{N^{\dim-1}}\!\int_{0}^t\sum\limits_{i\in\RoadNN}^{} D\DeltaXN H\prth{s,\sfrac{i}{N}}\timess\xi_{s}\prth{i}ds
		-  \frac{\alpha}{N^{\dim-1}}\!\int_{0}^t\sum\limits_{i\in\RoadNN}^{}H\prth{s,\sfrac{i}{N}}\timess \prth{\eta_{s}\prth{i}-\xi_{s}\prth{i}}ds.
\end{aligned}
$}
\end{equation}
In \eqref{EqMartinF} and \eqref{EqMartinR}, the discrete Laplacians
$\DeltaXN$ and $\partial_{yy}^{N}$ are defined by
(time dependency is locally drop for the sake of clarity)
\marginalnote{1.4cm}{1.2cm}{-14cm}{-1cm}{Dérivées discrètes}
\begin{equation}\label{EqDiscrLap}
\smaller{0.95}{$\displaystyle
\begin{aligned}
	&\DeltaXN H(\sfrac{i}{N}) :=
	N^{2}\sum\limits_{q=1}^{\dim-1}
	\Croch{H\prth{\sfrac{i+e_{q}}{N}}-2H\prth{\sfrac{i}{N}}+H\prth{\sfrac{i-e_{q}}{N}}},
	&\qquad
	&\forall i \in \RoadN,\\
	&\DeltaXN G(\sfrac{i}{N},\sfrac{j}{N}) :=
	N^{2}\sum\limits_{q=1}^{\dim-1}
	\Croch{G\prth{\sfrac{i+e_{q}}{N},\sfrac{j}{N}}-2G\prth{\sfrac{i}{N},\sfrac{j}{N}}+G\prth{\sfrac{i-e_{q}}{N},\sfrac{j}{N}}},
	&\qquad
	&\forall \prth{i,j} \in \bulkN,\\[2mm]
	&\partial_{yy}^{N}G(\sfrac{i}{N},\sfrac{j}{N}) :=
	N^{2}
	\Croch{G\prth{\sfrac{i}{N},\sfrac{j+1}{N}}-2G\prth{\sfrac{i}{N},\sfrac{j}{N}}+G\prth{\sfrac{i}{N},\sfrac{j-1}{N}}},
	&\qquad
	&\forall \prth{i,j} \in \bulkN \! \setminus \! \frontN,
\end{aligned}
$}
\end{equation}
and the discrete derivative $\partial_{y}^{N}$ at the boundary of the microscopic field by
(time dependency is locally drop for the sake of clarity)
\begin{equation}\label{EqDiscrDy}
\partial_{y}^{N}G(\sfrac{i}{N},\sfrac{j}{N}) :=
\begin{cases}
	N\Croch{G\prth{\sfrac{i}{N},\sfrac{j+1}{N}}-G\prth{\sfrac{i}{N},\sfrac{j}{N}}} \quad & \text{if } j=1,\\[3mm]
	N\Croch{G\prth{\sfrac{i}{N},\sfrac{j}{N}}-G\prth{\sfrac{i}{N},\sfrac{j-1}{N}}} \quad & \text{if } j=N-1.
\end{cases}
\end{equation}

Similarly, we also develop $\MartinB$ from \eqref{EqMartinQB} with the expression of $\LN$ in \eqref{GenGathered}. The computations show that $\MartinB$ can be split into $\MartinBF{G}+\MartinBR{H}$ with 
\begin{equation}\label{EqMartinBF}
\smaller{1}{$\displaystyle
\begin{aligned}
\MartinBF{G}\prth{s} &=
	{\frac{d/2}{N^{2\dim}}}
		\sum\limits_{\substack{\Is,\Ks \in \bulkN \\ \verti{\Is-\Ks}=1}}^{}
				\crocHH{\eta_{s}\prth{\I}-\eta_{s}\prth{\K}}^{2} \timess \crocHH{N\prth{G\prth{s,\sfrac{\I}{N}}-G\prth{s,\sfrac{\K}{N}}}}^{2}
				\\
	& \hspace{2em}
	+ \frac{\alpha}{N^{2\dim-1}}
		\sum\limits_{i\in\RoadNN}^{}
				\crocHH{\eta_{s}\prth{i}-\xi_{s}\prth{i}}^{2} \timess
				\crocHH{
					G\prth{s,\sfrac{i}{N}}
					}^{2}
		\\
	& \hspace{2em} +
	\frac{1}{N^{2\dim}}
	\sum\limits_{i\in\HfrontN}^{}
			\crocHH{
				b\prth{1-\eta_{s}\prth{i}} + \prth{1-b}\eta_{s}\prth{i}
				} \timess
			\crocHH{G\prth{s,\sfrac{i}{N}}}^{2}
\end{aligned}
$}
\end{equation}\\[-1mm]
and\\[-1mm]
\begin{equation}\label{EqMartinBR}
\smaller{1}{$\displaystyle
\begin{aligned}
\MartinBR{H}\prth{s} &=
	{\frac{D/2}{N^{2\prth{p-1}}}}
		\sum\limits_{\substack{i,k \in \RoadNN \\ \verti{i-k}=1}}^{}
				\crocHH{\xi_{s}\prth{i}-\xi_{s}\prth{k}}^{2} \timess \crocHH{N\prth{H\prth{s,\sfrac{i}{N}}-H\prth{s,\sfrac{k}{N}}}}^{2}
				\\
	& \hspace{20mm}
	+ \frac{\alpha}{N^{2\prth{p-1}}}
		\sum\limits_{i\in\RoadNN}^{}
				\crocHH{\eta_{s}\prth{i}-\xi_{s}\prth{i}}^{2} \timess
				\crocHH{H\prth{s,\sfrac{i}{N}}}^{2}.
\end{aligned}
$}
\end{equation}

\subsection{Tightness}\label{SS32Tight}
We are now in position to prove the tightness of
$\prth{\ProbaQN}_{N\geq 2}$.

\begin{proposition}[Tightness]\label{PropTight}
For any sequence of initial measures
$\prth{\mu_{N}}_{N \geq 2}$
on
$\SpaceState$,
the sequence
$\prth{\ProbaQN}_{N\geq 2}$
is tight in the Skorokhod topology of
$D\prth{\intervalleff{0}{T};\Meas}$.
\end{proposition}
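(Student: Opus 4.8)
The plan is the classical one for hydrodynamic limits (see \cite[Chapter 4]{KipnisScaling99}, \cite{BaldassoExclusion17}). Since $\Meas=\MeasF\times\MeasR$ is compact and metrizable, with its topology generated by the functionals $\prth{\mu,\nu}\mapsto\bk{\mu,G}{}$ and $\prth{\mu,\nu}\mapsto\bk{\nu,H}{}$ for $G$ and $H$ ranging over countable dense subsets of $\Cb^{\infty}\prth{\barbulk}$ and $\Cb^{\infty}\prth{\Road}$ respectively, it suffices, by the criterion of \cite[Chapter 4]{KipnisScaling99}, to prove that for each such fixed $G$ (resp. $H$) the real-valued processes $\prth{\bk{\EmpiF\prth{t},G}{}}_{t\in\intervalleff{0}{T}}$ (resp. $\prth{\bk{\EmpiR\prth{t},H}{}}_{t\in\intervalleff{0}{T}}$) are tight in $D\prth{\intervalleff{0}{T};\mathbb{R}}$. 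This I would check via Aldous' criterion, splitting each such process into a drift part plus a martingale part by means of the decompositions \eqref{EqMartinF}--\eqref{EqMartinR}.

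\textbf{Uniform bounds on drift and quadratic variation.} Fix $G$ (take $G$ time-independent; the general case only adds the term $\bk{\EmpiF\prth{s},\partial_sG\prth{s}}{}$, bounded likewise). Formula \eqref{EqMartinF} reads $\bk{\EmpiF\prth{t},G}{}=\bk{\EmpiF\prth{0},G}{}+\int_0^t B_N^G\prth{s}\,ds+\MartinF{G}\prth{t}$, where $B_N^G\prth{s}$ collects the discrete-Laplacian, discrete-normal-derivative, exchange and reservoir summands evaluated at $\prth{\eta_s,\xi_s}$. Using $\eta_s\prth{\I},\xi_s\prth{i}\in\acco{0,1}$, the uniform boundedness of $\DeltaXN G$, $\partial_{yy}^{N}G$ and $\partial_{y}^{N}G$ (they converge to the corresponding continuous derivatives of $G$), and the counting $\verti{\bulkN}\asymp N^{\dim}$, $\verti{\frontN}\asymp N^{\dim-1}$, every summand of \eqref{EqMartinF} is $O(1)$ after division by its normalising prefactor --- in particular the exchange term is $O(1)$ precisely because the $N$--speed-up carried by $\LRobin$ in \eqref{GenGathered} cancels against the $N^{-(\dim-1)}$ normalisation of $\EmpiR$, and the reservoir term is even $O(1/N)$. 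Hence $\sup_{N}\sup_{s}\verti{B_N^G\prth{s}}=:C_G<\infty$, so $t\mapsto\int_0^t B_N^G\prth{s}\,ds$ is Lipschitz with constant $C_G$, uniformly in $N$. For the martingale, taking $H\equiv 0$ in \eqref{EqMartin}--\eqref{EqMartinQ} shows $\MartinF{G}$ has predictable quadratic variation $\int_0^\cdot\MartinBF{G}\prth{s}\,ds$, and \eqref{EqMartinBF} gives $\verti{\MartinBF{G}\prth{s}}\leq C/N^{\dim}$ uniformly in $s$ and in the configuration (the factors $N\prth{G\prth{\sfrac{\I}{N}}-G\prth{\sfrac{\K}{N}}}$ being bounded by $\Vertii{\nabla G}{\infty}$). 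Therefore $\EsperanceMuN\crocHH{\MartinF{G}\prth{t}^2}=\EsperanceMuN\crocHH{\int_0^t\MartinBF{G}\prth{s}\,ds}\leq CT/N^{\dim}\to 0$, and by Doob's $L^2$ inequality $\EsperanceMuN\crocHH{\sup_{t\leq T}\MartinF{G}\prth{t}^2}\leq 4CT/N^{\dim}\to 0$.

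\textbf{Aldous' criterion and conclusion.} The first Aldous condition is immediate: $\verti{\bk{\EmpiF\prth{t},G}{}}\leq\Vertii{G}{\infty}$, so the family is tight in $\mathbb{R}$ for each $t$. For the second, let $\tau$ be a stopping time bounded by $T$ and $0\leq\theta\leq\gamma$. Then
$$
\bk{\EmpiF\prth{\tau+\theta},G}{}-\bk{\EmpiF\prth{\tau},G}{}=\int_{\tau}^{\tau+\theta}B_N^G\prth{s}\,ds+\prth{\MartinF{G}\prth{\tau+\theta}-\MartinF{G}\prth{\tau}},
$$
where the first term is bounded by $C_G\gamma$, while the optional stopping theorem together with $\verti{\MartinBF{G}}\leq C/N^{\dim}$ yields $\EsperanceMuN\crocHH{\prth{\MartinF{G}\prth{\tau+\theta}-\MartinF{G}\prth{\tau}}^2}=\EsperanceMuN\crocHH{\int_{\tau}^{\tau+\theta}\MartinBF{G}\prth{s}\,ds}\leq C\gamma/N^{\dim}$. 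Chebyshev's inequality then gives, for every $\varepsilon>0$, $\lim_{\gamma\to 0}\limsup_{N}\sup_{\tau\leq T,\,\theta\leq\gamma}\ProbaPN\crocHH{\verti{\bk{\EmpiF\prth{\tau+\theta},G}{}-\bk{\EmpiF\prth{\tau},G}{}}>\varepsilon}=0$, so $\prth{\bk{\EmpiF\prth{t},G}{}}_t$ is tight. The identical computation based on \eqref{EqMartinR} and \eqref{EqMartinBR} (with $\verti{\MartinBR{H}}\leq C/N^{\dim-1}$) handles $\prth{\bk{\EmpiR\prth{t},H}{}}_t$. Tightness of $\prth{\ProbaQN}_{N\geq 2}$ in $D\prth{\intervalleff{0}{T};\Meas}$ follows.

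\textbf{Where the difficulty lies.} The estimates are routine exclusion-process bounds, and the only point demanding genuine care is the bookkeeping of the boundary contributions arising from the parts \eqref{GenRobinBC}--\eqref{GenReservoir} of the generator: one must verify that the factor $N$ multiplying $\LRobin$ in \eqref{GenGathered}, combined with the $N^{-(\dim-1)}$ normalisation of $\EmpiR$ and with the identification $\RoadNN\simeq\LfrontN\subset\bulkN$ across the two lattices of different dimensions, leaves the exchange drift of order $1$ and its contributions to $\MartinBF{G}$ and $\MartinBR{H}$ of orders $N^{-\dim}$ and $N^{-(\dim-1)}$ respectively, rather than producing divergent terms. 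Once the explicit formulas \eqref{EqMartinF}--\eqref{EqMartinBR} are established this reduces to counting, and no new idea beyond the scheme of \cite{KipnisScaling99,BaldassoExclusion17} is needed for tightness itself.
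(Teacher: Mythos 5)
Your proof is correct and rests on exactly the same estimates as the paper's: Dynkin decomposition of $\bk{\EmpiF\prth{t},G}{}$ and $\bk{\EmpiR\prth{t},H}{}$ into a drift integral plus a martingale, a uniform $O(1)$ bound on the drift obtained by counting sites against the various prefactors in \eqref{EqTightInt2F}--\eqref{EqTightInt2R}, and the bounds $\verti{\MartinBF{G}}=O\prth{N^{-\dim}}$, $\verti{\MartinBR{H}}=O\prth{N^{-\prth{\dim-1}}}$ from \eqref{EqMartinBF}--\eqref{EqMartinBR}. The only real difference is the packaging: the paper verifies the Kipnis--Landim conditions \ref{ItTight1}--\ref{ItTight2}, a deterministic-time modulus-of-continuity criterion, and controls the martingale increment by Cauchy--Schwarz together with the fact that $\MartinQ$ in \eqref{EqMartinQ} is a mean-zero martingale; you instead invoke Aldous' criterion with stopping times and optional stopping, plus a Doob $L^2$ bound. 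In this setting the two routes are interchangeable --- both reduce to the same $O(\gamma)$ drift estimate and the same vanishing quadratic variation --- so this is a cosmetic divergence, not a different proof. Your explicit isolation of the $N\cdot\LRobin\cdot N^{-(\dim-1)}$ cancellation is exactly the bookkeeping point the paper leaves implicit, so the remark is accurate and helpful. No gaps.
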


\medskip

\begin{proof}[Proof of Proposition \ref{PropTight}]
In accordance with the method presented in \cite[Chapter 4, Section 1]{KipnisScaling99},
we must verify the following two statements for any
$G\in {\Cb}^{2}\prth{\barbulktext}$ and any
$H\in {\Cb}^{2}\prth{\Lfront}$:
\begin{itemize}
\refstepcounter{ITtight}\label{ItTight1}
	\item[\textbf{\prth{T1}}]
	For all $t \in \intervalleff{0}{T}$ and all $\varepsilon>0$, there is $M>0$ such that
	\begin{equation}\label{EqTight1}
	\sup\limits_{N\geq 2}\;
	\accOOO{
	\ProbaQN\prtHH{\vertI{\bk{\Empi\prth{t} , \croch{G,H}}{}} \geq M
	}}
	\;\leq\;
	\varepsilon.
	\end{equation}
\refstepcounter{ITtight}\label{ItTight2}
	\item[\textbf{\prth{T2}}]
	For all $\varepsilon>0$, we have
	\begin{equation}\label{EqTight2}
	\hspace{-10mm}
	\lim\limits_{\delta\to 0} \;
	\limsup\limits_{N \rightarrow \infty} \;
	\sup\limits_{t\in\intervalleff{\delta}{T-\delta}} \;
	\sup\limits_{\verti{\theta}\leq \delta}\;
	\accOOO{
	\ProbaQN\prtHH{\vertI{\bk{\Empi\prth{t+\theta} , \croch{G,H}}{}-\bk{\Empi\prth{t} , \croch{G,H}}{}}\geq \varepsilon}
	} = 0.
	\end{equation}
\end{itemize}
Note that, within this proof, we require the test functions $G$ and $H$ to depend solely on the spatial variable.\\[3mm]
\noindent\hspace{-0.5mm}\textbullet\;\textit{Proof of \textnormal{\ref{ItTight1}}.}
Since the empirical measures $\EmpiF$ and $\EmpiR$ are both bounded by $1$, we almost surely have
$$
\vertI{\bk{\EmpiF\prth{t} , G}{}}\leq \vertii{G}{L^{\infty}\prth{\bulk}}
\qquad
\text{and}
\qquad
\vertI{\bk{\EmpiR\prth{t} , H}{}}\leq \vertii{H}{L^{\infty}\prth{\Lfront}}.
$$
As a result, \eqref{EqTight1} is achieved with
${M = \varepsilon/\prth{\vertii{G}{L^{\infty}\prth{\bulk}}+\vertii{H}{L^{\infty}\prth{\Lfront}}}}$.\\[3mm]
\noindent\hspace{-0.5mm}\textbullet\;\textit{Proof of \textnormal{\ref{ItTight2}}.}
By expanding the terms
$
\bk{\Empi\prth{t+\theta} , \croch{G,H}}{}
$
and
$
\bk{\Empi\prth{t} , \croch{G,H}}{}
$
with those of the Dynkin's formula in \eqref{EqMartin0} and using the Markov and the triangular inequalities, we can see that \eqref{EqTight2} holds if we prove the following limits:
\begin{equation}\label{EqTightMart}
	\lim\limits_{\delta\to 0} \;
	\limsup\limits_{N \rightarrow \infty} \;
	\sup\limits_{t\in\intervalleff{\delta}{T-\delta}} \;
	\sup\limits_{\verti{\theta}\leq \delta}\;
	\accOOO{
	\EsperanceMuN\prtHH{\vertI{\Martin\prth{t+\theta} - \Martin\prth{t}}}
	} = 0,
\end{equation}
\begin{equation}\label{EqTightIntF}
	\lim\limits_{\delta\to 0} \;
	\limsup\limits_{N \rightarrow \infty} \;
	\sup\limits_{t\in\intervalleff{\delta}{T-\delta}} \;
	\sup\limits_{\verti{\theta}\leq \delta}\;
	\accOOO{
	\EsperanceMuN\prtHHH{\vertII{\int_{t}^{t+\theta} \LN\Croch{\bk{\EmpiF\prth{s},G}{}} ds \,}}
	} = 0,
\end{equation}
\begin{equation}\label{EqTightIntR}
	\lim\limits_{\delta\to 0} \;
	\limsup\limits_{N \rightarrow \infty} \;
	\sup\limits_{t\in\intervalleff{\delta}{T-\delta}} \;
	\sup\limits_{\verti{\theta}\leq \delta}\;
	\accOOO{
	\EsperanceMuN\prtHHH{\vertII{\int_{t}^{t+\theta} \LN\Croch{\bk{\EmpiR\prth{s},H}{}} ds \,}}
	} = 0.
\end{equation}
We start to show \eqref{EqTightMart}.
Focusing on the terms between the brackets, the Cauchy-Schwarz inequality yields
{\begin{align}
	\EsperanceMuN\prtHH{\vertI{\Martin\prth{t+\theta} - \Martin\prth{t}}}
	&\leq 
	\sqrt{
	\EsperanceMuN\prtHH{\crocH{\Martin\prth{t+\theta}-\Martin\prth{t}}^{2}}
	}\nonumber\\
	&=
		\sqrt{
		\EsperanceMuN\prtHH{\crocH{\Martin\prth{t+\theta}}^{2}}-
		\EsperanceMuN\prtHH{\crocH{\Martin\prth{t}}^{2}}
		},\label{EqTightSqrt}
\end{align}
where the second line arises from the martingale property.}
We use then the fact that the quadratic variation $\MartinQ$ defined in \eqref{EqMartinQ} is a zero-mean martingale to simplify \eqref{EqTightSqrt} into
\begin{equation}\label{EqTightSqrtB}
\smaller{1}{$\displaystyle
\EsperanceMuN\prtHH{\vertI{\Martin\prth{t+\theta} - \Martin\prth{t}}} \leq
\sqrt{
\EsperanceMuN\prtHH{\int_{t}^{t+\theta} \verti{\MartinB\prth{s}} \, ds}
}.
$}
\end{equation}
Now observe from \eqref{EqMartinBF} and \eqref{EqMartinBR} that, for any $s \in \intervalleoo{t-\delta}{t+\delta}$,
\begin{equation*}
\smaller{0.98}{$\displaystyle
\begin{aligned}
	\verti{\MartinBF{G}\prth{s}} &\leq
		\frac{d/2}{N^{2p}}
			\vertii{\nabla G}{L^{\infty}\prth{\bulk}}^{2}
			\prtHHH{
				\,\raisebox{3mm}{$\sum\limits_{\substack{\Is,\Ks \in \bulkN \\ \verti{\Is-\Ks}=1}}^{}\hspace{-3.5mm}{\raisebox{-0.3mm}{$\displaystyle 1 $}}$\hspace{0.7mm}}\,
				}
		+ \frac{\alpha}{N^{2p-1}}
			\vertii{G}{L^{\infty}\prth{\bulk}}^{2}
			\prtHHH{\raisebox{2mm}{$\sum\limits_{i\in\RoadNN}^{} \hspace{-2.5mm}{\raisebox{-0.3mm}{$\displaystyle 1 $}}$\hspace{0.05mm}}}
		+ \frac{1}{N^{2p}}
			\vertii{G}{L^{\infty}\prth{\bulk}}^{2}
			\prtHHH{\raisebox{2mm}{$\sum\limits_{i\in\RoadNN}^{} \hspace{-2.5mm}{\raisebox{-0.3mm}{$\displaystyle 1 $}}$\hspace{0.05mm}}}\\
\end{aligned}
$}
\end{equation*}
and
\begin{equation*}
\smaller{0.98}{$\displaystyle
\begin{aligned}
	\verti{\MartinBR{H}\prth{s}} &\leq
		\frac{D/2}{N^{2\prth{p-1}}}
		\vertii{\nablaX H}{L^{\infty}\prth{\Road}}^{2}
		\prtHHH{\,\raisebox{3mm}{$\sum\limits_{\substack{i,k \in \RoadNN \\ \verti{i-k}=1}}^{}
		\hspace{-3.5mm}{\raisebox{-0.3mm}{$\displaystyle 1 $}}$\hspace{0.6mm}}\,}
	+ \frac{\alpha}{N^{2\prth{p-1}}}
		\vertii{H}{L^{\infty}\prth{\Road}}^{2}
		\prtHHH{\raisebox{2mm}{$\sum\limits_{i\in\RoadNN}^{} \hspace{-2.5mm}{\raisebox{-0.3mm}{$\displaystyle 1 $}}$\hspace{0.05mm}}}.
\end{aligned}
$}
\end{equation*}
As a result, we have
$\verti{\MartinBF{G}\prth{s}} = O\prth{1/N^{p}}$
and
$\verti{\MartinBR{H}\prth{s}} = O\prth{1/N^{p-1}}$, and then
$\verti{\MartinB\prth{s}} = O\prth{1/N^{p-1}}$.
Combining this control with 
\eqref{EqTightSqrtB}, the proof of the limit
\eqref{EqTightMart} is then completed.

We move now on the proofs of
\eqref{EqTightIntF} and \eqref{EqTightIntR}.
By similar computations as those used to develop $\MartinF{G}$ and $\MartinR{H}$ in \eqref{EqMartinF} and \eqref{EqMartinR}, we express the terms under the integrals in
\eqref{EqTightIntF} and \eqref{EqTightIntR}:
\begin{equation}\label{EqTightInt2F}
\smaller{0.91}{$\displaystyle
\begin{aligned}
\LN\Croch{\bk{\EmpiF\prth{s},G}{}} &=
	\frac{1}{N^{\dim}}\!
		\sum\limits_{\Is\in\bulkN}^{}
				d\DeltaXN G\prth{\sfrac{\I}{N}}\timess\eta_{s}\prth{\I}
	+ \frac{1}{N^{\dim}}\!
		\sum\limits_{\Is\in\bulkN\setminus\frontN}^{}
				d\partial_{yy}^{N} G\prth{\sfrac{\I}{N}}\timess\eta_{s}\prth{\I}\\
	&\hspace{4mm}
	- \frac{1}{N^{\dim-1}}\!
		\sum\limits_{i\in\HfrontN}^{}
				d\partial_{y}^{N}G\prth{\sfrac{i}{N}}\timess\eta_{s}\prth{i}
	+ \frac{1}{N^{\dim-1}}\!
		\sum\limits_{i\in\LfrontN}^{}
				d\partial_{y}^{N}G\prth{\sfrac{i}{N}}\timess\eta_{s}\prth{i}\\
	&\hspace{4mm}
	+ \frac{\alpha}{N^{\dim-1}}\!
		\sum\limits_{i\in\RoadNN}^{}
				G\prth{\sfrac{i}{N}}\timess \prth{\xi_{s}\prth{i}-\eta_{s}\prth{i}}
	+ \frac{1}{N^{\dim}}\!
		\sum\limits_{i\in\HfrontN}^{}
				G\prth{\sfrac{i}{N}}\timess\prth{b-\eta_{s}\prth{i}}\\
\end{aligned}
$}
\end{equation}
and
\begin{equation}\label{EqTightInt2R}
\smaller{0.91}{$\displaystyle
\begin{aligned}
	\LN\Croch{\bk{\EmpiR\prth{s},H}{}} &=
		\frac{1}{N^{\dim-1}}\!
			\sum\limits_{i\in\RoadNN}^{}
					D\DeltaXN H\prth{\sfrac{i}{N}}\timess\xi_{s}\prth{i}
	+ \frac{\alpha}{N^{\dim-1}}\!
		\sum\limits_{i\in\RoadNN}^{}
				H\prth{\sfrac{i}{N}}\timess \prth{\eta_{s}\prth{i}-\xi_{s}\prth{i}},\\[-2.5mm]
\end{aligned}
$}
\end{equation}
where we recall that the discrete operators $\DeltaXN$, $\partial_{yy}^{N}$, and $\partial_{y}^{N}$ are defined in \eqref{EqDiscrLap} and \eqref{EqDiscrDy}. Using then some Taylor expansions to control the discrete derivatives in
\eqref{EqTightInt2F} and \eqref{EqTightInt2R},
we reach
\begin{equation*}
\smaller{0.90}{$\displaystyle
\begin{aligned}
	\verti{\LN\Croch{\bk{\EmpiF\prth{s},G}{}}} &\leq
		\frac{d}{N^{p}}
			\Prth{\sup\limits_{\bulk} \vertI{\DeltaX G} + O\prth{1/N}}\!\!
			\prtHHH{\raisebox{1.5mm}{$\sum\limits_{\Is\in\bulkN}^{}
			\hspace{-2.5mm}{\raisebox{-0.3mm}{$\displaystyle 1 $}}$\hspace{-1.3mm}
			}}
		+\frac{d}{N^{p}}
			\Prth{\sup\limits_{\bulk} \vertI{\partial_{yy} G} + O\prth{1/N}}\!\!
			\prtHHH{\raisebox{2mm}{$\sum\limits_{\Is\in\bulkN\setminus\frontN}^{}
			\hspace{-5mm}{\raisebox{-0.3mm}{$\displaystyle 1 $}}$\hspace{0.8mm}
			}}\\
	&\hspace{1mm}
		+\frac{d}{N^{p-1}}
			\Prth{\sup\limits_{\Hfront} \vertI{\partial_{y} G} + O\prth{1/N}}\!\!
			\prtHHH{\raisebox{1.5mm}{$\sum\limits_{i\in\HfrontN}^{}
			\hspace{-2mm}{\raisebox{-0.3mm}{$\displaystyle 1 $}}$\hspace{-1.3mm}
			}}
		+\frac{d}{N^{p-1}}
			\Prth{\sup\limits_{\Lfront} \vertI{\partial_{y} G} + O\prth{1/N}}\!\!
			\prtHHH{\raisebox{1.5mm}{$\sum\limits_{i\in\LfrontN}^{}
			\hspace{-2.5mm}{\raisebox{-0.3mm}{$\displaystyle 1 $}}$\hspace{-1.3mm}
			}}\\
	&\hspace{1mm}
		+\frac{\alpha}{N^{p-1}}
			\Prth{\sup\limits_{\Lfront} \vertI{G}}\!\!
			\prtHHH{\raisebox{2mm}{$\sum\limits_{i\in\RoadNN}^{}
			\hspace{-2.5mm}{\raisebox{-0.3mm}{$\displaystyle 1 $}}$\hspace{-1.3mm}
			}}
		+
			\Prth{\sup\limits_{\Hfront} \vertI{G}}\!\!
			\prtHHH{\raisebox{1.5mm}{$\sum\limits_{i\in\HfrontN}^{}
			\hspace{-2mm}{\raisebox{-0.3mm}{$\displaystyle 1 $}}$\hspace{-1.3mm}
			}}\\
\end{aligned}
$}
\end{equation*}
and
\begin{equation*}
\smaller{0.90}{$\displaystyle
\begin{aligned}
	\verti{\LN\Croch{\bk{\EmpiR\prth{s},H}{}}} &\leq
		\frac{D}{N^{\dim-1}}
			\Prth{\sup\limits_{\Road} \vertI{\DeltaX H} + O\prth{1/N}}\!\!
			\prtHHH{\raisebox{2mm}{$\sum\limits_{i\in\RoadNN}^{}
			\hspace{-2.5mm}{\raisebox{-0.3mm}{$\displaystyle 1 $}}$\hspace{-1.3mm}
			}}
		+\frac{\alpha}{N^{p-1}}
			\Prth{\sup\limits_{\Road} \vertI{H}}\!\!
			\prtHHH{\raisebox{2mm}{$\sum\limits_{i\in\RoadNN}^{}
			\hspace{-2.5mm}{\raisebox{-0.3mm}{$\displaystyle 1 $}}$\hspace{-1.3mm}
			}}.
\end{aligned}
$}
\end{equation*}
From this, it follows that
$\verti{\LN\Croch{\bk{\EmpiF\prth{s},G}{}}}$
and
$\verti{\LN\Croch{\bk{\EmpiR\prth{s},H}{}}}$
remain bounded.
Therefore,
for some constant $C>0$, we have
$$
\EsperanceMuN\prtHHH{\vertII{\int_{t}^{t+\theta} \LN\Croch{\bk{\EmpiF\prth{s},G}{}} ds \,}} \leq
C\delta
$$
and
$$
\EsperanceMuN\prtHHH{\vertII{\int_{t}^{t+\theta} \LN\Croch{\bk{\EmpiR\prth{s},H}{}} ds \,}} \leq
C\delta,
$$
whom the limits
\eqref{EqTightIntF} and \eqref{EqTightIntR}
are the straight consequence.

Having verified both conditions
\ref{ItTight1} and \ref{ItTight2},
we can conclude that the sequence
$\prth{\ProbaQN}_{N\geq 2}$
is tight with respect to the Skorokhod topology on 
$D\prth{\intervalleff{0}{T};\Meas}$.
This completes the proof.
\end{proof}

\section{Characterization of the limit points of \texorpdfstring{$\prth{\ProbaQN}_{N\geq 2}$}{(QN)}}\label{S4LimPoints}

\subsection{The limit points of $\prth{\ProbaQN}_{N\geq 2}$ load paths with density}\label{SS41_load_density}
\begin{proposition}[Loading Lebesgue continuous measure processes]\label{PropAbsContLeb}
Any limit point of the sequence 
$\prth{\ProbaQN}_{N\geq 2}$,
referred to as $\ProbaQInfty$,
is concentrated on the set
$D^{0}\prth{\intervalleff{0}{T};\Meas}$
of couple of measure processes that are, at any time, absolutely continuous with respect to the Lebesgue measure on $\bulk$ and $\Road$ respectively. More precisely, for any $\ProbaQInfty$ in the closure of
$\prth{\ProbaQN}_{N\geq 2}$,
we have
$$
\ProbaQInfty \Prth{D^{0}\prth{\intervalleff{0}{T};\Meas}} = 1,
$$
where
\renewcommand{\gap}{0.8mm}
{\begin{equation*}\label{EqAbsCont}
\hspace{-3mm}
\begin{aligned}
	&D^{0}\prth{\intervalleff{0}{T};\Meas} : =
	\Big\{
	\Prth{\EmpiiF\prth{t},\EmpiiR\prth{t}}_{t\in\intervalleff{0}{T}}
	\in 
	D\prth{\intervalleff{0}{T};\Meas}
	\text{ such that},\\[\gap]
	&\hspace{36mm}
	\text{for any $t\in \intervalleff{0}{T}$,}
	\Prth{\EmpiiF\prth{t},\EmpiiR\prth{t}} \text{ is absolutely}\\[\gap]
	&\hspace{36mm}
	\text{continuous with respect to the Lebesgue measure on }\\[\gap]
	&\hspace{36mm}
	\prth{\bulk\times\Road}, \text{ with density }\prth{v\prth{t,\point} , u\prth{t,\point}}\in\intervalleff{0}{1}^{\bulk}\times\intervalleff{0}{1}^{\Road}
	\Big\}.  
\end{aligned}
\end{equation*}}
\end{proposition}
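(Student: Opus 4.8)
Let $\ProbaQInfty$ be a limit point of $\prth{\ProbaQN}_{N\geq2}$, \ie the weak limit of some subsequence which, after relabelling, we still denote $\prth{\ProbaQN}_{N\geq2}$. The strategy is then the classical one for a fixed-time absolute continuity statement (see \cite[Chapter 4]{KipnisScaling99}): one combines the exclusion rule --- which keeps the microscopic densities bounded by $1$ --- with soft compactness arguments in the Skorokhod space, the field marginal $\EmpiF$ and the road marginal $\EmpiR$ being handled separately but by the very same reasoning.

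The first ingredient I would record is that, because $\eta_{s}(\I)\in\acco{0,1}$ and $\xi_{s}(i)\in\acco{0,1}$, for every \emph{nonnegative} $G\in\Cb^{0}(\overline{\bulk})$ one has, uniformly in $s\in\intervalleff{0}{T}$ and in the configuration,
$$
0\leq\bk{\EmpiF\prth{s},G}{}\leq\frac{1}{N^{\dim}}\sum_{\Is\in\bulkN}G\Prth{\tfrac{\I}{N}}\xrightarrow[N\to\infty]{}\int_{\bulk}G\,,
$$
by convergence of Riemann sums, and similarly $0\leq\bk{\EmpiR\prth{s},H}{}\leq N^{-(\dim-1)}\sum_{i\in\RoadNN}H(i/N)\to\int_{\Road}H$ for every nonnegative $H\in\Cb^{0}(\Road)$. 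The second ingredient is that $\ProbaQInfty$ charges only continuous trajectories: each elementary move of the dynamics flips $\eta$ at one or two sites, or $\xi$ at one site, so the jumps of $\pi_{N}$ are of size $O\prth{N^{-(\dim-1)}}$ uniformly in time, whence every weak limit is concentrated on $C\prth{\intervalleff{0}{T};\Meas}$. In particular, for each fixed $t$, the evaluation map $\pi\mapsto\pi(t)$ is $\ProbaQInfty$-almost surely continuous on $D\prth{\intervalleff{0}{T};\Meas}$.

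I would then fix countable families $\prth{G_{k}}_{k\geq1}$ and $\prth{H_{k}}_{k\geq1}$ of nonnegative continuous functions, dense for the uniform norm among the nonnegative elements of $\Cb^{0}(\overline{\bulk})$ and $\Cb^{0}(\Road)$ respectively. For a \emph{fixed} rational time $t$ and a \emph{fixed} index $k$, the functional $\pi\mapsto\Prth{\bk{\EmpiiF(t),G_{k}}{}-\int_{\bulk}G_{k}}^{+}$ is bounded (since $\Meas$ is compact) and $\ProbaQInfty$-almost surely continuous, so the continuous mapping theorem together with the bound above gives
$$
\int\Prth{\bk{\EmpiiF(t),G_{k}}{}-\int_{\bulk}G_{k}}^{+}\,d\ProbaQInfty
=\lim_{N\to\infty}\int\Prth{\bk{\EmpiF(t),G_{k}}{}-\int_{\bulk}G_{k}}^{+}\,d\ProbaQN=0,
$$
whence $\bk{\EmpiiF(t),G_{k}}{}\leq\int_{\bulk}G_{k}$ holds $\ProbaQInfty$-a.s., and likewise $\bk{\EmpiiR(t),H_{k}}{}\leq\int_{\Road}H_{k}$. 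Intersecting these countably many full-measure events over all rational $t$ and all $k$, and then invoking the density of the families, one obtains that $\ProbaQInfty$-a.s., at every rational $t$, $\bk{\EmpiiF(t),G}{}\leq\int_{\bulk}G$ and $\bk{\EmpiiR(t),H}{}\leq\int_{\Road}H$ for \emph{all} nonnegative continuous $G,H$; as $\EmpiiF(t)$ and $\EmpiiR(t)$ are moreover positive measures, this says exactly that they are absolutely continuous with respect to the Lebesgue measure, with densities valued in $\intervalleff{0}{1}$. It then remains to upgrade \glmt{at every rational $t$} to \glmt{at every $t\in\intervalleff{0}{T}$}: for an arbitrary $t$ one picks rationals $t_{n}\to t$, uses the continuity of the path to get $\EmpiiF(t_{n})\to\EmpiiF(t)$ and $\EmpiiR(t_{n})\to\EmpiiR(t)$ weakly, and observes that the inequalities $\bk{\cdot,G}{}\leq\int G$ are stable under weak limits for nonnegative continuous $G$. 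This yields $\ProbaQInfty\prth{D^{0}\prth{\intervalleff{0}{T};\Meas}}=1$.

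I do not expect a genuine obstacle here; the argument is entirely \glmt{soft}. The only points that require care are the bookkeeping of the three nested quantifiers --- \glmt{for $\ProbaQInfty$-almost every trajectory}, \glmt{for all times $t$}, \glmt{for all test functions} --- which is precisely why one first works with countable dense families and rational times and upgrades afterwards, together with the verification that the limiting trajectories are continuous in time, which is what makes the evaluation maps $\pi\mapsto\pi(t)$ almost surely continuous. The fact that the field and the road are sets of different dimensions plays no role at this stage, the two marginals being treated independently.
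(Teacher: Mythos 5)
Your proof is correct and follows the same route as the paper's (very terse) argument: the exclusion rule keeps the empirical densities bounded by one, and weak convergence transfers the domination by Lebesgue measure to the limit measures. The paper essentially records the bound $\vertI{\bk{\EmpiiF\prth{t},G}{}}\leq\vertii{G}{L^{1}\prth{\bulk}}$ and invokes Lusin's theorem, leaving implicit the passage from the empirical measures to the limit and, more importantly, the fact that the conclusion must hold \emph{simultaneously for every} $t\in\intervalleff{0}{T}$, not merely for Lebesgue-a.e.\ $t$; you supply exactly the standard missing ingredients, namely that the limit trajectories are continuous (because the jumps of $\pi_{N}$ are of vanishing size), the countable dense families of nonnegative test functions and rational times, the fixed-time continuous mapping step valid at $\ProbaQInfty$-a.s.\ continuity points, and the upgrade from rational to all times. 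These are precisely what turns the remark into a proof of the ``at any time'' statement, so your write-up is a complete and faithful expansion of the paper's one-liner rather than a different approach.
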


Indeed, the simple exclusion rule provides the controls
$$
\verti{\bk{\EmpiiF\prth{t},G}{}} \leq \vertii{G}{L^{1}\prth{\bulk}}
\qquad
\text{and}
\qquad
\verti{\bk{\EmpiiR\prth{t},H}{}} \leq \vertii{H}{L^{1}\prth{\Road}},
$$
for any $t\in \intervalleff{0}{T}$, 
$G\in{\Cb}\prth{\barbulk}$
and
$H\in{\Cb}\prth{\Road}$.
The conclusion of the proposition follows from this and the Lusin theorem.

\subsection{The limit points of $\prth{\ProbaQN}_{N\geq 2}$ load paths whose densities satisfy \hyperref[ItWeak1]{\textbf{(W1)}}}\label{SS42_load_Sobolev}

The next step consists in showing that the limit trajectories own the regularity claimed in \ref{ItWeak1}.
Let us define the set $\SolSetONE$ by
$$
\smaller{0.93}{$
\SolSetONE : =
\accOOO{
\prtH{\EmpiiF\prth{t},\EmpiiR\prth{t}}_{t\in\intervalleff{0}{T}} =
\prtH{v\prth{t,\X}d\X, \; u\prth{t,x}dx}_{t\in\intervalleff{0}{T}}
\; \miDD \;
\prth{v,u} ~ \text{satisfies condition \ref{ItWeak1}}
}.
$}
$$
Then we have the following proposition.

\begin{proposition}[Identification of the limit sets]\label{PropEnrgEstim}
Let
$\ProbaQInfty$  be a limit point of the sequence $\prth{\ProbaQN}_{N\geq 2}$. Then, 
\begin{equation}
\label{EqQinftyLoadsW1} 
\ProbaQInfty \prth{\SolSetONE} = 1.
\end{equation}
\end{proposition}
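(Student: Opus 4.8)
The plan is to establish the Sobolev regularity of $v$ by the energy estimate of Kipnis and Landim \cite[Chapter 5]{KipnisScaling99} (see also \cite{BaldassoExclusion17} for the boundary-driven case), adapted to our dynamics; the road component costs nothing. Indeed, by Proposition \ref{PropAbsContLeb} one may work on $D^{0}\prth{\intervalleff{0}{T};\Meas}$, on which $0\leq u\prth{t,\cdot}\leq 1$ pointwise and $\Road$ is compact, so that $u\in L^{2}\prth{0,T;L^{2}\prth{\Road}}$ automatically and only $v\in L^{2}\prth{0,T;{\Hb}^{1}\prth{\bulk}}$ remains. Fix a countable family $\prth{G^{m}}_{m\geq1}$, dense in a suitable sense, of $\mathbb{R}^{\dim}$-valued maps $G^{m}=\prth{G^{m}_{1},\dots,G^{m}_{\dim}}$ whose components lie in ${\Cb}^{1,2}_{c}\prth{\intervalleff{0}{T}\times\barbulktext}$, so in particular they vanish near $\Hfront\cup\Lfront$, and set, for the generic measure trajectory $\pi$,
\[
\Phi_{m}\prth{\pi}:=\sum_{q=1}^{\dim}\int_{0}^{T}\bk{\EmpiiF\prth{s},\partial_{e_{q}}G^{m}_{q}\prth{s}}{}\,ds-\sum_{q=1}^{\dim}\int_{0}^{T}\psb{G^{m}_{q}\prth{s}}{G^{m}_{q}\prth{s}}\,ds .
\]
A Riesz-type duality argument (see \cite[Chapter 5]{KipnisScaling99}) shows that $v\in L^{2}\prth{0,T;{\Hb}^{1}\prth{\bulk}}$ as soon as $\sup_{m\geq1}\Phi_{m}\prth{\pi}<+\infty$; hence \eqref{EqQinftyLoadsW1} follows once one proves the existence of a constant $K_{0}$, independent of $k$, with $\mathbb{E}_{\ProbaQInfty}\croch{\max_{m\leq k}\Phi_{m}}\leq K_{0}$ for every $k$.

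Second, I would transfer this to a computation under $\ProbaPN$. For fixed $k$ the map $\pi\mapsto\max_{m\leq k}\Phi_{m}\prth{\pi}$ is continuous on $D\prth{\intervalleff{0}{T};\Meas}$ and bounded below, so a truncation together with the portmanteau theorem and weak convergence along the subsequence defining $\ProbaQInfty$ gives $\mathbb{E}_{\ProbaQInfty}\croch{\max_{m\leq k}\Phi_{m}}\leq\liminf_{N}\EsperanceMuN\croch{\max_{m\leq k}\Phi_{m}}$. Next, apply the entropy inequality relative to the Bernoulli$\prth{1/2}$ product $\nu_{N}$ on $\SpaceState$ (for which $\LBulkF$ and $\LBulkR$ are reversible), using the uniform bound $\log\prth{\#\SpaceState}\leq 2N^{\dim}\log 2$, the inequality $e^{\max_{m\leq k}a_{m}}\leq\sum_{m\leq k}e^{a_{m}}$, and the Feynman--Kac formula (cf. \cite[Appendix 1]{KipnisScaling99}). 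After optimizing the entropy parameter $B>0$ (fixed below), this bounds $\EsperanceMuN\croch{\max_{m\leq k}\Phi_{m}}$ by $B^{-1}\,2\log 2+B^{-1}N^{-\dim}\log k+B^{-1}N^{-\dim}\int_{0}^{T}\lambda_{N}\prth{s}\,ds+o(1)$, where $\lambda_{N}\prth{s}$ is the Rayleigh--Ritz supremum, over densities $f$ with respect to $\nu_{N}$, of $\bk{\widetilde{V}^{m}_{s},f}{\nu_{N}}+\bk{\LN^{\textnormal{sym}}\sqrt{f},\sqrt{f}}{\nu_{N}}$, with $\widetilde{V}^{m}_{s}$ the (order-$N^{\dim}$) function of the configuration arising from $BN^{\dim}$ times the integrand of $\Phi_{m}$.

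Third comes the core estimate of $\lambda_{N}\prth{s}$. One has $\bk{\LN^{\textnormal{sym}}\sqrt{f},\sqrt{f}}{\nu_{N}}\leq-N^{2}\bk{-\LBulkF\sqrt{f},\sqrt{f}}{\nu_{N}}+CN^{\dim}$, the $CN^{\dim}$ coming solely from the exchange and reservoir parts $N\LRobin+\LReaction+\LNeumann$, which carry prefactors $N$ and $1$ instead of $N^{2}$ and are bounded operators. Since $G^{m}$ vanishes near $\frontN$, a discrete summation by parts (with no boundary contribution) turns the linear-in-$f$ part of $\bk{\widetilde{V}^{m}_{s},f}{\nu_{N}}$ into an expression built from the bulk discrete gradients $\eta\mapsto N\prth{\eta\prth{\I}-\eta\prth{\I-e_{q}}}$; combining the identity $\bk{\prth{\eta\prth{\I}-\eta\prth{\I-e_{q}}}f}{\nu_{N}}=\tfrac12\bk{\prth{\eta\prth{\I}-\eta\prth{\I-e_{q}}}\prth{f-f\circ\tau}}{\nu_{N}}$ (with $\tau$ the $\nu_{N}$-preserving transposition of the occupations at $\I$ and $\I-e_{q}$) with Young's inequality bounds this by $N^{2}\bk{-\LBulkF\sqrt{f},\sqrt{f}}{\nu_{N}}$ (absorbed by the Dirichlet form above) plus a term at most $C'B^{2}N^{\dim}\sum_{q}\psb{G^{m}_{q}\prth{s}}{G^{m}_{q}\prth{s}}$. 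Since the deterministic part of $\widetilde{V}^{m}_{s}$ contributes $-BN^{\dim}\sum_{q}\psb{G^{m}_{q}\prth{s}}{G^{m}_{q}\prth{s}}$, choosing $B$ so small that $C'B^{2}\leq B$ kills the $L^{2}$-norms of the test field and leaves $\lambda_{N}\prth{s}\leq CN^{\dim}+o(N^{\dim})$; plugging this back gives $\limsup_{N}\EsperanceMuN\croch{\max_{m\leq k}\Phi_{m}}\leq K_{0}:=B^{-1}\prth{2\log 2+CT}$, independent of $k$. Letting $k\to\infty$ and invoking monotone convergence yields $\mathbb{E}_{\ProbaQInfty}\croch{\sup_{m\geq1}\Phi_{m}}\leq K_{0}<+\infty$, whence \eqref{EqQinftyLoadsW1}.

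The main obstacle is the bookkeeping of this last step: carrying out the Young splitting so that the dangerous term is exactly dominated by the bulk Dirichlet form $N^{2}\bk{-\LBulkF\sqrt{f},\sqrt{f}}{\nu_{N}}$, while checking that the slowed-down exchange and reservoir generators only inflate the constant $K_{0}$ and never the order in $N$ --- and it is precisely the compact support of the test vector fields that renders those boundary mechanisms inert here. Unlike the identification of the limiting exchange conditions performed later (Section \ref{SS44_load_solutions} and Section \ref{S5Uniqueness}), this regularity step therefore does not really feel the novelty of coupling two domains of different dimensions.
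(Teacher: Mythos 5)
Your proof is correct and follows essentially the same route as the paper's: reduce to the energy estimate for $v$, pass to $\ProbaPN$ via continuity in the Skorokhod topology and Portmanteau, then combine the entropy inequality with respect to a Bernoulli product reference measure, the Feynman--Kac/Rayleigh quotient bound, a discrete summation by parts (made boundary-free by compact support), the exchange-of-occupations identity under the $\nu_N$-invariant transposition, and Young's inequality with a parameter tuned to absorb the Dirichlet form $N^{2}\DBulkF$. The only cosmetic divergences from the paper are that you parametrize by a dense family of $\mathbb{R}^{p}$-valued vector fields rather than treating each coordinate direction $q$ separately, and that you take the Bernoulli$(1/2)$ reference measure instead of Bernoulli$(b)$; the latter means $\LNeumann$ is no longer reversible for $\nu_{N}$, so its quadratic form picks up an extra $O\prth{N^{\dim-1}}$ contribution rather than being nonpositive as in \hyperref[ITDidi5]{(D5)}, but this is subdominant after dividing by $N^{\dim}$ and so harmless — just be aware the paper's choice $\gamma=b$ is what makes \hyperref[ITDidi5]{(D5)} exact.
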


The function $u\prth{t,\point}$ being in $L^{2}\prth{\Road}$ directly follows from the simple exclusion rule which forces $u$ to be positive and bounded by $1$.
On the other hand, $v\prth{t,\point}$ being in ${\Hb}^{1}\prth{\bulk}$ comes from the Riesz representation theorem combined with the following energy estimate.

\begin{lemma}[Energy estimate]\label{LeEnrgEstim}
Given $v \in L^{2} \prth{0,T ; L^{2}\prth{\bulk}}$ and $1\leq q \leq p$, consider the (potentially infinite) quantity
\begin{equation}\label{EQ_Energy}
E_{q}\prth{v} : =
\sup\limits_{G\in {\Cb}_{c}^{0,2} \prth{\intervalleff{0}{T} \times \scalebox{0.66}{$\displaystyle \barbulk $}}}
\accOOO{
	\int_{0}^{T}
	\Psb{v\prth{s}}{\partial_{e_{q}}G\prth{s}}
	ds -
	\frac{1}{2}
	\int_{0}^{T}
	\vertii{G\prth{s}}{L^{2}\prth{\bulk}}^{2} \,
	ds
}.
\end{equation}
Then, for any $q \in \intervalleE{1}{p}$,
$$
\ProbaQInfty
\prtHHH{
	\prtH{\EmpiiF\prth{t},\EmpiiR\prth{t}}_{t\in\intervalleff{0}{T}} =
	\prtH{v\prth{t,\X}d\X, \; u\prth{t,x}dx}_{t\in\intervalleff{0}{T}}
	\; \miDD \;
	E_{q}\prth{v} < + \infty
} = 1.
$$
\end{lemma}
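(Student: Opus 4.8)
The plan is to follow the classical entropy/Dirichlet‑form route to energy estimates, as in \cite[Chapter~5]{KipnisScaling99}, adapted to the generator $\LN$. I would begin by noting that finiteness of the supremum defining $E_q(v)$ does not depend on the positive constant placed in front of $\int_0^T\vertii{G(s)}{L^2(\bulk)}^2\,ds$: for fixed $v\in L^2(0,T;L^2(\bulk))$, an integration by parts (the test functions have compact support in $\bulk$) and the Riesz representation theorem give, for every $\kappa>0$, that $\sup_G\bigl\{\int_0^T\Psb{v(s)}{\partial_{e_q}G(s)}\,ds-\kappa\int_0^T\vertii{G(s)}{L^2(\bulk)}^2\,ds\bigr\}$ equals $\tfrac1{4\kappa}\vertii{\partial_{e_q}v}{L^2((0,T)\times\bulk)}^2$ when $\partial_{e_q}v\in L^2$ and $+\infty$ otherwise; hence it suffices to prove the lemma with $\tfrac12$ replaced by the specific constant $\kappa_0:=\tfrac1{2d}$ that the computation below will produce. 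Next I would fix a countable $H^1$‑dense family $(G_i)_{i\ge1}\subset\Cb^{0,2}_c([0,T]\times\barbulk)$ (so that the supremum over this family agrees with the one over all admissible test functions) and set, for a path $\pi=(\pi^{\textnormal{field}}_\bullet,\pi^{\textnormal{road}}_\bullet)$, $\Phi_k(\pi):=\max_{1\le i\le k}\bigl\{\int_0^T\bk{\pi^{\textnormal{field}}(s),\partial_{e_q}G_i(s)}{}\,ds-\kappa_0\int_0^T\vertii{G_i(s)}{L^2(\bulk)}^2\,ds\bigr\}$, which is a bounded continuous functional on $D([0,T];\Meas)$ (boundedness since $|\bk{\pi^{\textnormal{field}}(s),\partial_{e_q}G_i(s)}{}|\le\vertii{\partial_{e_q}G_i}{L^\infty(\bulk)}$, continuity since $\pi\mapsto\int_0^T\bk{\pi(s),\varphi(s)}{}\,ds$ is Skorokhod‑continuous for continuous $\varphi$). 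Using weak convergence of $(\ProbaQN)$ along a subsequence to $\ProbaQInfty$, Proposition~\ref{PropAbsContLeb} (to replace $\pi^{\textnormal{field}}(s)$ by the density $v(s,\cdot)$ under $\ProbaQInfty$), and then monotone convergence in $k$, the whole lemma reduces to the single bound $\sup_{k\ge1}\limsup_{N\to\infty}\EsperanceMuN[\Phi_k(\Empi)]<+\infty$.

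To obtain this, I would take $\nu_N$ to be the uniform (product Bernoulli$(1/2)$) measure on $\SpaceState$. Because $\nu_N$ charges every configuration equally and $\SpaceState$ has exactly $N^p$ sites, $H(\mu_N\mid\nu_N)\le N^p\log 2$ for an \emph{arbitrary} sequence $(\mu_N)$, so no entropy hypothesis on the initial data is needed. The entropy inequality, combined with $e^{N^p\Phi_k}\le\sum_{i\le k}e^{N^p(\cdot)_i}$ and the fact that the $\vertii{G_i(s)}{L^2(\bulk)}^2$ are deterministic, gives
\[
\EsperanceMuN[\Phi_k(\Empi)]\le\log 2+\frac{\log k}{N^p}+\max_{1\le i\le k}\Bigl\{-\kappa_0\!\int_0^T\!\vertii{G_i(s)}{L^2(\bulk)}^2\,ds+\frac1{N^p}\log\mathbb E_{\nu_N}\!\Bigl[\exp\Bigl(N^p\!\int_0^T\!\bk{\EmpiF(s),\partial_{e_q}G_i(s)}{}\,ds\Bigr)\Bigr]\Bigr\},
\]
and by the Feynman–Kac formula together with the Rayleigh–Ritz variational formula for the top eigenvalue, the last logarithm is at most $N^p\int_0^T\sup_f\bigl\{\langle\bk{\EmpiF(s),\partial_{e_q}G(s)}{},f\rangle_{\nu_N}-N^{-p}\bk{\sqrt f,(-\LN)\sqrt f}{\nu_N}\bigr\}\,ds$, the supremum being over densities $f\ge0$ with $\bk{f}{\nu_N}=1$.

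The crux — and the step I expect to be the main obstacle — is to bound this inner supremum by $\kappa_0\vertii{G(s)}{L^2(\bulk)}^2+o(1)$. I would write $\bk{\EmpiF(s),\partial_{e_q}G(s)}{}=N^{-p}\sum_{z\in\bulkN}\eta(z)\,\partial_{e_q}G(s,z/N)$, perform a discrete summation by parts (Taylor‑expanding $\partial_{e_q}G$ to within $O(1/N)$) to rewrite it as $-N^{-(p-1)}\sum_z\bigl(\eta(z)-\eta(z-e_q)\bigr)G(s,z/N)+O(1/N)$, and, bond by bond (for $b=\{z-e_q,z\}$), use the reversibility of $\nu_N$ under the swap $\sigma^b$ to symmetrize each summand against $f$ as $\tfrac12(\eta(z-e_q)-\eta(z))(\sqrt f-\sqrt{f\circ\sigma^b})(\sqrt f+\sqrt{f\circ\sigma^b})$. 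A weighted Young inequality with parameter $2dN$ then bounds this by $\tfrac d2 N^{2-p}\bk{(\sqrt{f\circ\sigma^b}-\sqrt f)^2}{\nu_N}$ plus $\tfrac1{2d}N^{-p}G(s,z/N)^2$, using $(\eta(z-e_q)-\eta(z))^2\le1$, $(\sqrt a+\sqrt c)^2\le2(a+c)$, the $\sigma^b$‑invariance of $\nu_N$, and $\bk{f}{\nu_N}=1$. Summing over the $O(N^p)$ bonds, the first group is exactly absorbed by the portion of $N^{-p}\bk{\sqrt f,(-\LN)\sqrt f}{\nu_N}$ coming from $N^2\LBulkF$, while the second is a Riemann sum equal to $\tfrac1{2d}\vertii{G(s)}{L^2(\bulk)}^2+o(1)$. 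I would then verify that the other pieces of $\LN$ cause no trouble: the road‑diffusion Dirichlet form is nonnegative and may simply be discarded, whereas the reservoir, Robin and reaction parts are supported on only $O(N^{p-1})$ sites and hence contribute $o(N^p)$ to the Dirichlet form, i.e.\ $o(1)$ to the bracket after dividing by $N^p$ — this order‑of‑magnitude bookkeeping over the five terms of \eqref{GenGathered} is the genuinely new point here compared with \cite{KipnisScaling99,BaldassoExclusion17}, and the part I would treat most carefully. Once this is in place, plugging back into the displayed inequality makes $\max_{1\le i\le k}\bigl\{-\kappa_0\int_0^T\vertii{G_i(s)}{L^2(\bulk)}^2\,ds+\tfrac1{2d}\int_0^T\vertii{G_i(s)}{L^2(\bulk)}^2\,ds+o(1)\bigr\}=\max_{1\le i\le k}o(1)\to0$, so $\limsup_N\EsperanceMuN[\Phi_k(\Empi)]\le\log 2$ uniformly in $k$; unwinding the reduction then yields $E_q(v)<+\infty$ $\ProbaQInfty$‑almost surely.
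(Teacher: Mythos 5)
Your route is the same as the paper's: a countable dense family, the entropy inequality relative to a Bernoulli product measure, $e^{\max}\le\sum e^{\cdot}$, Feynman--Kac plus the Rayleigh variational formula, a discrete summation by parts, a weighted Young inequality to make the bulk Dirichlet form absorb the test‑function term, and then discarding/bounding the remaining pieces of $\LN$. Two things you do differently are legitimate variations: you rescale the constant in front of $\Vert G\Vert^2$ to $\kappa_0=\frac{1}{2d}$ up front (the paper keeps $\frac{1}{2}$ and tunes the entropy parameter $a=d/4$ at the end instead), and you take $\gamma=\frac{1}{2}$ while the paper takes $\gamma=b$ so that Lemma~\ref{LeCtrlDiriForm}\hyperref[ITDidi5]{(D5)} gives the clean nonpositive identity for $\LNeumann$; with $\gamma=\frac12$ one only gets $|\bk{\LNeumann\sqrt f,\sqrt f}{\nu_N}|=O(N^{p-1})$, but that is still $o(N^p)$ so it is harmless.

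The one genuine error is in your order‑of‑magnitude bookkeeping for the exchange dynamics: you assert that the Robin part, being supported on $O(N^{p-1})$ sites, ``contributes $o(N^p)$ to the Dirichlet form, i.e.\ $o(1)$ to the bracket after dividing by $N^p$.'' This overlooks the extra factor $N$ multiplying $\LRobin$ in \eqref{GenGathered}. By \hyperref[ITDidi3]{(D3)}, $\bk{\LRobin\sqrt f,\sqrt f}{\nu_N}=-\tfrac12 I^{\mathrm{Rob}}_N(f,\nu_N)+\BadasseEpsilon_N(\alpha,\gamma,f)$; the nonpositive piece is discarded, but $|\BadasseEpsilon_N|\le c(\gamma)\alpha N^{p-1}$, so after the $N$ prefactor and dividing by $aN^p$ you are left with $c(\gamma)\alpha/a$ — a fixed constant, not $o(1)$. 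Your claimed bound $\limsup_N\EsperanceMuN[\Phi_k(\Empi)]\le\log 2$ is therefore too strong; the correct conclusion is $\limsup_N\EsperanceMuN[\Phi_k]\le \log 2 + c(\gamma)\alpha/a$, exactly mirroring the paper's $\frac{C_0+c(b)\alpha}{a}$ in \eqref{EQ_Energy_8}--\eqref{EQ_Energy_9}. Since any finite constant suffices to conclude $E_q(v)<+\infty$ a.s., your proof still reaches the stated lemma, but the step you flagged as ``the part I would treat most carefully'' is precisely where the extra $O(\alpha/a)$ term must be tracked rather than dismissed as $o(1)$.
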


A proof of Lemma \ref{LeEnrgEstim} is given in Appendix \ref{Appendix_Energy}.

\subsection{Replacement lemmas}\label{SS43_Replacement_LE}

Before to proceed with the limit equations satisfied by the densities loaded by the limit points of $\prth{\ProbaQN}_{N\geq 2}$, we need to state two Replacement lemmas to correctly ensure the convergence of the boundary terms of the martingale $\Martin$ towards those of the weak formulation in \hyperref[ItWeak2]{(W2)} --- see subsection \eqref{SS44_load_solutions} for details. 
The essence of these lemmas lies in comparing the occupancy status of $\eta$ at the boundary sites to the average number of particles in their immediate vicinity.
To state such a result, we must define these \lq\lq substitute objects''.
For fixed $\I\in\frontN$ and $\varepsilon>0$, let
\begin{equation}\label{EqDefLambdaBox}
\Lambda_{\Is}^{\varepsilon N} : =
\accOO{
\K \in \bulkN :
\verti{\I-\K} \leq \varepsilon N
} = 
\bulkN \cap \accO{\I + \intervalleff{-\varepsilon N}{\varepsilon N}^{\dim}},
\end{equation}
and define for $\eta\in\SpaceStateF$ the average number of particle of $\eta$ inside the box $\Lambda_{\Is}^{\varepsilon N}$, that is
\begin{equation}\label{EqDefEtaBox}
\eta^{\varepsilon N} \prth{\I} : =
c_{N,\varepsilon}
\sum\limits_{\Ks\in \Lambda_{\scalebox{0.6}{$\displaystyle \I $}}^{\varepsilon N}}^{}
\eta\prth{\K},
\end{equation}
where $c_{N,\varepsilon}$ is the number of sites inside $\Lambda_{\Is}^{\varepsilon N}$, namely,
\begin{equation}\label{EqEtaEpsN}
c_{N,\varepsilon} : = 
\frac{1}{\verti{\Lambda_{\Is}^{\varepsilon N}}} =
\crocHH{\prtH{2\lfloor \varepsilon N \rfloor+1}^{p-1}
\times
\prtH{\lfloor \varepsilon N \rfloor+1}}^{-1}.
\end{equation}

The upper Replacement lemma is the following.

\begin{lemma}[Replacement at the upper boundary]\label{LeRemplacUP}
Let
$\prth{\mu_{N}}_{N \geq 2}$
be a sequence of initial measures on
$\SpaceState$.
For any $t\in\intervalleff{0}{T}$, and any test functions
$G\in{\Cb}^{1,2}\prth{\intervalleff{0}{T} \times \barbulk}$,
we have
\begin{equation}\label{EqLimRemplacUP}
\limsup\limits_{\varepsilon \to 0} \;
\limsup\limits_{N \to \infty} \;
\EsperanceMuN
\crocHHHH{
	\vertIII{
		\int_{0}^{t}
		\frac{1}{N^{\dim-1}}
		\sum\limits_{\Is\in\HfrontN}^{}
		G\prth{s,\sfrac{\I}{N}}\Croch{\eta_{s}^{\varepsilon N}\prth{\I}-\eta_{s}\prth{\I}}
		ds
	}
}
= 0.
\end{equation}
\end{lemma}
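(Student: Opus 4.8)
The goal is a one-block \emph{replacement estimate} at the upper (Neumann) boundary: to show that replacing $\eta_s(\I)$ by its local empirical average $\eta_s^{\varepsilon N}(\I)$ over the box $\Lambda_{\Is}^{\varepsilon N}$ produces a negligible error, in the $\limsup_{\varepsilon\to 0}\limsup_{N\to\infty}$ sense, once summed against the test function $G$ over $\HfrontN$ and integrated in time. The strategy is the classical ``one-block + two-blocks'' path of \cite[Chapter 5]{KipnisScaling99}, adapted to the present boundary geometry. First I would set up the Dirichlet-form/entropy machinery: denote by $\nu_N^\rho$ a product reference measure (Bernoulli of some density $\rho\in(0,1)$, e.g.\ $\rho=1/2$, or the invariant-like measure suggested by the reservoir parameter $b$) on $\SpaceState$, control the relative entropy $\entropie(\mu_N\,|\,\nu_N^\rho)\le C N^{p}$ (which holds automatically since the state space has $O(N^{p})$ sites and entropy is at most logarithm of the cardinality), and recall the standard bound: for any functional $\Phi_s$ of the configuration and any $\gamma>0$,
\begin{equation*}
\EsperanceMuN\!\croch{\,\Big|\int_0^t \Phi_s\,ds\Big|\,}
\le \frac{\entropie(\mu_N\,|\,\nu_N^\rho)}{\gamma N^{p}}
+ \frac{1}{\gamma N^{p}}\log \EsperanceNuN\!\croch{\exp\Big(\gamma N^{p}\Big|\int_0^t \Phi_s\,ds\Big|\Big)},
\end{equation*}
followed by Feynman--Kac to bound the exponential moment by $t$ times the top eigenvalue of $\gamma N^{p}\Phi + \LN^{*}$, which in turn is estimated by a variational (Rayleigh) formula against the Dirichlet form $\Dc_N(f)=\langle -\LN \sqrt f,\sqrt f\rangle_{\nu_N^\rho}$.

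\textbf{Reduction to a static estimate.} The key point is that the diffusive part $N^2\LBulkF$ contributes a Dirichlet form of order $N^2$ times $\sum_{|\Is-\Ks|=1}\big(\sqrt{f(\eta^{\Is,\Ks})}-\sqrt{f(\eta)}\big)^2$, which is exactly the ``large'' term one exploits: for the boundary functional $\Phi_s = N^{-(p-1)}\sum_{\Is\in\HfrontN} G(s,\Is/N)\,[\eta_s^{\varepsilon N}(\I)-\eta_s(\I)]$, after the Feynman--Kac step one is led to bound, uniformly in the density profile, a quantity of the form
\begin{equation*}
\frac{1}{N^{p-1}}\sum_{\Is\in\HfrontN}\sup_{f}\left\{ \int G(\Is/N)\,[\eta^{\varepsilon N}(\I)-\eta(\I)]\,f\,d\nu_N^\rho - \frac{N^2}{A}\,\Dc_N^{(\Is)}(f)\right\},
\end{equation*}
where $\Dc_N^{(\Is)}$ is the part of the Dirichlet form involving bonds inside $\Lambda_{\Is}^{\varepsilon N}$ and $A$ is a constant absorbing $\gamma$, $\|G\|_\infty$ and $d$. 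Since the box has side $\varepsilon N$, a standard one-block argument (moving a particle from site $\I$ to anywhere in $\Lambda_{\Is}^{\varepsilon N}$ costs at most $\varepsilon N$ nearest-neighbour swaps, each controlled by a Young inequality paired against a term of the Dirichlet form) shows this supremum is $O(\varepsilon)$ per boundary site, uniformly in $N$; summing over the $O(N^{p-1})$ sites of $\HfrontN$ and dividing by $N^{p-1}$ leaves an $O(\varepsilon)$ bound, which vanishes as $\varepsilon\to 0$. The entropy term $\entropie(\mu_N\,|\,\nu_N^\rho)/(\gamma N^{p})$ is $O(1/\gamma)$, so one first sends $N\to\infty$, then $\varepsilon\to 0$, and finally (if a stray $1/\gamma$ remains) $\gamma\to\infty$ — but in fact the clean formulation of the lemma only needs $\limsup_\varepsilon\limsup_N$, so a fixed large $\gamma$ suffices together with the $O(\varepsilon)$ gain.

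\textbf{Main obstacle.} The delicate point is that $\HfrontN$ is a boundary slab, so the boxes $\Lambda_{\Is}^{\varepsilon N}$ for $\I\in\HfrontN$ are \emph{half-boxes} protruding into the bulk (they are cut off by the Neumann face $y=N-1$), and the reservoir generator $\LNeumann$ acts precisely there. One must check that (i) the one-block move stays inside $\bulkN$ — which is fine since $\Lambda_{\Is}^{\varepsilon N}\subset\bulkN$ by definition \eqref{EqDefLambdaBox} — and (ii) the contribution of $\LNeumann$ and of the slowed exchange generators $N\LRobin+\LReaction$ to the relevant Dirichlet form is negligible compared to the $N^2$-scale bulk term, so they do not spoil the variational bound; this is where the slow scaling ($N$ and $1$ versus $N^2$) is used crucially. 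A secondary technical nuisance is that the normalisation $c_{N,\varepsilon}$ in \eqref{EqEtaEpsN} counts sites of the \emph{truncated} box, so one should verify $|\Lambda_{\Is}^{\varepsilon N}|\asymp(\varepsilon N)^{p}$ uniformly for $\I\in\HfrontN$ (true, up to a dimensional constant, since only one coordinate is truncated and only by a factor $2$). Modulo these checks, the argument is the textbook one-block estimate; I expect no genuinely new difficulty beyond bookkeeping the half-box near the Neumann face, and I would relegate the fully detailed computation to the same appendix style as the energy estimate.
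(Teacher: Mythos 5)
Your strategy matches the paper's in its essentials: entropy inequality against a Bernoulli product reference measure, Feynman--Kac with the Rayleigh variational formula, and a Young inequality paired against the $N^2$-weighted bulk Dirichlet form, with the box size $\varepsilon N$ producing the $O(\varepsilon)$ bound after the cancellation of Dirichlet forms. The paper however inserts a preliminary reduction that you skip and that simplifies the moving-particle step considerably: exploiting the periodicity of $\TT_N^{p-1}$ together with a first-order Taylor expansion of $G$, it strips away the horizontal ($x$-direction) differences as elementary $O(\varepsilon)\,\|\nabla G\|_{\infty}$ terms \emph{before} the entropy/Feynman--Kac step, so the only quantity that needs the Dirichlet form is the purely vertical telescope $\eta(i,\ell)-\eta(i,N-1)=\sum_{m=\ell}^{N-2}[\eta(i,m)-\eta(i,m+1)]$ over the $y$-bonds, indexed simply by $i$; your full $p$-dimensional canonical-path version would also work, but it requires the path-overlap bookkeeping that the paper entirely avoids. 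Two smaller corrections. First, ``a fixed large $\gamma$ suffices together with the $O(\varepsilon)$ gain'' is not right: after $\limsup_{\varepsilon}\limsup_{N}$ the entropy contribution $O(1/\gamma)$ survives, and one must still send $\gamma\to\infty$ at the very end (the paper's $a\to\infty$). Second, the paper fixes the reference Bernoulli density equal to the reservoir rate $b$, which makes $\LNeumann$ reversible for $\nu_N$ so that its contribution to the Rayleigh quotient is exactly nonpositive with no error term; you list that choice only as an option, and while a generic $\rho$ would still work (the extra error is $O(1/(\gamma N))$ and vanishes with $N$), the genuinely non-reversible contributions are those of $\LRobin$ and $\LReaction$, whose $O(1/\gamma)$ error terms are where the slow scaling you correctly identify is really used.
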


\refstepcounter{FIGURE}\label{FigRemplacUp}
\begin{center}
\includegraphics[scale=1]{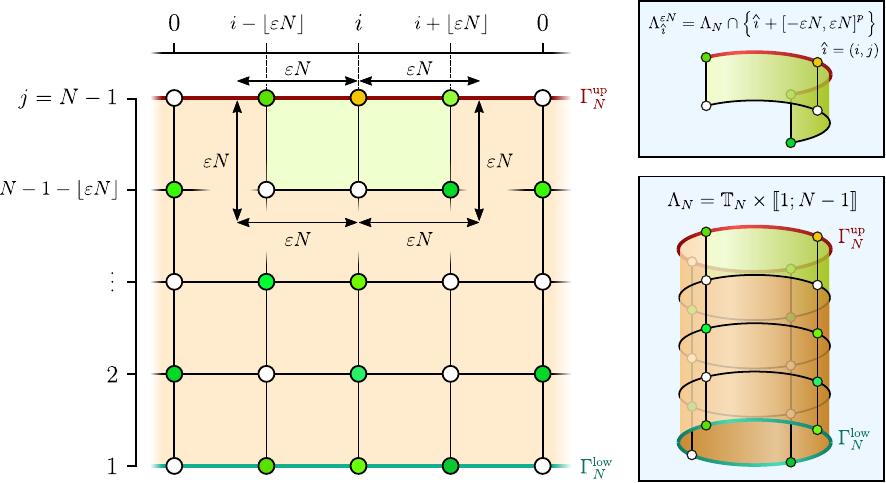}\\[5mm]
\begin{minipage}[c]{145mm}
\textsl{\textbf{\footnotesize{Figure \theFIGURE ~ --- The upper Replacement lemma.}}
\begin{footnotesize}
For $\I=\prth{i,j}\in\HfrontN$, we define $\Lambda_{\Is}^{\varepsilon N}$ as the intersection between $\bulkN$ and the $\dim$-dimensional box of range $2\varepsilon N$ centred on $\I$.
Notice that for fixed $\varepsilon >0$, the size of the box remains constant at the macroscopic scale.
Lemma \ref{LeRemplacUP} establishes that,
as $N\to\infty$ followed by $\varepsilon\to 0$,
the value of
$\eta\prth{\I}$ can be replaced
by $\eta^{\varepsilon N}\prth{\I}$
which is the mean value of $\eta$ inside $\Lambda_{\Is}^{\varepsilon N}$.
\end{footnotesize}
}
\end{minipage}
\end{center}

Similarly, the lower Replacement lemma is the following.

\begin{lemma}[Replacement at the lower boundary]\label{LeRemplacLOW}
Let
$\prth{\mu_{N}}_{N \geq 2}$
be a sequence of initial measures on
$\SpaceState$.
For any $t\in\intervalleff{0}{T}$, and any test functions
 $G\in{\Cb}^{1,2}\prth{\intervalleff{0}{T} \times \barbulk}$,
we have
\begin{equation}\label{EqLimRemplacLOW}
\limsup\limits_{\varepsilon \to 0} \;
\limsup\limits_{N \to \infty} \;
\EsperanceMuN
\crocHHHH{
	\vertIII{
		\int_{0}^{t}
		\frac{1}{N^{\dim-1}}
		\sum\limits_{\Is\in\LfrontN}^{}
		G\prth{s,\sfrac{\I}{N}}\Croch{\eta_{s}^{\varepsilon N}\prth{\I}-\eta_{s}\prth{\I}}
		ds
	}
}
= 0.
\end{equation}
\end{lemma}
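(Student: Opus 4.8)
The plan is to establish Lemma~\ref{LeRemplacLOW} by the same mechanism as the upper replacement Lemma~\ref{LeRemplacUP}: an entropy/Feynman--Kac reduction in which the mixing is supplied by the fast field diffusion $N^{2}\LBulkF$, the only genuinely new ingredient being a check that the \emph{accelerated} exchange dynamics $N\,\LRobin$ at the lower boundary does not spoil the Dirichlet form estimate. \emph{Step 1 (reduction).} Let $\nu_{N}$ be the product of Bernoulli$(\tfrac12)$ measures over all sites of $\bulkN$ and $\LfrontN$. Since $\SpaceState$ is finite, $\mathrm{H}(\mu_{N}\,|\,\nu_{N})\leq N^{\dim}\log 2$ \emph{for every} sequence $(\mu_{N})_{N}$, which is exactly why the lemma needs no hypothesis on the initial data. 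Writing $V^{\varepsilon}_{s}(\eta):=\tfrac{1}{N^{\dim-1}}\sum_{\Is\in\LfrontN}G(s,\sfrac{\I}{N})\croch{\eta^{\varepsilon N}(\I)-\eta(\I)}$, the entropy inequality and the Feynman--Kac bound (\cite[Appendix~1]{KipnisScaling99}) give, for every $\theta>0$,
\begin{equation*}
\EsperanceMuN\crocHH{\vertI{\int_{0}^{t}V^{\varepsilon}_{s}(\eta_{s})\,ds}}\;\leq\;\frac{N^{\dim}\log 2}{\theta}\;+\;\frac{1}{\theta}\int_{0}^{t}\sup_{f}\accO{\theta\,\vertI{\bk{V^{\varepsilon}_{s},f}{\nu_{N}}}+\bk{\sqrt{f},\LN\sqrt{f}}{\nu_{N}}}ds,
\end{equation*}
the supremum being over densities $f$ with respect to $\nu_{N}$.

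\emph{Step 2 (the Dirichlet form of $\LN$ --- the crux).} Since $\nu_{N}$ is invariant under every elementary move, a change of variables shows that the reversible parts $N^{2}\LBulkF$ and $N^{2}\LBulkR$ contribute to $\bk{\sqrt f,\LN\sqrt f}{\nu_{N}}$ a term $-\mathfrak{D}_{N}(\sqrt f)\le 0$, where $\mathfrak{D}_{N}(\sqrt f):=\bk{\sqrt f,-N^{2}\LBulkF\sqrt f}{\nu_{N}}=\tfrac{N^{2}d}{2}\sum_{b}\bk{(\sqrt{f^{b}}-\sqrt f)^{2}}{\nu_{N}}\ge 0$ (the sum over nearest-neighbour bonds $b$ of $\bulkN$, $f^{b}$ denoting $f$ after swapping the occupations along $b$), together with a non-positive term coming from $\LBulkR$. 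Each single-site flip dynamics contributes at most $\tfrac12\sum_{\mathrm{sites}}\|\,r\circ(\mathrm{flip})-r\,\|_{\infty}$, $r$ being the relevant rate: for $N\,\LRobin$ each of the $N^{\dim-1}$ sites of $\LfrontN$ carries rate $N\alpha(\eta(i)-\xi(i))^{2}$, which an $\eta$-flip changes by exactly $N\alpha$, so its contribution is $\tfrac12\alpha N^{\dim}$, and the analogous bounds for $\LReaction$ and $\LNeumann$ are $O(N^{\dim-1})$. Hence $\bk{\sqrt f,\LN\sqrt f}{\nu_{N}}\le -\mathfrak{D}_{N}(\sqrt f)+C_{1}N^{\dim}$ for some $C_{1}=C_{1}(\alpha,b)$; the point is that even after the $N$-acceleration this error sits at the same scale $N^{\dim}$ as the entropy, hence is harmless.

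\emph{Step 3 (telescoping and optimisation).} Write $\eta^{\varepsilon N}(\I)-\eta(\I)=c_{N,\varepsilon}\sum_{\Ks\in\Lambda^{\varepsilon N}_{\Is}}(\eta(\K)-\eta(\I))$ and, for each pair $(\I,\K)$, fix a monotone axis-aligned nearest-neighbour path in $\bulkN$ from $\I$ to $\K$ of length $\le\dim\,\varepsilon N$. Reorganising the resulting telescoping sums by bonds yields $V^{\varepsilon}_{s}=\sum_{b}\Phi_{b}(s)\,(\eta(\mathbf y'_{b})-\eta(\mathbf y_{b}))$ with deterministic coefficients satisfying $\|\Phi_{b}\|_{\infty}\lesssim N^{1-\dim}$ and vanishing unless $b$ lies within $\dim\,\varepsilon N$ of $\LfrontN$, whence $\sum_{b}\|\Phi_{b}\|_{\infty}^{2}\lesssim\varepsilon\,N^{2-\dim}$. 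For each bond, the exclusion identity $\bk{\Phi_{b}(\eta(\mathbf y'_{b})-\eta(\mathbf y_{b}))f}{\nu_{N}}=\tfrac12\bk{\Phi_{b}(\eta(\mathbf y'_{b})-\eta(\mathbf y_{b}))(\sqrt f-\sqrt{f^{b}})(\sqrt f+\sqrt{f^{b}})}{\nu_{N}}$ followed by Young's inequality gives, for any $\beta>0$, $\vertI{\bk{V^{\varepsilon}_{s},f}{\nu_{N}}}\le\tfrac{\beta}{2N^{2}d}\mathfrak{D}_{N}(\sqrt f)+\tfrac{1}{\beta}\sum_{b}\|\Phi_{b}\|_{\infty}^{2}$. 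Inserting Step~2 and this bound into Step~1 and choosing $\beta=2N^{2}d/\theta$ to cancel the Dirichlet form, one is left with $\EsperanceMuN\crocHH{\vertI{\int_{0}^{t}V^{\varepsilon}_{s}(\eta_{s})\,ds}}\le \tfrac{c_{1}N^{\dim}}{\theta}+\tfrac{c_{2}\,\varepsilon\,\theta}{N^{\dim}}$; the choice $\theta=N^{\dim}/\sqrt{\varepsilon}$ bounds this by $(c_{1}+c_{2})\sqrt{\varepsilon}$ uniformly in $N$, and letting $N\to\infty$ then $\varepsilon\to 0$ proves \eqref{EqLimRemplacLOW}.

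The main obstacle is precisely Step~2: unlike in the slow-boundary setting of \cite{BaldassoExclusion17}, the field--road exchange at $\LfrontN$ is \emph{sped up} by a factor $N$, so one must verify that its (nonreversible) contribution to the Dirichlet form estimate stays $O(N^{\dim})$ rather than $O(N^{\dim+1})$ --- which works only because a single flip alters the exchange rate by just $\alpha$. Everything else, including the telescoping combinatorics and the path-counting behind $\sum_{b}\|\Phi_{b}\|_{\infty}^{2}\lesssim\varepsilon N^{2-\dim}$, is a routine adaptation of the arguments in \cite{KipnisScaling99,BaldassoExclusion17}.
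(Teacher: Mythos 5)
Your proof is correct, and it rests on the same core machinery as the paper's proof of Lemma~\ref{LeRemplacUP}, which the paper explicitly says is ``essentially identical'' to the lower case: entropy inequality, Feynman--Kac bound, Young's inequality to trade the potential against the $\LBulkF$ Dirichlet form, and --- the genuinely sensitive point, which you correctly isolate --- the verification that the $N$-accelerated exchange $N\,\LRobin$ only perturbs the Dirichlet-form estimate at order $N^{\dim}$, the content of the paper's Lemma~\ref{LeCtrlDiriForm}~\textbf{(D3)}. The substantive difference is the decomposition of $\eta_{s}^{\varepsilon N}-\eta_{s}$ at the boundary: the paper (compare the split \eqref{EqPrReUP5}--\eqref{EqPrReUP7} in the upper case) first eliminates, using only the smoothness of $G$ and the periodicity of the discrete torus, the contributions coming from the tangential $(x)$ directions, so that only a one-column \emph{vertical} telescope has to be fed through the entropy/Dirichlet-form machinery; you instead telescope $\eta(\K)-\eta(\I)$ along axis-aligned nearest-neighbour paths in \emph{all} $\dim$ directions and run the whole sum through the variational bound. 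Your version is more symmetric and dispenses with the preliminary Mean-Value-Theorem/periodicity bookkeeping, at the cost of the path-counting estimate $\|\Phi_{b}\|_{\infty}\lesssim N^{1-\dim}$ and $\sum_{b}\|\Phi_{b}\|_{\infty}^{2}\lesssim\varepsilon\,N^{2-\dim}$, which is correct with a fixed ``vertical-then-lexicographic'' path convention (each bond is traversed by $O((\varepsilon N)^{\dim})$ pairs $(\I,\K)$, and only the $O(\varepsilon N^{\dim})$ bonds in the $\varepsilon N$-slab above $\LfrontN$ carry nonzero weight), but is less immediate than the paper's single-column sum. Two further, cosmetic, departures: the paper picks the Bernoulli reference parameter $\gamma=b$ so that $\LNeumann$ is exactly reversible (Lemma~\ref{LeCtrlDiriForm}~\textbf{(D5)}) and contributes no error at all, whereas your $\gamma=\tfrac12$ makes $\nu_{N}$ flip-invariant, so the error for every single-site flip dynamics is uniformly bounded by $\tfrac12\sum_{i}\|r_{i}\circ\mathrm{flip}_{i}-r_{i}\|_{\infty}$, at the price of a harmless extra $O(N^{\dim-1})$ from $\LNeumann$; and the paper fixes the entropy parameter, lets $N\to\infty$, then $\varepsilon\to0$, then the parameter diverge, whereas you take $\theta=N^{\dim}/\sqrt{\varepsilon}$ and obtain a bound $\lesssim\sqrt{\varepsilon}$ uniformly in $N$ in one stroke --- both orderings are legitimate since the constants are uniform in $N$.
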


The proof of the Lemma \ref{LeRemplacUP} is given in Appendix \ref{Appendix_Repl_LE}, while that of Lemma \ref{LeRemplacLOW} is omitted since it follows essentially identical arguments.

\subsection{The limit points of $\prth{\ProbaQN}_{N\geq 2}$ load paths whose densities satisfy \hyperref[ItWeak2]{\textbf{(W2)}}}\label{SS44_load_solutions}
We claim now that all the limit trajectories are supported on the set $\SolSetTWO$ of measures with density $\prth{v,u}$ that satisfy the weak formulation associated with \eqref{syst}-\eqref{data}, namely 
$$
\smaller{0.93}{$
\SolSetTWO : =
\accOOO{
\prtH{\EmpiiF\prth{t},\EmpiiR\prth{t}}_{t\in\intervalleff{0}{T}} =
\prtH{v\prth{t,\X}d\X, \; u\prth{t,x}dx}_{t\in\intervalleff{0}{T}}
\; \miDD \;
\prth{v,u} ~ \text{satisfies condition \ref{ItWeak2}}
}.
$}
$$
\begin{proposition}[Identification of the limit equations]\label{PropIdEqLim}
Let
$\ProbaQInfty$  be a limit point of the sequence $\prth{\ProbaQN}_{N\geq 2}$. Then, 
\begin{equation}
\label{EqQinftyLoadsW2} 
\ProbaQInfty \prth{\SolSetTWO} = 1.
\end{equation}
\end{proposition}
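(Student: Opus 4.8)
The plan follows the route of \cite{KipnisScaling99, BaldassoExclusion17}: starting from the martingale identities \eqref{EqMartinF}--\eqref{EqMartinR}, one shows that, along any subsequence realising a limit point $\ProbaQInfty$, each term converges $\ProbaQInfty$-almost surely to its counterpart in the weak formulation \eqref{EqWeakField}--\eqref{EqWeakRoad}, and then upgrades the identity from ``for a fixed triple $(G,H,t)$'' to ``for all $(G,H,t)$''. First I would discard the martingales: inserting the bounds $\verti{\MartinBF{G}(s)}=O(1/N^{\dim})$ and $\verti{\MartinBR{H}(s)}=O(1/N^{\dim-1})$ obtained in the proof of Proposition \ref{PropTight} into \eqref{EqTightSqrtB} (taken with $t=0$, $\theta=T$) yields $\EsperanceMuN\big[(\Martin(T))^2\big]=O(1/N^{\dim-1})$, so that by Doob's maximal inequality $\sup_{0\le t\le T}\verti{\Martin(t)}\to0$ in $L^{2}(\ProbaPN)$, hence in probability, and likewise for $\MartinF{G}$ and $\MartinR{H}$ separately.

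\textbf{Bulk terms.} By Taylor expansion the discrete operators \eqref{EqDiscrLap}--\eqref{EqDiscrDy} converge uniformly to their continuous analogues (errors $O(1/N^{2})$ for the Laplacians, $O(1/N)$ for the normal derivative), so the two field bulk sums in \eqref{EqMartinF} equal $\int_{0}^{t}\bk{\EmpiF(s),d\Delta G(s)}{}\,ds+o_N(1)$ --- replacing $\bulkN\setminus\frontN$ by $\bulkN$ in the $\partial_{yy}^{N}$ sum only discards $O(N^{\dim-1})$ sites, hence is negligible --- while the road bulk sum equals $\int_{0}^{t}\bk{\EmpiR(s),D\DeltaX H(s)}{}\,ds+o_N(1)$. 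The upper reservoir term in \eqref{EqMartinF} carries the prefactor $1/N^{\dim}$ against only $N^{\dim-1}$ summands, hence is $O(1/N)$ and simply vanishes --- this is precisely how the zero Neumann condition at $y=1$ is encoded microscopically (a faster reservoir would instead produce a nonzero boundary flux). Each surviving bulk term is a continuous functional of $\pi_N$ for the Skorokhod topology (for the end-point terms $\bk{\EmpiF(t),G(t)}{}$, $\bk{\EmpiR(t),H(t)}{}$ this holds for every $t$ outside the at most countable set of instants that are not $\ProbaQInfty$-a.s.\ continuity points of the limit path, which is harmless), so by the weak convergence $\ProbaQN\to\ProbaQInfty$ and Proposition \ref{PropAbsContLeb} these terms converge to the stated expressions in the densities $v$ and $u$.

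\textbf{Boundary terms (the crux).} The terms $\frac{1}{N^{\dim-1}}\sum_{\Is\in\HfrontN}d\partial_y^{N}G(s,\sfrac{\I}{N})\eta_s(\I)$, $\frac{1}{N^{\dim-1}}\sum_{\Is\in\LfrontN}d\partial_y^{N}G(s,\sfrac{\I}{N})\eta_s(\I)$ and $\frac{\alpha}{N^{\dim-1}}\sum_{i\in\RoadNN}G(s,\sfrac{i}{N})\eta_s(i)$ feature the occupation of $\eta$ at boundary sites, which is \emph{not} a continuous function of $\EmpiF$. Here I would first replace $d\partial_y^{N}G(s,\cdot)$ by the $N$-independent trace $d\partial_y G(s,\cdot)|_{\Hfront}$ (resp.\ $|_{\Lfront}$) up to an $O(1/N)$ error, then invoke Lemmas \ref{LeRemplacUP}--\ref{LeRemplacLOW} to substitute each $\eta_s(\I)$ by its box average $\eta_s^{\varepsilon N}(\I)$, the error vanishing in the iterated limit $\limsup_{N\to\infty}$ then $\limsup_{\varepsilon\to0}$. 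For fixed $\varepsilon$, a box-averaged boundary sum is, up to $o_N(1)$, a genuine test of the empirical measure --- schematically $\frac{1}{N^{\dim-1}}\sum_{\Is\in\HfrontN}\varphi(s,\sfrac{\I}{N})\eta_s^{\varepsilon N}(\I)=\bk{\EmpiF(s),\varphi(s,\cdot)\,\iota^{\mathrm{up}}_\varepsilon}{}+o_N(1)$ with $\iota^{\mathrm{up}}_\varepsilon(x,y)=\varepsilon^{-1}\indicatrice{y>1-\varepsilon}$, and analogous kernels $\iota^{\mathrm{low}}_\varepsilon,\iota^{\mathrm{road}}_\varepsilon$ for the two other terms --- so passing to the $\ProbaQInfty$-limit as above produces inner averages such as $\varepsilon^{-1}\!\int_{1-\varepsilon}^{1}v(s,x,y)\,dy$. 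Letting $\varepsilon\to0$, these converge in $L^{2}(\Road)$ to the traces $v(s,\cdot)|_{y=1}$ and $v(s,\cdot)|_{y=0}$ --- this is exactly where the regularity $v(s,\cdot)\in\Hb^{1}(\bulk)$ supplied by Proposition \ref{PropEnrgEstim} enters --- so the three terms converge to $\int_{0}^{t}\Psr{v|_{y=1}(s)}{d\partial_y G|_{y=1}(s)}\,ds$, $\int_{0}^{t}\Psr{v|_{y=0}(s)}{d\partial_y G|_{y=0}(s)}\,ds$ and $\alpha\int_{0}^{t}\Psr{v|_{y=0}(s)}{G|_{y=0}(s)}\,ds$. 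The $\xi$-part $\frac{\alpha}{N^{\dim-1}}\sum_i G(s,\sfrac{i}{N})\xi_s(i)$ of the exchange term is already continuous in $\EmpiR$ and gives $\alpha\int_{0}^{t}\Psr{u(s)}{G|_{y=0}(s)}\,ds$; the road martingale \eqref{EqMartinR} is treated identically, its exchange term splitting into a continuous $\xi$-part and an $\eta$-part handled by Lemma \ref{LeRemplacLOW}, producing $\alpha\int_{0}^{t}\Psr{v|_{y=0}(s)-u(s)}{H(s)}\,ds$.

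\textbf{Assembling, and the main obstacle.} Combining the three steps and matching signs (routine), for each fixed $(G,H,t)$ the relation $\MartinF{G}(t)\to0$ forces the limiting density $(v,u)$ to satisfy \eqref{EqWeakField} $\ProbaQInfty$-a.s., and $\MartinR{H}(t)\to0$ forces \eqref{EqWeakRoad}. To obtain \ref{ItWeak2} for all $(G,H,t)$ at once, fix countable families $(G_k)\subset\Cb^{1,2}(\intervalleff{0}{T}\times\barbulk)$ and $(H_k)\subset\Cb^{1,2}(\intervalleff{0}{T}\times\Lfront)$ dense for the norm controlling a function together with its first time- and second space-derivatives, and a countable dense set of times; intersect the corresponding countably many $\ProbaQInfty$-full events; then both sides of \eqref{EqWeakField}--\eqref{EqWeakRoad} are continuous in $(G,H)$ for that norm once the $\Hb^{1}$-regularity of $v$ (hence the continuity of its trace) is known from Proposition \ref{PropEnrgEstim}, and continuous in $t$, so the identities propagate to all $(G,H,t)$. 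This yields $\ProbaQInfty(\SolSetONE\cap\SolSetTWO)=1$, whence $\ProbaQInfty(\SolSetTWO)=1$. The genuine difficulty is the boundary step: it is the only place where weak-convergence continuity fails, and it requires chaining two ingredients of different natures --- the microscopic Replacement Lemmas, which exchange boundary occupations for mesoscopic box averages, and the $\Hb^{1}$ energy estimate, which lets those averages converge to honest traces --- all while keeping the powers of $N$ aligned so that the $N\alpha$-speeded exchange reproduces exactly the Robin coupling and the order-one reservoir reproduces exactly the Neumann condition.
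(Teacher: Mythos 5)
Your proposal is correct and follows essentially the same strategy as the paper's proof: discard the martingales via the quadratic-variation bounds and Doob's inequality, pass the bulk terms to the limit through weak convergence of $\ProbaQN$, and handle the boundary terms by chaining the Replacement Lemmas (boundary occupation $\to$ box average) with the $\Hb^1$ energy estimate of Proposition \ref{PropEnrgEstim} (box average $\to$ trace as $\varepsilon\to 0$). The only organizational difference is that the paper packages the whole argument around an explicitly defined $\varepsilon$-regularized functional $\Weak{\Empii}^{\varepsilon}$ built from the convolution kernels $\UnitUP_\varepsilon$ and $\UnitLOW_\varepsilon$, bounding $\ProbaQInfty(\verti{\Weak{u,v}}>\delta)$ by $\ProbaQInfty(\verti{\Weak{u,v}-\Weak{\Empii}^{\varepsilon}}>\delta/2)+\liminf_N\ProbaQN(\verti{\Weak{\Empi}^{\varepsilon}}>\delta/2)$ via Portmanteau and then decomposing $\verti{\Weak{\Empi}^{\varepsilon}-\Martin}$ into eight vanishing pieces, whereas you run the same comparison term by term; the mathematical content is identical.
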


To establish Proposition \ref{PropIdEqLim}, we need to demonstrate that each component of the martingale
$\Martin = \MartinF{G} + \MartinR{H}$,
as detailed in
\eqref{EqMartinF}-\eqref{EqMartinR},
converges towards its respective counterpart in the weak formulation \eqref{EqWeakField}-\eqref{EqWeakRoad},
and then that the martingale itself vanishes as $N\to\infty$.
This requires the two Replacement lemmas (Lemma \ref{LeRemplacUP} and Lemma \ref{LeRemplacLOW}),
which allow the substitution of terms not expressed through the empirical measure $\Empi$ in the martingale $\Martin$, and the identification of the limit sets (Proposition \ref{PropEnrgEstim}), that is essential to define the trace of the function $v\prth{t,\point}$ at the lower boundary.

\medskip

\begin{proof}[Proof of Proposition \ref{PropIdEqLim}]
	Let
	$G \in {\Cb}^{1,2} \prth{\intervalleff{0}{T} \times \barbulk}$ and
	$H \in {\Cb}^{1,2} \prth{\intervalleff{0}{T} \times \Road}$.
	For practical reasons, let us define here the functional
	$\Weak{u,v}\prth{t} : = \WeakF{u,v,G}\prth{t} + \WeakR{u,v,H}\prth{t}$
	associated with the weak formulation \eqref{EqWeakField}-\eqref{EqWeakRoad} in \ref{ItWeak2}:
	\begin{equation}\label{EqWeakFieldFunc}
		\scalebox{0.94}{$\displaystyle 	\begin{aligned}
				\hspace{-2mm}
				\WeakF{u,v,G}\prth{t} : = \Psb{v\prth{t}}{G\prth{t}} - \Psb{v_{0}}{G\prth{0}}
				&-  \int_{0}^{t} 
					\Psb{v\prth{s}}{\partial_{s}G\prth{s}} ds
					- \int_{0}^{t}\Psb{v\prth{s}}{d \Delta G\prth{s}}ds\\[3mm]
				&\hspace{-42.8mm}
					+ \int_{0}^{t} \Psr{v|_{y=1}\prth{s}}{d\partial_{y}G|_{y=1}\prth{s}}ds
					- \int_{0}^{t} \Psr{v|_{y=0}\prth{s}}{d\partial_{y}G|_{y=0}\prth{s}}ds\\[3mm]
				&\hspace{5mm}
					- \int_{0}^{t} \alpha \Psr{u\prth{s} - v|_{y=0}\prth{s}}{G|_{y=0}\prth{s}}
				\hspace{0mm} ds,	 
			\end{aligned} $}
		\end{equation}
	and
	\begin{equation}\label{EqWeakRoadFunc}
	\scalebox{0.94}{$\displaystyle 	\begin{aligned}
			\hspace{5mm}
			\WeakR{u,v,H}\prth{t} : = \Psr{u\prth{t}}{H\prth{t}} - \Psr{u_{0}}{H\prth{0}}
			&- \int_{0}^{t} 
				\Psr{u\prth{s}}{\partial_{s}H\prth{s}} ds \\[3mm]
			&\hspace{-55mm}- \int_{0}^{t} \Psr{u\prth{s}}{D\DeltaX H\prth{s}} ds
				- \int_{0}^{t} \alpha \Psr{v|_{y=0}\prth{s} - u\prth{s}}{H\prth{s}}
			\hspace{0mm} ds.
		\end{aligned} $}
	\end{equation}
	Given $\ProbaQInfty$ in the closure of the sequence $\prth{\ProbaQN}_{N\geq 2}$, the statement of Proposition \ref{PropIdEqLim} can then be reformulated as
	\begin{equation*}
	\scalebox{0.97}{$\displaystyle \ProbaQInfty
	\prtHHH{
	\prtH{\EmpiiF\prth{t},\EmpiiR\prth{t}}_{t\in\intervalleff{0}{T}} =
	\prtH{v\prth{t,\X}d\X, \; u\prth{t,x}dx}_{t\in\intervalleff{0}{T}}
	\; \miDD \;
	\WeakF{u,v,G}\prth{t} =
	\WeakR{u,v,H}\prth{t} = 0
	} = 1. $}
	\end{equation*}
	To prove this, it is sufficient to establish that of any $t\in \intervalleff{0}{T}$
	and $\delta>0$
	there holds
	\begin{equation}\label{EqPROOF4.3-0}
	\scalebox{1}{$\displaystyle 	\ProbaQInfty
		\prtHHH{
		\vertI{\Weak{u,v}\prth{t} } > \delta
		} = 0. $}
	\end{equation}
	At this point, to work with the probability measures $\ProbaQN$ (that do not load paths with densities) instead of $\ProbaQInfty$, we need to substitute the quantity $\Weak{u,v}\prth{t}$ with another one which only depends on measures $\EmpiiF$ and $\EmpiiR$ instead of $v$ and $u$.
	This substitution cannot be made directly because of the boundary terms in \eqref{EqWeakFieldFunc} and \eqref{EqWeakRoadFunc}.
	To overcome this issue, let us introduce the two families of unit approximations indexed by $\varepsilon>0$,
	$$
	\UnitUP_{\varepsilon} : = 
	\frac{1}{\varepsilon\prth{2\varepsilon}^{\dim -1}}
	\indicatrice{\intervalleff{-\varepsilon}{\varepsilon}^{\dim -1} \times \intervalleff{0}{\varepsilon}}
	\quad
	\text{and}
	\quad
	\UnitLOW_{\varepsilon} : = 
	\frac{1}{\varepsilon\prth{2\varepsilon}^{\dim -1}}
	\indicatrice{\intervalleff{-\varepsilon}{\varepsilon}^{\dim -1} \times \intervalleff{-\varepsilon}{0}},
	$$
	defined for all $x\in \mathbb{T}^{\dim -1}$ and $y\in \mathbb{R}$.
	Observe that $\eta^{\varepsilon N} \prth{\I}$ defined in \eqref{EqDefEtaBox} can then be rewritten
	for any $\I \in \frontN = \HfrontN \cup \LfrontN$,
	$\varepsilon>0$ and $N\geq 2$ as
	\begin{equation}\label{EqPROOF4.3-4}
		\eta^{\varepsilon N} \prth{\I} = 
		\tilde{c}_{N,\varepsilon} \,
		\croch{\EmpiF \ast \UnitUP_{\varepsilon}}\prth{{\I}/{N}},
		\qquad
		\text{if }\I \in \HfrontN,
	\end{equation}
	and
	\begin{equation}\label{EqPROOF4.3-5}
	\eta^{\varepsilon N} \prth{\I} = 
	\tilde{c}_{N,\varepsilon}\,
	\croch{\EmpiF \ast \UnitLOW_{\varepsilon}}\prth{{\I}/{N}},
	\qquad
	\text{if }\I \in \LfrontN,
	\end{equation}
	where
	$$
	\tilde{c}_{N,\varepsilon}
	=\sfrac{
		\prth{2\varepsilon N}^{p-1}
		\prth{\varepsilon N}
	}{
		\prth{2\lfloor \varepsilon N \rfloor+1}^{p-1}
		\prth{\lfloor \varepsilon N \rfloor+1}
		}
	\quad
	\text{for any $\varepsilon>0$ and any $N\geq 2$.}
	$$
	Now for $\varepsilon>0$, $\Empii=\prth{\EmpiiF,\EmpiiR}\in\Meas$, and $t\in \intervalleff{0}{T}$, we let
	$
	\Weak{\Empii}^{\varepsilon}\prth{t} : = \WeakF{\Empii, G, \varepsilon}\prth{t} + \WeakR{\Empii, H, \varepsilon}\prth{t}
	$
	with
	\begin{equation}\label{EqWeakFieldFuncEPS}
		\scalebox{0.85}{$\displaystyle 	\begin{aligned}
				\hspace{0mm}
				\WeakF{\Empii, G, \varepsilon}\prth{t} : =
				\bk{\EmpiiF\prth{t} , G\prth{t}}{} - \bk{\EmpiiF\prth{0} , G\prth{0}}{}
				&-  \int_{0}^{t} 
					\bk{\EmpiiF\prth{s} , \partial_{s}G\prth{s}}{} ds
					- \int_{0}^{t}\bk{\EmpiiF\prth{s} , d \Delta G\prth{s}}{}ds\\[3mm]
				&\hspace{-75mm}
					+ \int_{0}^{t} \Psr{\croch{\EmpiiF\prth{s} \ast \UnitUP_{\varepsilon}}|_{y=1}}{d\partial_{y}G|_{y=1}\prth{s}}ds
					- \int_{0}^{t} \Psr{\croch{\EmpiiF\prth{s} \ast \UnitLOW_{\varepsilon}}|_{y=0}}{d\partial_{y}G|_{y=0}\prth{s}}ds\\[3mm]
				&\hspace{-40mm}
					- \int_{0}^{t} \alpha \bk{\EmpiiR\prth{s} , G|_{y=0}\prth{s}}{}
					+ \int_{0}^{t} \alpha \Psr{\croch{\EmpiiF\prth{s} \ast \UnitLOW_{\varepsilon}}|_{y=0}}{G|_{y=0}\prth{s}}
				\hspace{0mm} ds, 
			\end{aligned} $}
		\end{equation}
	and
	\begin{equation}\label{EqWeakRoadFuncEPS}
	\scalebox{0.85}{$\displaystyle 	\begin{aligned}
			\hspace{0mm}
			\WeakR{\Empii, H, \varepsilon}\prth{t} : = \bk{\EmpiiR\prth{t} , H\prth{t}}{} - \bk{\EmpiiR\prth{0} , H\prth{0}}{}
			&- \int_{0}^{t} 
				\bk{\EmpiiR\prth{s} , \partial_{s} H\prth{s}}{} ds \\[3mm]
			&\hspace{-78mm}- \int_{0}^{t} \bk{\EmpiiR\prth{s} , D\DeltaX H\prth{s}}{} ds 
			- \int_{0}^{t} \alpha \Psr{\croch{\EmpiiF\prth{s} \ast \UnitLOW_{\varepsilon}}|_{y=0}}{H\prth{s}}
			+ \int_{0}^{t} \alpha \bk{\EmpiiR\prth{s} , H\prth{s}}{}
			\hspace{0mm} ds.
		\end{aligned} $}
	\end{equation}
	Thanks to Proposition \ref{PropEnrgEstim}, we know that $\ProbaQInfty$ loads paths with densities $\prth{v\prth{t,\point}}_{t\in \intervalleff{0}{T}}$ in $L^{2} \prth{0,T ; {\Hb}^{1}\prth{\bulk}}$. As a consequence, the trace of $v\prth{t,\point}$ at the boundaries $\Hfront$ and $\Lfront$ is well-defined and we have {(see \cite[Section 5.3]{EvansMeasure15})}
	\begin{equation*}
	\lim\limits_{\varepsilon \to 0}\, \croch{\EmpiiF\prth{s} \ast \UnitUP_{\varepsilon}}|_{\Xs\in\front} = \Tr \prth{v},
	\qquad
	\ProbaQInfty\text{-almost-surely in }\front.
	\end{equation*}
	As a result of this
	\begin{equation}\label{EqPROOF4.3-3}
	\lim\limits_{\varepsilon \to 0}\,
	\vertI{
		\Weak{u,v}\prth{t} - 
		\Weak{\Empii}^{\varepsilon}\prth{t}
		}
	= 0,
	\qquad
	\ProbaQInfty\text{-almost-surely in }\front.
	\end{equation}

	We can now bound the probability in \eqref{EqPROOF4.3-0} as follows
	\renewcommand{\gap}{1}
	\renewcommand{\gapp}{-115mm}
	\begin{align}
	\scalebox{\gap}{$\displaystyle 	\ProbaQInfty
		\prtHHH{
			{\sup\limits_{0\leq t \leq T}}
		\vertI{\Weak{u,v}\prth{t} } > \delta
		}
		 = 
		\ProbaQInfty
		\prtHHH{
			{\sup\limits_{0\leq t \leq T}}
		\vertI{\Weak{u,v}\prth{t} - \Weak{\pi}^{\varepsilon}\prth{t} + \Weak{\pi}^{\varepsilon}\prth{t}} > \delta
		} $}&\nonumber\\[2mm]
		& \hspace{\gapp}
	\scalebox{\gap}{$\displaystyle 	\leq
		\ProbaQInfty
		\prtHHH{
			{\sup\limits_{0\leq t \leq T}}
		\vertI{\Weak{u,v}\prth{t} - \Weak{\pi}^{\varepsilon}\prth{t}} > \delta/2
		} +
		\ProbaQInfty
		\prtHHH{
			{\sup\limits_{0\leq t \leq T}}
		\vertI{\Weak{\pi}^{\varepsilon}\prth{t}} > \delta/2
		} $}\nonumber\\[2mm]
		& \hspace{\gapp}
	\scalebox{\gap}{$\displaystyle 	\leq
		\ProbaQInfty
		\prtHHH{
			{\sup\limits_{0\leq t \leq T}}
		\vertI{\Weak{u,v}\prth{t} - \Weak{\pi}^{\varepsilon}\prth{t}} > \delta/2
		} +
		\liminf\limits_{N\to \infty}
		\ProbaQN
		\prtHHH{
			{\sup\limits_{0\leq t \leq T}}
		\vertI{\Weak{\Empi}^{\varepsilon}\prth{t}} > \delta/2}, $}
		\label{EqPROOF4.3-1}
	\end{align}
	where we used the Portmanteau Theorem to write the last inequality.
	While the vanishing of the first term in \eqref{EqPROOF4.3-1} as $\varepsilon$ goes to zero is a straight consequence of \eqref{EqPROOF4.3-3}, the second one requires further attention.
	By considering the probability under the limit in this second term, we control it by this way:
	\renewcommand{\gap}{1}
	\renewcommand{\gapp}{-25mm}
	\begin{align}
	\scalebox{\gap}{$\displaystyle 	\ProbaQN
		\prtHHH{
			{\sup\limits_{0\leq t \leq T}}
		\vertI{\Weak{\Empi}^{\varepsilon}\prth{t}} > \delta/2
		} $}
		& \scalebox{\gap}{$\displaystyle =
		\ProbaQN
		\prtHHH{
			{\sup\limits_{0\leq t \leq T}}
		\vertI{\Weak{\Empi}^{\varepsilon}\prth{t} - \Martin\prth{t} + \Martin\prth{t}} > \delta/2
		} $}\nonumber\\[2mm]
		&\hspace{\gapp}
		\scalebox{\gap}{$\displaystyle \leq 
		\ProbaQN
		\prtHHH{
			{\sup\limits_{0\leq t \leq T}}
		\vertI{\Weak{\Empi}^{\varepsilon}\prth{t} - \Martin\prth{t}} > \delta/4}
		+ \, \ProbaQN
		\prtHHH{
			{\sup\limits_{0\leq t \leq T}}
		\vertI{\Martin\prth{t}} > \delta/4
		}, $}\nonumber\\[2mm]
		&\hspace{\gapp}
		\scalebox{\gap}{$\displaystyle \leq 
		\frac{4}{\delta}
		\EsperanceMuN
		\prtHHH{
			{\sup\limits_{0\leq t \leq T}}
		\vertI{\Weak{\Empi}^{\varepsilon}\prth{t} - \Martin\prth{t}}}
		+ \, \ProbaQN
		\prtHHH{
			{\sup\limits_{0\leq t \leq T}}
		\vertI{\Martin\prth{t}} > \delta/4
		}, $}\label{EqPROOF4.3-2}
	\end{align}
	where we recall that the martingale $\Martin\prth{t}$ is defined in \eqref{EqMartin0}.
	In \eqref{EqPROOF4.3-2}, the vanishing of
	$
	\ProbaQN
	\prth{
	\verti{\Martin\prth{t}} > \delta/4
	}
	$
	can be shown by using the {Doob's} inequality:
	\begin{align*}
		\ProbaQN
		\prtHHH{
			{\sup\limits_{0\leq t \leq T}}
		\vertI{\Martin\prth{t}} > \delta/4
		}
	&\leq
	\frac{16}{\delta^{2}}
	\EsperanceMuN
	\prtHHH{
	\crocH{\Martin\prth{t}}^{2}
	}
	\stackrel{\eqref{EqMartinQ}}{=}
	\frac{16}{\delta^{2}}
	\Croch{
	\EsperanceMuN
	\prtHHH{
		\MartinQ\prth{t} + \int_{0}^{t} \MartinB\prth{s} ds
		}
	}\\[3mm]
		&=
		\frac{16}{\delta^{2}}
		\Croch{
		\EsperanceMuN
		\prtHHH{
			\int_{0}^{t} \MartinB\prth{s} ds
			}
		},
	\end{align*}
	that goes to zero as $N\to \infty$ --- see the control of $\MartinB\prth{s}$ below \eqref{EqTightSqrtB}.
	Now we focus on the remaining expectation in \eqref{EqPROOF4.3-2}, and note that the quantity
	$\verti{\Weak{\Empi}^{\varepsilon}\prth{t} - \Martin\prth{t}}$
	can be bounded by the sum of the following terms
	\renewcommand{\gap}{0.98}
	\begin{equation}\label{EqPRF4.3-vansh_1}
	\scalebox{\gap}{$\displaystyle \sup\limits_{0\leq t \leq T}
	\Verti{
	\int_{0}^{t}
		\bk{\EmpiF\prth{s} , d \prth{\Delta - \DeltaXN + \partial_{yy}^{N}} G\prth{s}}{}
		+ O\prth{1/N} \,
	ds	
	}, $}
	\end{equation}
	\begin{equation}\label{EqPRF4.3-vansh_2}
	\scalebox{\gap}{$\displaystyle \hspace{-2mm}
	\sup\limits_{0\leq t \leq T}
	\Verti{
	\int_{0}^{t}
		\Psr{\croch{\EmpiF\prth{s} \ast \UnitUP_{\varepsilon}}|_{y=1}}{d\partial_{y}G|_{y=1}\prth{s}}
		- \frac{1}{N^{\dim-1}}\!
			\sum\limits_{i\in\HfrontN}^{}
			d\partial_{y}^{N}G\prth{s,\sfrac{i}{N}}\timess\eta_{s}\prth{i} \,
	ds	
	}, $}
	\end{equation}
	\begin{equation}\label{EqPRF4.3-vansh_3}
	\scalebox{\gap}{$\displaystyle \hspace{-2mm}
	\sup\limits_{0\leq t \leq T}
	\Verti{
	\int_{0}^{t}
		\Psr{\croch{\EmpiF\prth{s} \ast \UnitLOW_{\varepsilon}}|_{y=0}}{d\partial_{y}G|_{y=0}\prth{s}}
		- \frac{1}{N^{\dim-1}}\!
			\sum\limits_{i\in\LfrontN}^{}
			d\partial_{y}^{N}G\prth{s,\sfrac{i}{N}}\timess\eta_{s}\prth{i} \,
	ds	
	}, $}
	\end{equation}
	\begin{equation}\label{EqPRF4.3-vansh_4}
	\scalebox{\gap}{$\displaystyle \hspace{-2mm}
	\sup\limits_{0\leq t \leq T}
	\Verti{
	\int_{0}^{t}
		\alpha
		\Psr{\croch{\EmpiF\prth{s} \ast \UnitLOW_{\varepsilon}}|_{y=0}}{G|_{y=0}\prth{s}}
		- \frac{\alpha}{N^{\dim-1}}\!
			\sum\limits_{i\in\LfrontN}^{}
			G\prth{s,\sfrac{i}{N}}\timess\eta_{s}\prth{i} \,
	ds	
	}, $}
	\end{equation}
	\begin{equation}\label{EqPRF4.3-vansh_5}
	\scalebox{\gap}{$\displaystyle 
	\sup\limits_{0\leq t \leq T}
	\Verti{
	\int_{0}^{t}
		\alpha \, \bk{\EmpiR\prth{s} , G|_{y=0}\prth{s} - G|_{y=\frac{1}{N}}\prth{s}}{}
	\,
	ds	
	}, $}
	\end{equation}
	\begin{equation}\label{EqPRF4.3-vansh_6}
	\scalebox{\gap}{$\displaystyle 
	\sup\limits_{0\leq t \leq T}
	\Verti{
	\int_{0}^{t}
		\frac{1}{N^{\dim}}\!\sum\limits_{i\in\HfrontN}^{}
		G\prth{s,\sfrac{i}{N}}\timess\prth{b-\eta_{s}\prth{i}}
	ds
	}, $}
	\end{equation}
	for the \glmt{field parts}, and
	\renewcommand{\gap}{0.98}
	\begin{equation}\label{EqPRF4.3-vansh_7}
	\scalebox{\gap}{$\displaystyle \sup\limits_{0\leq t \leq T}
	\Verti{
	\int_{0}^{t}
		\bk{\EmpiR\prth{s} , D \prth{\DeltaX - \DeltaXN} H\prth{s}}{} \,
	ds	
	}, $}
	\end{equation}
	\begin{equation}\label{EqPRF4.3-vansh_8}
	\scalebox{\gap}{$\displaystyle \hspace{-2mm}
	\sup\limits_{0\leq t \leq T}
	\Verti{
	\int_{0}^{t}
		\alpha
		\Psr{\croch{\EmpiF\prth{s} \ast \UnitLOW_{\varepsilon}}|_{y=0}}{H\prth{s}}
		- \frac{\alpha}{N^{\dim-1}}\!
			\sum\limits_{i\in\LfrontN}^{}
			H\prth{s,\sfrac{i}{N}}\timess\eta_{s}\prth{i} \,
	ds	
	}, $}
	\end{equation}
	for the \glmt{road parts}.
	To conclude this proof, it remains to argue that all these terms \eqref{EqPRF4.3-vansh_1}-\eqref{EqPRF4.3-vansh_8} vanish in the limit $N\to\infty$ then $\varepsilon\to 0$.
	First of all, due to the regularity of the test functions $G$ and $H$, it is clear that
	\eqref{EqPRF4.3-vansh_1}, \eqref{EqPRF4.3-vansh_5} and \eqref{EqPRF4.3-vansh_7} go to zero as $N\to\infty$. Notice then that \eqref{EqPRF4.3-vansh_6} associated to the upper spawn/kill dynamics is a $O\prth{1/N}$.
	Lastly, the vanishing of
	\eqref{EqPRF4.3-vansh_2}-\eqref{EqPRF4.3-vansh_3}-\eqref{EqPRF4.3-vansh_4}-\eqref{EqPRF4.3-vansh_8}
	arises from the Replacement lemmas (Lemma \ref{LeRemplacUP} and Lemma \ref{LeRemplacLOW}), the approximation of $\eta_{s}^{\varepsilon N}\prth{i}$ with the convolution products \eqref{EqPROOF4.3-4} and \eqref{EqPROOF4.3-5}, and the regularity of the test functions $G$ and $H$.
	\end{proof}

\section{Uniqueness of the solution}\label{S5Uniqueness}

In this last section, we establish the uniqueness of the weak solutions to the Cauchy problem \eqref{syst}-\eqref{data} in the sense \ref{ItWeak1}-\ref{ItWeak2}.
Our proof relies on testing the weak formulation \ref{ItWeak2} against the solutions $\prth{G,H}$ to a \glmt{dual problem} related to \eqref{syst}.

\begin{proposition}[Uniqueness]\label{PropUniqueness}
There exists at most one solution 
$\prth{v,u}$
to the Cauchy problem \eqref{syst}-\eqref{data} in the sense \textnormal{\ref{ItWeak1}-\ref{ItWeak2}}.
\end{proposition}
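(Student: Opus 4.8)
Since the system \eqref{syst} is linear, the plan is to reduce the statement to a vanishing result: if $\prth{v,u}$ satisfies \ref{ItWeak1}-\ref{ItWeak2} with $v_{0}\equiv 0$ and $u_{0}\equiv 0$, then $v\equiv 0$ and $u\equiv 0$. Indeed, the difference of two weak solutions sharing the same initial data is such a couple (the conditions \ref{ItWeak1}-\ref{ItWeak2} being stable under linear combinations). The strategy I would then follow is the classical duality argument: test the weak formulation \eqref{EqWeakField}-\eqref{EqWeakRoad} against the solution of a well-chosen \emph{backward} field-road problem, tailored so that every term on the right-hand side telescopes away except the ones pairing with the unknowns $v$ and $u$ themselves.

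Concretely, I would fix arbitrary $\phi\in{\Cb}^{\infty}_{c}\prth{\intervalleoo{0}{T}\times\bulk}$ and $\chi\in{\Cb}^{\infty}_{c}\prth{\intervalleoo{0}{T}\times\Road}$, and consider the problem of finding a pair $\prth{G,H}$, with $G\in{\Cb}^{1,2}\prth{\intervalleff{0}{T}\times\barbulk}$ and $H\in{\Cb}^{1,2}\prth{\intervalleff{0}{T}\times\Road}$, solving
\begin{equation*}
\left\lbrace
\begin{array}{ll}
\partial_{s}G + d\,\Delta G = \phi, & \text{in }\intervalleoo{0}{T}\times\bulk,\\[1.5mm]
\partial_{y}G|_{y=1} = 0, & \text{on }\intervalleoo{0}{T}\times\Hfront,\\[1.5mm]
d\,\partial_{y}G|_{y=0} = \alpha\,G|_{y=0} - \alpha\,H, & \text{on }\intervalleoo{0}{T}\times\Lfront,\\[1.5mm]
\partial_{s}H + D\,\DeltaX H + \alpha\prth{G|_{y=0} - H} = \chi, & \text{on }\intervalleoo{0}{T}\times\Road,\\[1.5mm]
G\prth{T,\cdot} \equiv 0, \qquad H\prth{T,\cdot} \equiv 0. &
\end{array}
\right.
\end{equation*}
Through the time reversal $s\mapsto T-s$, this becomes a forward field-road system of exactly the structure of \eqref{syst} (up to harmless smooth source terms and a vanishing initial datum), so I would obtain $\prth{G,H}$ from the well-posedness and parabolic-smoothing theory for that system --- which, on $\TT^{\dim-1}\times\intervalleff{0}{1}$, can alternatively be produced by hand via a Fourier expansion in $x$ combined with an eigenfunction expansion in $y$, or borrowed from \cite{AlfaroFieldroad23,AlfaroLong23A}. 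Because $\phi$ and $\chi$ vanish near $s=T$, backward uniqueness forces $G$ and $H$ to vanish identically near $s=T$ as well, so the corner compatibility conditions on $\acco{s=T}\times\front$ are automatic and $\prth{G,H}$ is as regular as \ref{ItWeak2} requires of a test pair.

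With such a pair in hand, I would plug it into \eqref{EqWeakField} and \eqref{EqWeakRoad} taken at $t=T$ and add the two identities. Using $v_{0}\equiv 0$, $u_{0}\equiv 0$ and $G\prth{T,\cdot}\equiv H\prth{T,\cdot}\equiv 0$, the left-hand side vanishes; on the right-hand side I would group the contributions according to whether they pair with $v$ on $\bulk$, with the trace of $v$ on $\Hfront$, with the trace of $v$ on $\Lfront$, or with $u$ on $\Road$, and invoke the four equations of the dual problem. The $\Hfront$-trace term disappears by $\partial_{y}G|_{y=1}=0$; the $\Lfront$-trace term disappears by $d\,\partial_{y}G|_{y=0}-\alpha G|_{y=0}+\alpha H=0$; and the bulk and road groups collapse, by the remaining two equations, to $\int_{0}^{T}\psb{v\prth{s}}{\phi\prth{s}}\,ds$ and $\int_{0}^{T}\psr{u\prth{s}}{\chi\prth{s}}\,ds$. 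Hence
$$
0 = \int_{0}^{T}\psb{v\prth{s}}{\phi\prth{s}}\,ds + \int_{0}^{T}\psr{u\prth{s}}{\chi\prth{s}}\,ds
$$
for all admissible $\phi$ and $\chi$. Choosing $\chi\equiv 0$ forces $v\equiv 0$ in $L^{2}\prth{\intervalleoo{0}{T}\times\bulk}$, and then $u\equiv 0$ in $L^{2}\prth{\intervalleoo{0}{T}\times\Road}$, which is the claim.

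The genuinely delicate point of this scheme is the construction step in the second paragraph: producing the dual pair $\prth{G,H}$ with the ${\Cb}^{1,2}$ regularity demanded by \ref{ItWeak2}. This is precisely where well-posedness and smoothing for the field-road system are used; granting that, the remainder is a routine --- if slightly tedious --- reorganisation of the weak formulation, in which the boundary traces of $v$ make sense exactly because $v$ enjoys the regularity \ref{ItWeak1} (equivalently, Proposition \ref{PropEnrgEstim}).
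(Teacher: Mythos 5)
Your argument is correct and follows the same duality route as the paper's proof: reduction to zero initial data by linearity, the same backward dual problem \eqref{EQDualSyst}--\eqref{EQDualSystData}, the same time-reversal to a forward field-road system with sources (whose classical ${\Cb}^{1,2}$ solution the paper produces via the field-road heat kernel of \cite{AlfaroFieldroad23} together with Duhamel's principle, while you additionally mention Fourier/eigenfunction expansion as an alternative route to the same fact), and the same conclusion by $L^{2}$-density of the compactly supported test sources. Your extra remark on corner compatibility near $s=T$ is a welcome addition, though the mechanism is simply forward uniqueness from zero data for the time-reversed system rather than ``backward uniqueness'' in the usual parabolic sense.
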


\begin{proof}[Proof of Proposition \ref{PropUniqueness} (Uniqueness)]
By linearity it is enough to consider the case
$\prth{v_{0},u_{0}}\equiv \prth{0,0}$.
Given any
$\varphi \in {\Cb}_{c}^{\infty}\prth{\intervalleff{0}{T}\times\bulk}$
and any
$\psi \in {\Cb}_{c}^{\infty}\prth{\intervalleff{0}{T}\times\Road}$,
we consider the problem
\begin{equation}\label{EQDualSyst}
\left\lbrace \begin{array}{lllll}
	-\partial_{t} G - d \Delta G = \varphi, &\quad
	& t\in\intervalleoo{0}{T}, \, & x \in \TT^{\dim-1}, \, & y\in\intervalleoo{0}{1}, \\[1.76mm]
	- d\partial_{y} G|_{y=0} = \alpha \prth{H-G|_{y=0}}, &\quad
	& t\in\intervalleoo{0}{T}, \, & x \in \TT^{\dim-1}, \, & y=0, \\[1.76mm]
	- \partial_{t} H - D \DeltaX H - \alpha \prth{G|_{y=0}-H} = \psi, &\quad
	& t\in\intervalleoo{0}{T}, \, & x \in \TT^{\dim-1}, & \\[1.76mm]
	\partial_{y} G|_{y=0} = 0, &\quad
	& t\in\intervalleoo{0}{T}, \, & x \in \TT^{\dim-1}, \, & y=1,
\end{array} \right .
\end{equation}
supplemented with final condition
\begin{equation}\label{EQDualSystData}
	\left\lbrace \begin{array}{lllll}
		G|_{t=T} \equiv 0, &
		& & x \in \TT^{\dim-1}, \, & y\in\intervalleoo{0}{1}, \\[1.76mm]
		H|_{t=T} \equiv 0, &
		& & x \in \TT^{\dim-1}.&
	\end{array} \right .
\end{equation}
By letting $\prth{\tilde{G}\prth{t},\tilde{H}\prth{t}} = \prth{G\prth{T-t},H\prth{T-t}}$, we can notice that the problem \eqref{EQDualSyst}-\eqref{EQDualSystData} is actually a reversed-time field-road system with sources, namely
\begin{equation}\label{EQDualSystReversed}
\left\lbrace \begin{array}{lllll}
	\partial_{t} \tilde{G} = d \Delta \tilde{G} + \varphi, &\quad
	& t\in\intervalleoo{0}{T}, \, & x \in \TT^{\dim-1}, \, & y\in\intervalleoo{0}{1}, \\[1.76mm]
	- d\partial_{y} \tilde{G}|_{y=0} = \alpha \prth{\tilde{H}-\tilde{G}|_{y=0}}, &\quad
	& t\in\intervalleoo{0}{T}, \, & x \in \TT^{\dim-1}, \, & y=0, \\[1.76mm]
	\partial_{t} \tilde{H} = D \DeltaX \tilde{H} + \alpha \prth{\tilde{G}|_{y=0}-\tilde{H}} + \psi, &\quad
	& t\in\intervalleoo{0}{T}, \, & x \in \TT^{\dim-1}, & \\[1.76mm]
	\partial_{y} \tilde{G}|_{y=0} = 0, &\quad
	& t\in\intervalleoo{0}{T}, \, & x \in \TT^{\dim-1}, \, & y=1,
\end{array} \right .
\end{equation}
provided with trivial initial data. In absence of sources, the solution is explicitly known through the \lq\lq field-road heat kernel'', that could be computed as in \cite{AlfaroFieldroad23}. By combining  this with the Duhamel principle,  see \cite[Chapter 4, Section 3]{GigaNonlinear10} for instance,  we obtain the classical solution to the above problem with sources.  As a result, we own
$G\in{\Cb}^{1,2} \prth{\intervalleff{0}{T} \times \barbulk}$
and
$H\in{\Cb}^{1,2} \prth{\intervalleff{0}{T} \times \Road}$
that satisfy \eqref{EQDualSyst}-\eqref{EQDualSystData},
and that are sufficiently smooth to be tested in the weak formulation \ref{ItWeak2}.
By plugging $\prth{G,H}$ into \eqref{EqWeakField} and \eqref{EqWeakRoad}, and then summing the two obtained results, we are left with
$$
\int_{0}^{T} \Psb{v\prth{s}}{\varphi\prth{s}} \, ds + 
\int_{0}^{T} \Psr{u\prth{s}}{\psi\prth{s}} \, ds = 0,
$$
that holds for any
$\varphi \in {\Cb}_{c}^{\infty}\prth{\intervalleff{0}{T}\times\bulk}$
and any
$\psi \in {\Cb}_{c}^{\infty}\prth{\intervalleff{0}{T}\times\Road}$. In particular we have
\begin{equation}\label{EqRemainsUniqueness}
	\left\lbrace \begin{array}{ll}
		\displaystyle\int_{0}^{T} \Psb{v\prth{s}}{\varphi\prth{s}} \, ds = 0, & \qquad\forall \varphi \in {\Cb}_{c}^{\infty}\prth{\intervalleff{0}{T}\times\bulk},\\[3mm]
		\displaystyle\int_{0}^{T} \Psr{u\prth{s}}{\psi\prth{s}} \, ds = 0, & \qquad\forall \psi \in {\Cb}_{c}^{\infty}\prth{\intervalleff{0}{T}\times\Road}.
	\end{array} \right .
\end{equation}
From \eqref{EqRemainsUniqueness}, we can deduce that $v$ and $u$ are both identically zero, see \cite[Lemma IV.2]{BrezisAnalyse83}, and the proof is therefore completed.
\end{proof}

\paragraph*{Acknowledgement.}
Matthieu Alfaro is supported by  the \textit{région Normandie} project BIOMA-NORMAN 21E04343 and the ANR project DEEV ANR-20-CE40-0011-01. Samuel Tréton would like to acknowledge the \textit{région Normandie} for the financial support of his PhD.


\section*{Appendix}
\addcontentsline{toc}{section}{Appendix}

\appendix

\refstepcounter{section}%

\subsection{Some tools and basic estimates}

To prove the upper Replacement lemma (Lemma \ref{LeRemplacUP}) and the energy estimate (Lemma \ref{LeEnrgEstim}), we first need to introduce the relative entropy $\entropie$ and the Dirichlet form $\DN$.
These notions are rather classical and can be found in
\cite[Appendix 1, Sections 7-8-9-10]{KipnisScaling99} for instance.

For
$\gamma\in\intervalleoo{0}{1}$,
we denote by
$
\nu_{N}=
\nu_{N,\gamma}^{\textnormal{\scalebox{0.9}{field}}}
\otimes
\nu_{N,\gamma}^{\textnormal{\scalebox{0.9}{road}}}
$
the Bernoulli product measure on $\SpaceState$ whose marginals are given by
$$
\nu_{N,\gamma}^{\textnormal{\scalebox{0.9}{field}}} \crocH{\,\eta\prth{\I}=\sbullet\,} =
\left\lbrace \begin{array}{ll}
	\gamma & \text{if }\,\sbullet = 1, \\ 
	1-\gamma & \text{if }\,\sbullet = 0,
\end{array} \right .
\qquad
\text{for any $\I\in\bulkN$, and}
$$
$$
\nu_{N,\gamma}^{\textnormal{\scalebox{0.9}{road}}} \crocH{\,\xi\prth{i}=\sbullet\,} =
\left\lbrace \begin{array}{ll}
	\gamma & \text{if }\,\sbullet = 1, \\ 
	1-\gamma & \text{if }\,\sbullet = 0,
\end{array} \right .
\qquad
\text{for any $i\in\RoadNN$.}
$$
The probability measure $\nu_{N}$ on $\SpaceState$ offers interesting properties to work with the relative entropy and the Dirichlet form. Indeed, since $\nu_{N}$ gives a positive probability to each configuration $\prth{\eta,\xi}\in\SpaceState$, any probability measure $\mu_{N}$ on $\SpaceState$ is absolutely continuous with respect to $\nu_{N}$.
Moreover, the changes of variable of type \glmt{flip} or \glmt{switch} as done below are very simple to express when integrating with respect to $\nu_{N}$.

For a probability measure $\mu_{N}$ on $\SpaceState$, the entropy of $\mu_{N}$ with respect to $\nu_{N}$ is defined as the positive value
$$
\entropie\prtH{\mu_{N}|\nu_{N}} : =
\sup\limits_{f\in\mathbb{R}^{\SpaceStatee}}
\accOOOO{
\int_{\SpaceState} f\prth{\eta,\xi} \; d\mu_{N}\prth{\eta,\xi} -
\log \Croch{\int_{\SpaceState} e^{f\prth{\eta,\xi}} \; d\nu_{N}\prth{\eta,\xi}}
}.
$$
Since $\mu_{N}$ is absolutely continuous with respect to $\nu_{N}$, the entropy can be explicitly written as \cite[Appendix 1, Sections 8, Theorem 8.3]{KipnisScaling99}
$$
\entropie\prtH{\mu_{N}|\nu_{N}} =
\int_{\SpaceState}\log \Croch{\frac{d\mu_{N}}{d\nu_{N}}\prth{\eta,\xi}}d\mu_{N}\prth{\eta,\xi} =
\sum\limits_{\prth{\eta,\xi}\in\SpaceState}^{}\mu_{N}\prth{\eta,\xi}\log \Croch{\frac{\mu_{N}\prth{\eta,\xi}}{\nu_{N}\prth{\eta,\xi}}},
$$
where
${d\mu_{N}}/{d\nu_{N}}$
denotes the Radon-Nikodym derivative of $\mu_{N}$ with respect to $\nu_{N}$ and the last equality is a consequence of the finiteness of $\SpaceState$.
By decomposing $\mu_{N}$ as a convex combination of Dirac masses and using the convexity of the entropy, we can show that
\begin{equation}\label{EqContrH}
\entropie\prtH{\mu_{N}|\nu_{N}} \leq C_{0}N^{\dim},
\end{equation}
for $C_{0} : = -\log\prth{\min\prth{\gamma,1-\gamma}} > 0$.

Given a function
$f : \SpaceState \to \mathbb{R}$,
the Dirichlet form associated to the dynamics writes
\begin{equation}\label{EqDiriForm}
\DN\prth{f,\nu_{N}} : = -\bk{\LN f,f}{\hspace{-0.1mm}\nu_{\scalebox{0.4}{$N$}}}
\end{equation}
and may be split following the different parts of $\LN$ --- see \eqref{GenGathered} --- into 
\begin{equation}\label{EqDiriFormParts}
\DN =
N^{2} \; \DBulkF +
N^{2} \; \DBulkR +
N     \, \DRobin +
\DReaction +
\DNeumann .
\end{equation}
At some points in the proofs of upper Replacement lemma (Lemma \ref{LeRemplacUP}) and the energy estimate (Lemma \ref{LeEnrgEstim}), it becomes essential to control each component of $\DN$.
These controls are encapsulated in Lemma \ref{LeCtrlDiriForm} below, and require to introduce the functional
\begin{equation}\label{EqDiriFormI}
\IN =
\IN\prth{f,\nu_{N}} : =
N^{2} \; \IBulkF +
N^{2} \; \IBulkR +
N     \, \IRobin +
\IReaction +
\INeumann,
\end{equation}
where, for any $f : \SpaceState \to \mathbb{R}$,
\renewcommand{\gap}{4mm}
\begin{equation*}
{\IBulkF} \prth{f,\nu_{N}} : =
{\frac{d}{2}} \sum\limits_{\substack{  \Is,\Ks \in \bulkN  \\  \verti{\Is-\Ks}=1}  }
\int_{\SpaceState}
\Croch{\sqrt{f\prth{\eta^{\Is,\Ks},\xi}} - \sqrt{f\prth{\eta,\xi}}}^{2}
d\nu_{N}\prth{\eta,\xi},
\end{equation*}
\vspace{\gap}
\begin{equation*}
{\IBulkR} \prth{f,\nu_{N}} : =
{\frac{D}{2}} \sum\limits_{\substack{  i,k\in\RoadNN  \\  \verti{i-k}=1}}
\int_{\SpaceState}
\Croch{\sqrt{f\prth{\eta,\xi^{i,k}}} - \sqrt{f\prth{\eta,\xi}}}^{2}
d\nu_{N}\prth{\eta,\xi}, 
\end{equation*}
\vspace{\gap}
\begin{equation*}
	{\IRobin} \prth{f,\nu_{N}} : =
	\alpha
	\sum\limits_{i\in \RoadNN}
	\int_{\SpaceState}
	\prtH{
		\eta\prth{i}-\xi\prth{i}
	}^{2}
	\Croch{\sqrt{f(\eta^{i},\xi)} - \sqrt{f\prth{\eta,\xi}}}^{2}
	d\nu_{N}\prth{\eta,\xi},
\end{equation*}
\vspace{\gap}
\begin{equation*}
	{\IReaction} \prth{f,\nu_{N}} : =
	\alpha
	\sum\limits_{i\in \RoadNN}
	\int_{\SpaceState}
	\prtH{
		\eta\prth{i}-\xi\prth{i}
	}^{2}
	\Croch{\sqrt{f\prth{\eta,\xi^{i}}} - \sqrt{f\prth{\eta,\xi}}}^{2}
	d\nu_{N}\prth{\eta,\xi},
\end{equation*}
\vspace{\gap}
\begin{equation*}
	{\INeumann} \prth{f,\nu_{N}} : =
	\sum\limits_{i\in\HfrontN}
	\int_{\SpaceState}
	\prtH{
		b\prth{1-\eta\prth{i}} +
		\prth{1-b}\eta\prth{i}
	}
	\Croch{\sqrt{f\prth{\eta^{i},\xi}} - \sqrt{f\prth{\eta,\xi}}}^{2}
	d\nu_{N}\prth{\eta,\xi},
\end{equation*}
are all nonnegative.

\begin{lemma}[Control of the Dirichlet forms]\label{LeCtrlDiriForm}
For any density function 
$f : {\SpaceState} \to \mathbb{R}$
with respect to the Bernoulli product measure $\nu_{N}$ on $\SpaceState$ with parameter $\gamma$, we have
\refstepcounter{ITDidi}\label{ITDidi1}
\begin{itemize}
	\item[\textnormal{\textbf{(D1)}}]
	For any $\gamma \in \intervalleoo{0}{1}$,
\begin{equation}\label{EqCtrlD1}
	\bk{\LBulkF \sqrt{f},\sqrt{f}}{\hspace{-0.1mm}\nu_{\scalebox{0.4}{$N$}}} =
	- \DBulkF\prth{\sqrt{f},\nu_{N}} =
	- \frac{1}{2}\IBulkF\prth{f,\nu_{N}}.
\end{equation}
\refstepcounter{ITDidi}\label{ITDidi2}
	\item[\textnormal{\textbf{(D2)}}]
	For any $\gamma \in \intervalleoo{0}{1}$,
\begin{equation}\label{EqCtrlD2}
	\bk{\LBulkR \sqrt{f},\sqrt{f}}{\hspace{-0.1mm}\nu_{\scalebox{0.4}{$N$}}} =
	- \DBulkR\prth{\sqrt{f},\nu_{N}} =
	- \frac{1}{2}\IBulkR\prth{f,\nu_{N}}.
\end{equation}
\refstepcounter{ITDidi}\label{ITDidi3}
	\item[\textnormal{\textbf{(D3)}}]
	For any $\gamma \in \intervalleoo{0}{1}$,
\begin{equation}\label{EqCtrlD3}
	\bk{\LRobin \sqrt{f},\sqrt{f}}{\hspace{-0.1mm}\nu_{\scalebox{0.4}{$N$}}} =
	- \DRobin\prth{\sqrt{f},\nu_{N}} =
	- \frac{1}{2}\IRobin\prth{f,\nu_{N}} + \BadasseEpsilon_{\!N}\prth{\alpha, \gamma, f}.
\end{equation}
\refstepcounter{ITDidi}\label{ITDidi4}
	\item[\textnormal{\textbf{(D4)}}]
	For any $\gamma \in \intervalleoo{0}{1}$,
\begin{equation}\label{EqCtrlD4}
	\bk{\LReaction \sqrt{f},\sqrt{f}}{\hspace{-0.1mm}\nu_{\scalebox{0.4}{$N$}}} =
	- \DReaction\prth{\sqrt{f},\nu_{N}} =
	- \frac{1}{2}\IReaction\prth{f,\nu_{N}} + \BadasseEpsilon_{\!N}\prth{\alpha, \gamma, f}.
\end{equation}
	\refstepcounter{ITDidi}\label{ITDidi5}
	\item[\textnormal{\textbf{(D5)}}]
		For $\gamma = b$,
\begin{equation}\label{EqCtrlD5}
	\bk{\LNeumann \sqrt{f},\sqrt{f}}{\hspace{-0.1mm}\nu_{\scalebox{0.4}{$N$}}} =
	- \DNeumann\prth{\sqrt{f},\nu_{N}} =
	- \frac{1}{2}\INeumann\prth{f,\nu_{N}}.
\end{equation}
\end{itemize}
In \textnormal{\hyperref[ITDidi3]{(D3)}} and \textnormal{\hyperref[ITDidi4]{(D4)}}, $\BadasseEpsilon_{\!N}\prth{\alpha, \gamma, f}$
is twice the same quantity, and such that
\begin{equation}\label{EqCtrlBadasseEps}
\verti{\BadasseEpsilon_{\!N}\prth{\alpha, \gamma, f}} \leq c\prth{\gamma} \alpha N^{\dim - 1}.
\end{equation}
\end{lemma}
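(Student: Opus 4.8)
The plan is to derive all five identities \textnormal{(D1)}--\textnormal{(D5)} simultaneously from one algebraic observation, and then to see that the gap between $\bk{L\sqrt f,\sqrt f}{\nu_N}$ and $-\tfrac12\mathcal I(f,\nu_N)$ is a \emph{reversibility defect} of the corresponding piece of $\LN$ with respect to $\nu_N$: it vanishes for the two exclusion parts and for the tuned reservoir part, and is an explicit bounded quantity for the two exchange parts. Concretely, I would write any of the five building blocks of $\LN$ in \eqref{GenGathered} in the generic form $L\,h(\sigma)=\sum_e c_e(\sigma)\croch{h(\sigma^e)-h(\sigma)}$, where $\sigma=(\eta,\xi)\in\SpaceState$, the index $e$ ranges over exactly the same set of elementary moves as the corresponding sum in \eqref{GenBulkField}--\eqref{GenReservoir} (neighbour pairs in $\bulkN$, neighbour pairs in $\RoadNN$, flip sites $i\in\RoadNN$, or flip sites $i\in\HfrontN$), $\sigma^e$ is the configuration obtained by performing $e$, and $c_e\geq 0$ is the matching rate ($d/2$, $D/2$, $\alpha(\eta(i)-\xi(i))^2$, or $b(1-\eta(i))+(1-b)\eta(i)$). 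Using the identity $a(c-a)=-\tfrac12(c-a)^2+\tfrac12(c^2-a^2)$ with $a=\sqrt{f(\sigma)}$, $c=\sqrt{f(\sigma^e)}$, and integrating against $\nu_N$, one gets
\begin{align*}
\bk{L\sqrt f,\sqrt f}{\nu_N} = &-\frac12\sum_e\int_{\SpaceState}c_e(\sigma)\crocH{\sqrt{f(\sigma^e)}-\sqrt{f(\sigma)}}^{2} d\nu_N(\sigma)\\
&+ \frac12\sum_e\int_{\SpaceState}c_e(\sigma)\crocH{f(\sigma^e)-f(\sigma)}\,d\nu_N(\sigma).
\end{align*}
By construction of \eqref{EqDiriFormI} and its sub-definitions the first sum is exactly $-\tfrac12$ times the matching component of $\IN$ (this also re-proves that each component of $\IN$ is nonnegative), and since the middle equality in each of \textnormal{(D1)}--\textnormal{(D5)} is merely the definition \eqref{EqDiriForm}--\eqref{EqDiriFormParts}, everything reduces to the second sum, which I denote $R_L(f)$.

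For \textnormal{(D1)} and \textnormal{(D2)}, each move $\eta\mapsto\eta^{\Is,\Ks}$, resp. $\xi\mapsto\xi^{i,k}$, conserves the particle number, hence the Bernoulli product measure $\nu_{N,\gamma}$ is invariant under it for \emph{every} $\gamma\in\intervalleoo{0}{1}$; the change of variables $\sigma\mapsto\sigma^e$ then gives $\int f(\sigma^e)\,d\nu_N=\int f\,d\nu_N$ term by term, so $R_L(f)=0$. For \textnormal{(D5)}, the flip $\eta\mapsto\eta^{i}$ at $i\in\HfrontN$ does not preserve $\nu_N$, but $\nu_N$ is reversible for $\LNeumann$ precisely when $\gamma=b$: carrying out $\sigma\mapsto\sigma^i$ converts the weight $b(1-\eta(i))+(1-b)\eta(i)$ into $\prth{b\,\eta(i)+(1-b)(1-\eta(i))}\cdot\tfrac{\nu_N(\eta^i)}{\nu_N(\eta)}$, and for $\gamma=b$ the Radon--Nikodym factor $\tfrac{\nu_N(\eta^i)}{\nu_N(\eta)}=\prth{\tfrac{b}{1-b}}^{1-2\eta(i)}$ is exactly what turns this back into the original weight; hence again $R_L(f)=0$. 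This settles \textnormal{(D1)}, \textnormal{(D2)} and \textnormal{(D5)}.

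For \textnormal{(D3)} and \textnormal{(D4)} the remainder survives and one sets $\BadasseEpsilon_{\!N}\prth{\alpha,\gamma,f}:=R_L(f)$. To evaluate it I would split, for each active site $i\in\RoadNN$, the integral over $\SpaceState$ according to the four values of $\prth{\eta(i),\xi(i)}\in\acco{0,1}^{2}$: the factor $(\eta(i)-\xi(i))^2$ kills the two diagonal cases, and after the change of variables $\sigma\mapsto\sigma^i$ (flipping $\eta(i)$ in \textnormal{(D3)}, flipping $\xi(i)$ in \textnormal{(D4)}) one finds in both cases
\[
\BadasseEpsilon_{\!N}\prth{\alpha,\gamma,f} = \frac{\alpha}{2}\sum_{i\in\RoadNN}\crocH{\frac{1-\gamma}{\gamma}\int_{\acco{\eta(i)=\xi(i)=1}} f\,d\nu_N + \frac{\gamma}{1-\gamma}\int_{\acco{\eta(i)=\xi(i)=0}} f\,d\nu_N - \int_{\acco{\eta(i)\neq\xi(i)}} f\,d\nu_N}.
\]
Since the rate $(\eta(i)-\xi(i))^2$ and the product measure $\nu_N$ are symmetric under $\eta(i)\leftrightarrow\xi(i)$, this expression is unchanged whether we flipped $\eta(i)$ or $\xi(i)$ — which is precisely why it is the same quantity in \textnormal{(D3)} and \textnormal{(D4)}. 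The estimate \eqref{EqCtrlBadasseEps} then follows: for fixed $i$ the three events above partition $\SpaceState$, so the $f\,d\nu_N$-masses appearing sum to $\int_{\SpaceState}f\,d\nu_N=1$ (recall $f$ is a density), whence the $i$-th bracket is bounded in absolute value by $\max\prth{\tfrac{1-\gamma}{\gamma},\tfrac{\gamma}{1-\gamma},1}=\tfrac{\max(\gamma,1-\gamma)}{\min(\gamma,1-\gamma)}$; summing over the $N^{\dim-1}$ sites of $\RoadNN$ yields $\verti{\BadasseEpsilon_{\!N}\prth{\alpha,\gamma,f}}\leq c(\gamma)\,\alpha\,N^{\dim-1}$ with $c(\gamma)=\tfrac12\,\tfrac{\max(\gamma,1-\gamma)}{\min(\gamma,1-\gamma)}$.

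The only delicate point — and the step I expect to be the crux — is the exchange part: one must keep precise track of the Radon--Nikodym factor $\nu_N(\eta^i)/\nu_N(\eta)$ (this is where the irreversibility of the exchange dynamics enters and produces the non-vanishing remainder), and then use crucially that $f$ is a \emph{probability} density together with the uniform boundedness of all jump rates to collapse the remainder into an $O(N^{\dim-1})$ contribution rather than the a priori $O(N^{\dim})$ one; the rest is routine changes of variables and the $2\times 2$ case analysis.
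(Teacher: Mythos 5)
Your proposal is correct and covers all five identities together with the bound \eqref{EqCtrlBadasseEps}, but it goes through a genuinely different --- and more unified --- route than the paper. The paper verifies each case separately: it writes out $-D(\sqrt f,\nu_N)$, splits the integral (into two symmetric copies for (D1)/(D2), into the events $\eta(i)=0$ and $\eta(i)=1$ for the flip generators), applies the appropriate change of variable $\sigma\mapsto\sigma^e$ in one of the pieces, and reassembles, using the pointwise expansion $(1-\eta(i)-\xi(i))^2=(\eta(i)-\xi(i))^2+(1-2\eta(i))(1-2\xi(i))$ to isolate $\BadasseEpsilon_{\!N}$ in (D3)/(D4); the resulting expression for $\BadasseEpsilon_{\!N}$ retains square-root cross-terms and is bounded by a Cauchy--Schwarz argument. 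You instead push the algebraic identity $a(c-a)=-\tfrac12(c-a)^2+\tfrac12(c^2-a^2)$ through term by term, which produces $-\tfrac12$ times the matching component of $\IN$ for all five generators at once and reduces the whole lemma to the single remainder $R_L(f)=\tfrac12\int Lf\,d\nu_N$, the stationarity defect of $\nu_N$ under the relevant piece $L$ of $\LN$. This makes transparent the structural reason each case closes: exchange moves in (D1)/(D2) leave the product measure $\nu_N$ invariant, the flip rate in $\LNeumann$ is reversible for $\nu_N$ precisely when $\gamma=b$, while the exchange rate $\alpha(\eta(i)-\xi(i))^2$ is not reversible for any product Bernoulli measure, which is exactly what leaves a nonzero $\BadasseEpsilon_{\!N}$; the equality of the remainders in (D3) and (D4) then drops out from the manifest $\eta(i)\leftrightarrow\xi(i)$ symmetry of the rate and the measure. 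It also simplifies the final estimate: your $\BadasseEpsilon_{\!N}$ is linear in $f$, so \eqref{EqCtrlBadasseEps} follows directly from $\int_{\SpaceState} f\,d\nu_N=1$ with the explicit constant $c(\gamma)=\tfrac12\max(\gamma,1-\gamma)/\min(\gamma,1-\gamma)$, bypassing the paper's Cauchy--Schwarz step. Both arguments are valid; yours trades the paper's case-by-case repetition for a cleaner conceptual account.
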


\medskip

\begin{proof}[Proof of Lemma \ref{LeCtrlDiriForm}, \ref{ITDidi1}]
By writing $\DBulkF$ from \eqref{EqDiriForm} and \eqref{GenBulkField}, we get
\begin{align*}
 	-\DBulkF\prth{\sqrt{f},\nu_{N}}
 	=\;\,
	&{\frac{d}{4}}
	\sum\limits_{\substack{  \Is,\Ks \in \bulkN  \\  \verti{\Is-\Ks}=1}  }
	\int_{\SpaceState}
	\crocHHH{\sqrt{f\prth{\eta^{\Is,\Ks},\xi}}-\sqrt{f\prth{\eta,\xi}}}\sqrt{f\prth{\eta,\xi}} \;
	d\nu_{N}\prth{\eta,\xi}\\
	+\;&{\frac{d}{4}}
	\sum\limits_{\substack{  \Is,\Ks \in \bulkN  \\  \verti{\Is-\Ks}=1}  }
	\int_{\SpaceState}
	\crocHHH{\sqrt{f\prth{\eta^{\Is,\Ks},\xi}}-\sqrt{f\prth{\eta,\xi}}}\sqrt{f\prth{\eta,\xi}} \;
	d\nu_{N}\prth{\eta,\xi}.
\end{align*}
We use then the change of variable $\widetilde{\eta} : = \eta^{\Is,\Ks}$ in the second integral. Since the measure $\nu_{N}$ is a Bernoulli product measure with constant parameter $\gamma$, this change of variable remains transparent for $\nu_{N}$. We thus have
\begin{align*}
 	-\DBulkF\prth{\sqrt{f},\nu_{N}}
 	=\;\,
	&{\frac{d}{4}}
	\sum\limits_{\substack{  \Is,\Ks \in \bulkN  \\  \verti{\Is-\Ks}=1}  }
	\int_{\SpaceState}
	\crocHHH{\sqrt{f\prth{\eta^{\Is,\Ks},\xi}}-\sqrt{f\prth{\eta,\xi}}}\sqrt{f\prth{\eta,\xi}} \;
	d\nu_{N}\prth{\eta,\xi}\\
	\;
	+\;&{\frac{d}{4}}
	\sum\limits_{\substack{  \Is,\Ks \in \bulkN  \\  \verti{\Is-\Ks}=1}  }
	\int_{\SpaceState}
	\crocHHH{\sqrt{f\prth{\eta,\xi}}-\sqrt{f\prth{\eta^{\Is,\Ks},\xi}}}\sqrt{f\prth{\eta^{\Is,\Ks},\xi}} \;
	d\nu_{N}\prth{\eta,\xi}
\end{align*}
from which follows \eqref{EqCtrlD1}.
\end{proof}

\medskip

\begin{proof}[Proof of Lemma \ref{LeCtrlDiriForm}, \ref{ITDidi2}]
Follow the same method as in the proof of \hyperref[ITDidi1]{(D1)}.
\end{proof}

\medskip

\begin{proof}[Proof of Lemma \ref{LeCtrlDiriForm}, \ref{ITDidi3}]
For clarity, let us introduce the notations
\begin{equation}\label{DEF_F_Fi}
F : = \sqrt{f\prth{\eta , \xi}} 
\qquad
\text{and}
\qquad
F^{i} : = \sqrt{f\prth{\eta^{i} , \xi}}.
\end{equation}
By writing $\DBulkF$ from \eqref{EqDiriForm} and \eqref{GenRobinBC}, we get
\begin{align*}
	- \DRobin\prth{\sqrt{f},\nu_{N}} 
	=\;\,
   &\frac{\alpha}{2}
   \sum\limits_{i\in \RoadNN}
   \int_{\SpaceState}
   \prth{\eta\prth{i}-\xi\prth{i}}^{2}
   \prth{F^{i}-F}F \;
   d\nu_{N}\prth{\eta,\xi}\\
   \;
   +\;&\frac{\alpha}{2}
   \sum\limits_{i\in \RoadNN}
   \int_{\substack{\SpaceState\\\eta\prth{i}=0}}
   \prth{\eta\prth{i}-\xi\prth{i}}^{2}
   \prth{F^{i}-F}F \;
   d\nu_{N}\prth{\eta,\xi}\\
   \;
   +\;&\frac{\alpha}{2}
   \sum\limits_{i\in \RoadNN}
   \int_{\substack{\SpaceState\\\eta\prth{i}=1}}
   \prth{\eta\prth{i}-\xi\prth{i}}^{2}
   \prth{F^{i}-F}F \;
   d\nu_{N}\prth{\eta,\xi}.
\end{align*}
We then use the change of variable $\widetilde{\eta} : = \eta^{i}$ in the integral of the second {and third} line so that we have
$$
\nu_{N}\prth{\widetilde{\eta} , \xi} = 
\nu_{N}\prth{\eta , \xi} \timess
\left\lbrace
\begin{array}{ll}
	\gamma / \prth{1-\gamma} & \text{if }\eta\prth{i} = 0, \\[2mm]
	\prth{1-\gamma} / \gamma & \text{if }\eta\prth{i} = 1,
\end{array}
\right .
=
\nu_{N}\prth{\eta , \xi} \timess
\left\lbrace
\begin{array}{ll}
	1 + \frac{2\gamma - 1}{1-\gamma} & \text{if }\eta\prth{i} = 0, \\[2mm]
	1 + \frac{1 - 2\gamma}{\gamma} & \text{if }\eta\prth{i} = 1.
\end{array}
\right . 
$$
This results in 
\begin{align*}
	- \DRobin\prth{\sqrt{f},\nu_{N}} 
	=\;\,
   &\frac{\alpha}{2}
   \sum\limits_{i\in \RoadNN}
   \int_{\SpaceState}
   \prth{\eta\prth{i}-\xi\prth{i}}^{2}
   \prth{F^{i}-F}F \;
   d\nu_{N}\prth{\eta,\xi}\\
   \;
   +\;&\frac{\alpha}{2}
   \sum\limits_{i\in \RoadNN}
   \int_{\SpaceState}
   \prth{1-\eta\prth{i}-\xi\prth{i}}^{2}
   \prth{F-F^{i}}F^{i} \;
   d\nu_{N}\prth{\eta,\xi}\\
   \;
   +\;&\frac{\alpha}{2}
   \sum\limits_{i\in \RoadNN}
   \sfrac{1-\gamma}{2\gamma - 1}
   \int_{\substack{\SpaceState\\\widetilde{\eta}\prth{i}=1}}
   \prth{1-\widetilde{\eta}\prth{i}-\xi\prth{i}}^{2}
   \prth{F-F^{i}}F^{i} \;
   d\nu_{N}\prth{\widetilde{\eta},\xi}\\
   \;
   +\;&\frac{\alpha}{2}
   \sum\limits_{i\in \RoadNN}
   \sfrac{\gamma}{1- 2\gamma}
   \int_{\substack{\SpaceState\\\widetilde{\eta}\prth{i}=0}}
   \prth{1-\widetilde{\eta}\prth{i}-\xi\prth{i}}^{2}
   \prth{F-F^{i}}F^{i} \;
   d\nu_{N}\prth{\widetilde{\eta},\xi}.
\end{align*}
Now observe that
$
\prth{1-\eta\prth{i}-\xi\prth{i}}^{2} =
\prth{\eta\prth{i}-\xi\prth{i}}^{2} +
\prth{1-2\eta\prth{i}}\prth{1-2\xi\prth{i}}
$
and plug this into the second line above.
This directly yields \eqref{EqCtrlD3} with
\begin{align*}
	\BadasseEpsilon_{\!N}\prth{\alpha, \gamma, f}
	=\;\,
   &\frac{\alpha}{2}
   \sum\limits_{i\in \RoadNN}
   \int_{\SpaceState}
   \prth{1-2\eta\prth{i}}\prth{1-2\xi\prth{i}}
   \prth{F-F^{i}}F^{i} \;
   d\nu_{N}\prth{\eta,\xi}\\
   \;
   +\;&\frac{\alpha}{2}
   \sum\limits_{i\in \RoadNN}
   \sfrac{1-\gamma}{2\gamma - 1}
   \int_{\substack{\SpaceState\\\eta\prth{i}=1}}
   \prth{1-\eta\prth{i}-\xi\prth{i}}^{2}
   \prth{F-F^{i}}F^{i} \;
   d\nu_{N}\prth{\eta,\xi}\\
   \;
   +\;&\frac{\alpha}{2}
   \sum\limits_{i\in \RoadNN}
   \sfrac{\gamma}{1- 2\gamma}
   \int_{\substack{\SpaceState\\\eta\prth{i}=0}}
   \prth{1-\eta\prth{i}-\xi\prth{i}}^{2}
   \prth{F-F^{i}}F^{i} \;
   d\nu_{N}\prth{\eta,\xi},
\end{align*}
and the control of $\BadasseEpsilon_{\!N}$ \eqref{EqCtrlBadasseEps} arises from the Cauchy-Schwarz inequality and the fact that $f$ is a density with respect to the Bernoulli product measure $\nu_{N}$.
\end{proof}

\medskip

\begin{proof}[Proof of Lemma \ref{LeCtrlDiriForm}, \ref{ITDidi4}]
Follow the same method as in the proof of \hyperref[ITDidi3]{(D3)}.
\end{proof}

\medskip

\begin{proof}[Proof of Lemma \ref{LeCtrlDiriForm}, \ref{ITDidi5}]
As in the proof of \hyperref[ITDidi3]{(D3)}, we employ the notations $F$ and $F^{i}$ defined in \eqref{DEF_F_Fi}.
By writing $\DNeumann$ from \eqref{EqDiriForm} and \eqref{GenReservoir}, we get
\begin{align*}
	- \DNeumann\prth{\sqrt{f},\nu_{N}} 
	=\;\,
   & b
   \sum\limits_{i\in \HfrontN}
   \int_{\substack{\SpaceState\\\eta\prth{i}=0}}
   \prth{F^{i}F - \sfrac{1}{2}F^{2}} \;
   d\nu_{N}\prth{\eta,\xi} -
   b
   \sum\limits_{i\in \HfrontN}
   \int_{\substack{\SpaceState\\\eta\prth{i}=0}}
   \sfrac{1}{2}F^{2} \;
   d\nu_{N}\prth{\eta,\xi}\\
   \;
   &\hspace{-15mm}+ \prth{1-b}
   \sum\limits_{i\in \HfrontN}
   \int_{\substack{\SpaceState\\\eta\prth{i}=1}}
   \prth{F^{i}F - \sfrac{1}{2}F^{2}} \;
   d\nu_{N}\prth{\eta,\xi} -
   \prth{1-b}
   \sum\limits_{i\in \HfrontN}
   \int_{\substack{\SpaceState\\\eta\prth{i}=1}}
   \sfrac{1}{2}F^{2} \;
   d\nu_{N}\prth{\eta,\xi}.
\end{align*}
We then use the change of variable $\widetilde{\eta} : = \eta^{i}$ in the second and fourth integrals above so that we have
$$
\nu_{N}\prth{\widetilde{\eta} , \xi} = 
\nu_{N}\prth{\eta , \xi} \timess
\left\lbrace
\begin{array}{ll}
	\gamma / \prth{1-\gamma} & \text{if }\eta\prth{i} = 0, \\[2mm]
	\prth{1-\gamma} / \gamma & \text{if }\eta\prth{i} = 1,
\end{array}
\right .
$$
resulting in
\renewcommand{\gap}{0.98}
\begin{align*}
	\scalebox{\gap}{$\displaystyle - \DNeumann\prth{\sqrt{f},\nu_{N}} 
	=\;\, $}
   & \scalebox{\gap}{$\displaystyle b
   \sum\limits_{i\in \HfrontN}
   \int_{\substack{\SpaceState\\\eta\prth{i}=0}}
   \prth{F^{i}F - \sfrac{1}{2}F^{2}} \;
   d\nu_{N}\prth{\eta,\xi} -
   \sfrac{\prth{1-\gamma}b}{\gamma}
   \sum\limits_{i\in \HfrontN}
   \int_{\substack{\SpaceState\\\widetilde \eta \prth{i}=1}}
   \sfrac{1}{2}\prth{F^{i}}^{2} \;
   d\nu_{N}\prth{\widetilde \eta,\xi} $}\\
   \;
   &\scalebox{\gap}{$\displaystyle \hspace{-15mm}+ \prth{1-b}
   \sum\limits_{i\in \HfrontN}
   \int_{\substack{\SpaceState\\\eta\prth{i}=1}}
   \prth{F^{i}F - \sfrac{1}{2}F^{2}} \;
   d\nu_{N}\prth{\eta,\xi} -
   \sfrac{\gamma\prth{1-b}}{1-\gamma}
   \sum\limits_{i\in \HfrontN}
   \int_{\substack{\SpaceState\\\widetilde \eta\prth{i}=0}}
   \sfrac{1}{2}\prth{F^{i}}^{2} \;
   d\nu_{N}\prth{\widetilde \eta,\xi}. $}
\end{align*}
By choosing $\gamma = b$, we get
\begin{align*}
	- \DNeumann\prth{\sqrt{f},\nu_{N}} 
	&=
	-\frac{b}{2}
	\sum\limits_{i\in \HfrontN}
	\int_{\substack{\SpaceState\\\eta\prth{i}=0}}
	\prtHH{F^{2} - 2 F^{i}F + \prth{F^{i}}^{2}}\;
	d\nu_{N}\prth{\eta,\xi}\\
	\;
	& - \frac{1-b}{2}
	\sum\limits_{i\in \HfrontN}
	\int_{\substack{\SpaceState\\\eta\prth{i}=1}}
	\prtHH{F^{2} - 2 F^{i}F + \prth{F^{i}}^{2}}\;
	d\nu_{N}\prth{\eta,\xi}\\[4mm]
	&= - \frac{1}{2}
	\sum\limits_{i\in \HfrontN}
	\int_{\SpaceState}
	\prtH{b\prth{1-\eta\prth{i}} + \prth{1-b}\eta\prth{i}}
	\Prth{F^{i} - F}^{2}\;
d\nu_{N}\prth{\eta,\xi},
\end{align*}
that is \eqref{EqCtrlD5}.
\end{proof}

\medskip

We also need some useful inequalities that are gathered in the following lemma.
\begin{lemma}[Useful inequalities]\label{LeUsefullIneq}
\phantom{A}\\[-3mm]
\refstepcounter{ITUsefullInEq}\label{ITInEq1} 
\begin{itemize}
	\item[\textnormal{\textbf{(I1)}}] For any sequence of positive numbers
	$\prth{a_{N}}$ and
	$\prth{b_{N}}$, we have 
	$$
	\limsup\limits_{N \to \infty}
	\frac{1}{N} \log\prth{a_{N} + b_{N}}
	\leq
	\max \Prth{\limsup\limits_{N \to \infty} \frac{1}{N} \log \prth{a_{N}}, \limsup\limits_{N \to \infty} \frac{1}{N} \log \prth{b_{N}}}. 
	$$
	\refstepcounter{ITUsefullInEq}\label{ITInEq2}
	\item[\textnormal{\textbf{(I2)}}] For any $z\in\mathbb{R}$ we have
	$e^{\verti{z}} \leq e^{z} + e^{-z}$.\\[-2mm]
	\refstepcounter{ITUsefullInEq}\label{ITInEq3}
	\item[\textnormal{\textbf{(I3)}}] For any $X,Y\in\mathbb{R}$ and any $B>0$, we have
	$X Y\leq \frac{1}{2B}X^{2} + \frac{B}{2}Y^{2}$.\\[-2mm]
	\refstepcounter{ITUsefullInEq}\label{ITInEq4}
	\item[\textnormal{\textbf{(I4)}}] For any $X,Y\in\mathbb{R}$, we have
	$\frac{\prth{X+Y}^{2}}{2} \leq X^{2} + Y^{2}$.
\end{itemize}
\end{lemma}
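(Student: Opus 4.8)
The plan is to dispatch the four inequalities independently, each by a short elementary argument; none of them relies on the probabilistic machinery of the paper, so this is really a collection of standard facts recorded for later use.

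For \textnormal{\ref{ITInEq1}} I would begin from the crude bound $a_{N}+b_{N}\leq 2\max(a_{N},b_{N})$, valid since both sequences are positive. Taking logarithms and dividing by $N$ gives $\tfrac1N\log(a_{N}+b_{N})\leq \tfrac{\log 2}{N}+\max\!\big(\tfrac1N\log a_{N},\,\tfrac1N\log b_{N}\big)$. Letting $N\to\infty$, the term $\tfrac{\log 2}{N}$ vanishes, and it remains to invoke the general fact that $\limsup_{N}\max(x_{N},y_{N})\leq\max(\limsup_{N}x_{N},\limsup_{N}y_{N})$ — which itself holds because, for every $\varepsilon>0$, the sequence $\max(x_{N},y_{N})$ is eventually bounded above by $\max(\limsup_{N}x_{N},\limsup_{N}y_{N})+\varepsilon$. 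This yields \textnormal{\ref{ITInEq1}}.

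For \textnormal{\ref{ITInEq2}} I would simply split on the sign of $z$: if $z\geq 0$ then $e^{\verti{z}}=e^{z}\leq e^{z}+e^{-z}$ since $e^{-z}>0$, and symmetrically if $z<0$ then $e^{\verti{z}}=e^{-z}\leq e^{z}+e^{-z}$. For \textnormal{\ref{ITInEq3}} and \textnormal{\ref{ITInEq4}} I would use completion of a square: expanding $0\leq\big(B^{-1/2}\verti{X}-B^{1/2}\verti{Y}\big)^{2}$ gives $2\verti{X}\verti{Y}\leq B^{-1}X^{2}+BY^{2}$, and since $XY\leq\verti{X}\verti{Y}$, dividing by $2$ produces \textnormal{\ref{ITInEq3}}; taking $B=1$ in this computation gives $2XY\leq X^{2}+Y^{2}$, so $(X+Y)^{2}=X^{2}+2XY+Y^{2}\leq 2X^{2}+2Y^{2}$, which is \textnormal{\ref{ITInEq4}} after dividing by $2$.

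There is no genuine obstacle in this lemma; all four statements are classical. The only point that warrants a sentence of explanation rather than a one-line computation is the interchange of $\limsup$ and $\max$ used in \textnormal{\ref{ITInEq1}}, which I would spell out as above.
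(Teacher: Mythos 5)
Your proof is correct. The paper itself does not supply a proof of this lemma — it is stated without argument as a toolbox of elementary inequalities — and the arguments you give are exactly the standard ones: the $2\max$ bound plus $\tfrac{\log 2}{N}\to 0$ for \textnormal{\ref{ITInEq1}}, a sign split for \textnormal{\ref{ITInEq2}}, and completion of a square (Young's inequality) for \textnormal{\ref{ITInEq3}} with \textnormal{\ref{ITInEq4}} following as the $B=1$ specialization. The one step you rightly flag as deserving a sentence, the passage from $\limsup\max$ to $\max\limsup$, is handled correctly via the $\varepsilon$-argument; indeed equality holds there, though you only need the inequality.
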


\subsection{Proof of the upper Replacement lemma}\label{Appendix_Repl_LE}

~ \vspace{-5mm}

\begin{proof}[Proof of Lemma \ref{LeRemplacUP} (Replacement at the upper boundary)]
	Consider the term under the integral in \eqref{EqLimRemplacUP}, namely
	\begin{equation}\label{EqPrReUP00}
	A_{N,\varepsilon}\prth{G\prth{s,\sfrac{\I}{N}},\eta_{s}} : =
			\frac{1}{N^{\dim-1}}
			\sum\limits_{\Is\in\HfrontN}^{}
			G\prth{s,\sfrac{\I}{N}}\Croch{\eta_{s}^{\varepsilon N}\prth{\I}-\eta_{s}\prth{\I}}.
	\end{equation}
	We develop
	$\eta_{s}^{\varepsilon N}\prth{\I}$
	in
	\eqref{EqPrReUP00}
	with
	\eqref{EqDefEtaBox} and express that {$\I+\K$, with $\K=\prth{k,\ell}$,} browses $\Lambda_{\Is}^{\varepsilon N}$.
	This yields
	\begin{equation}\label{EqPrReUP0}
	\smaller{0.94}{$
	A_{N,\varepsilon}\prth{G\prth{s,\sfrac{\I}{N}},\eta_{s}} =
	\sum\limits_{k\in \intervalleff{-\lceil\varepsilon N\rceil}{\lceil\varepsilon N\rceil}^{\dim-1}}\;
	\sum\limits_{\ell = \lceil N-1-\varepsilon N \rceil}^{N-1}
	\frac{c_{N,\varepsilon}}{N^{\dim-1}}
	\sum\limits_{\Is\in\HfrontN}^{}
	G\prth{s,\sfrac{\I}{N}}\Croch{\eta_{s}\prth{\I+\K}-\eta_{s}\prth{\I}}.
	$}
	\end{equation}
	In \eqref{EqPrReUP0}, focus on
	\begin{equation}\label{EqPrReUP1}
	\smaller{0.90}{$
	\displaystyle
	\sum\limits_{\Is\in\HfrontN}^{}
	G\prth{s,\sfrac{\I}{N}}\eta_{s}\prth{\I+\K} =
	\sum\limits_{\Is\in\HfrontN}^{}
	\Croch{G\prth{s,\sfrac{\I}{N}}-G\prth{s,\sfrac{\I+\K}{N}}}\eta_{s}\prth{\I+\K} +
	\sum\limits_{\Is\in\HfrontN}^{}
	G\prth{s,\sfrac{\I+\K}{N}}\eta_{s}\prth{\I+\K}.
	$}
	\end{equation}
	Since
	$G\in{\Cb}^{1,2}\prth{\intervalleff{0}{T} \times \barbulk}$,
	the Mean Value Theorem allows to control, in the first sum of \eqref{EqPrReUP1},
	\begin{equation}\label{EqPrReUP2}
	\verti{G\prth{s,\sfrac{\I}{N}}-G\prth{s,\sfrac{\I+\K}{N}}} \leq
	p \varepsilon
	\sup\limits_{s \in \intervalleff{0}{T}} \vertii{\nabla G\prth{s,\point}}{L^{\infty}\prth{\bulk}}.
	\end{equation}
	For the second sum in \eqref{EqPrReUP1}, notice that we have, thanks to the periodicity of the torus $\TT_{N}$ and the Mean Value Theorem,
	\begin{align}
	\sum\limits_{\Is\in\HfrontN}^{}
	G\prth{s,\sfrac{\I+\K}{N}}\eta_{s}\prth{\I+\K}
		&= \sum\limits_{i\in\RoadN}^{}
		G\prth{s,\sfrac{i+k}{N},\sfrac{\ell}{N}}\eta_{s}\prth{i+k,\ell}\nonumber\\
		&= \sum\limits_{i\in\RoadN}^{}
		G\prth{s,\sfrac{i}{N},\sfrac{\ell}{N}}\eta_{s}\prth{i,\ell}\nonumber\\
		&= \sum\limits_{i\in\RoadN}^{}
		\Croch{G\prth{s,\sfrac{i}{N},\sfrac{N-1}{N}}+\frac{N-1-\ell}{N} \, C_{1}\prth{G,N}}\eta_{s}\prth{i,\ell}\label{EqPrReUP3},
	\end{align}
	where there is $y\in\intervalleoo{0}{1}$ such that
	\begin{equation}\label{EqPrReUP4}
	\verti{C_{1}\prth{G,N}} =
	\verti{\partial_{y}G\prth{s,\sfrac{i}{N},y}} \leq
	\sup\limits_{s \in \intervalleff{0}{T}} \vertii{\nabla G\prth{s,\point}}{L^{\infty}\prth{\bulk}}.
	\end{equation}
	Now using \eqref{EqPrReUP1} and \eqref{EqPrReUP3} into the expression of $A_{N,\varepsilon}$ in \eqref{EqPrReUP0}, we obtain 
	\renewcommand{\gap}{-35mm}
	\renewcommand{\gapp}{0.92}
	\begin{align}
	A_{N,\varepsilon}\prth{G\prth{s,\sfrac{\I}{N}},\eta_{s}}
		&=\nonumber\\
		&\hspace{\gap}\phantom{+ \quad}
		\scalebox{\gapp}{$\displaystyle	
	\sum\limits_{k\in \intervalleff{-\lceil\varepsilon N\rceil}{\lceil\varepsilon N\rceil}^{\dim-1}}\;
	\sum\limits_{\ell = \lceil N-1-\varepsilon N \rceil}^{N-1}
	\frac{c_{N,\varepsilon}}{N^{\dim-1}}
	\sum\limits_{\Is\in\HfrontN}^{}
	\Croch{G\prth{s,\sfrac{\I}{N}}-G\prth{s,\sfrac{\I+\K}{N}}}
	\eta_{s}\prth{\I+\K}
	$}\label{EqPrReUP5}\\
		 &\hspace{\gap}
		 \scalebox{\gapp}{$\displaystyle	
	+ \quad \sum\limits_{k\in \intervalleff{-\lceil\varepsilon N\rceil}{\lceil\varepsilon N\rceil}^{\dim-1}}\;
	\sum\limits_{\ell = \lceil N-1-\varepsilon N \rceil}^{N-1}
	\frac{c_{N,\varepsilon}}{N^{\dim-1}}
	\sum\limits_{i\in\RoadN}^{}
	\frac{N-1-\ell}{N} \, C_{1}\prth{G,N}
	\eta_{s}\prth{i,\ell}
	$}\label{EqPrReUP6}\\
		 &\hspace{\gap}
		 \scalebox{\gapp}{$\displaystyle	
	+ \quad \sum\limits_{k\in \intervalleff{-\lceil\varepsilon N\rceil}{\lceil\varepsilon N\rceil}^{\dim-1}}\;
	\sum\limits_{\ell = \lceil N-1-\varepsilon N \rceil}^{N-1}
	\frac{c_{N,\varepsilon}}{N^{\dim-1}}
	\sum\limits_{i\in\RoadN}^{}
	G\prth{s,\sfrac{i}{N},\sfrac{N-1}{N}}
	\Croch{\eta_{s}\prth{i,\ell}-\eta_{s}\prth{i,N-1}}.
	$}\label{EqPrReUP7}
	\end{align}
	Therefore we have
	\begin{align}
		 \EsperanceMuN
	\crocHHHH{
		\vertIII{
			\int_{0}^{t}
			\frac{1}{N^{\dim-1}}
			\sum\limits_{\Is\in\HfrontN}^{}
			G\prth{s,\sfrac{\I}{N}}\Croch{\eta_{s}^{\varepsilon N}\prth{\I}-\eta_{s}\prth{\I}}
			ds
		}
	} &\leq \nonumber\\
	&\hspace{-70mm}\EsperanceMuN
	\crocHHHH{
		\vertIII{
			\int_{0}^{t}
			\eqref{EqPrReUP5}\;
			ds
		}
	} +
	\EsperanceMuN
	\crocHHHH{
		\vertIII{
			\int_{0}^{t}
			\eqref{EqPrReUP6}\;
			ds
		}
	} +
	\EsperanceMuN
	\crocHHHH{
		\vertIII{
			\int_{0}^{t}
			\eqref{EqPrReUP7}\;
			ds
		}
	}.\label{EqPrReUP8}
	\end{align}
	In \eqref{EqPrReUP8}, the vanishing of the two first expectations is a straight consequence of the controls \eqref{EqPrReUP2} and \eqref{EqPrReUP4}.
	Indeed, for the first expectation in \eqref{EqPrReUP8}, we have
	\renewcommand{\gap}{-30mm}
	\begin{align*}
		 \EsperanceMuN
	\crocHHHH{
		\vertIII{
			\int_{0}^{t}
			\eqref{EqPrReUP5}\;
			ds
		}
	} &\\
		 &\hspace{\gap}= \scalebox{0.9}{$\displaystyle
	\EsperanceMuN
	\crocHHHH{
		\vertIII{
			\int_{0}^{t}
			\sum\limits_{k\in \intervalleff{-\lceil\varepsilon N\rceil}{\lceil\varepsilon N\rceil}^{\dim-1}}\;
	\sum\limits_{\ell = \lceil N-1-\varepsilon N \rceil}^{N-1}
	\frac{c_{N,\varepsilon}}{N^{\dim-1}}
	\sum\limits_{\Is\in\HfrontN}^{}
	\Croch{G\prth{s,\sfrac{\I}{N}}-G\prth{s,\sfrac{\I+\K}{N}}}\eta_{s}\prth{\I+\K}\;
			ds
		}
	}
	$}\\
		 &\hspace{\gap}\leq 
			\int_{0}^{t} 
			\sum\limits_{k\in \intervalleff{-\lceil\varepsilon N\rceil}{\lceil\varepsilon N\rceil}^{\dim-1}}\;
	\sum\limits_{\ell = \lceil N-1-\varepsilon N \rceil}^{N-1}
	\frac{c_{N,\varepsilon}}{N^{\dim-1}}
	\sum\limits_{\Is\in\HfrontN}^{}
	\underbrace{\vertII{G\prth{s,\sfrac{\I}{N}}-G\prth{s,\sfrac{\I+\K}{N}}}}_{\text{control this with \eqref{EqPrReUP2}}} \; ds\\
		&\hspace{\gap}\leq T p \varepsilon
	\sup\limits_{s \in \intervalleff{0}{T}} \vertii{\nabla G\prth{s,\point}}{L^{\infty}\prth{\bulk}}
	\times
	\underbrace{\sum\limits_{k\in \intervalleff{-\lceil\varepsilon N\rceil}{\lceil\varepsilon N\rceil}^{\dim-1}}\;
	\sum\limits_{\ell = \lceil N-1-\varepsilon N \rceil}^{N-1}
	c_{N,\varepsilon}}_{ \,  \smaller{0.8}{$=1$} }
	\times
	\underbrace{\frac{1}{N^{\dim-1}}
	\sum\limits_{\Is\in\HfrontN}^{} \hspace{-1mm}1}_{ \,  \smaller{0.8}{$=1$}}\\
		 &\hspace{\gap}= T p \varepsilon
	\sup\limits_{s \in \intervalleff{0}{T}} \vertii{\nabla G\prth{s,\point}}{L^{\infty}\prth{\bulk}}
	\end{align*}
	that vanishes as $\varepsilon\to 0$.
	Similarly,
	\renewcommand{\gap}{-30mm}
	\begin{align*}
		 \EsperanceMuN
	\crocHHHH{
		\vertIII{
			\int_{0}^{t}
			\eqref{EqPrReUP6}\;
			ds
		}
	} &\\[-2mm]
		 &\hspace{\gap}= \scalebox{0.9}{$\displaystyle
	\EsperanceMuN
	\crocHHHH{
		\vertIII{
			\int_{0}^{t}
			\sum\limits_{k\in \intervalleff{-\lceil\varepsilon N\rceil}{\lceil\varepsilon N\rceil}^{\dim-1}}\;
	\sum\limits_{\ell = \lceil N-1-\varepsilon N \rceil}^{N-1}
	\frac{c_{N,\varepsilon}}{N^{\dim-1}}
	\sum\limits_{i\in\RoadN}^{}
	\overbrace{\frac{N-1-\ell}{N}}^{\raisebox{0.8mm}{$\smaller{0.8}{$\leq \varepsilon$}$}} \, C_{1}\prth{G,N}\eta_{s}\prth{i,\ell}\;
			ds
		}
	}
	$}\\
		 &\hspace{\gap}\leq 
			T\varepsilon
			\sum\limits_{k\in \intervalleff{-\lceil\varepsilon N\rceil}{\lceil\varepsilon N\rceil}^{\dim-1}}\;
	\sum\limits_{\ell = \lceil N-1-\varepsilon N \rceil}^{N-1}
	\frac{c_{N,\varepsilon}}{N^{\dim-1}}
	\sum\limits_{\Is\in\HfrontN}^{}
	\;
	\underbrace{\verti{C_{1}\prth{G,N}}}_{\text{control this with \eqref{EqPrReUP4}}}\\
		 &\hspace{\gap}\leq T \varepsilon
		 \sup\limits_{s \in \intervalleff{0}{T}} \vertii{\nabla G\prth{s,\point}}{L^{\infty}\prth{\bulk}}
	\end{align*}
	that also vanishes as $\varepsilon\to 0$.
	The last expectation outlined in \eqref{EqPrReUP8}, namely
	\begin{align}
	\hspace{-3mm}\EsperanceMuN
	\crocHHHH{\vertIII{\int_{0}^{t}\eqref{EqPrReUP7}\;ds}}
		&\nonumber\\
		& \hspace{-35mm}
		=	\smaller{0.88}{$\displaystyle
			\EsperanceMuN
			\crocHHHH{\vertIII{\int_{0}^{t}
			\sum\limits_{k\in \intervalleff{-\lceil\varepsilon N\rceil}{\lceil\varepsilon N\rceil}^{\dim-1}}\;
			\sum\limits_{\ell = \lceil N-1-\varepsilon N \rceil}^{N-1}
			\frac{c_{N,\varepsilon}}{N^{\dim-1}}
			\sum\limits_{i\in\RoadN}^{}
			G\prth{s,\sfrac{i}{N},\sfrac{N-1}{N}}
			\Croch{\eta_{s}\prth{i,\ell}-\eta_{s}\prth{i,N-1}}
			\,	ds}}$}
			\label{EqPrReUP9}
	\end{align}
	captures key information about the $y$-direction of $\eta_{s}$ in the region $\Lambda_{\Is}^{\varepsilon N}$.
	The vanishing of this expectation is actually the core of this proof.
	
	Given any fixed $a>0$ (which will eventually be increased to $+\infty$), we use the entropy inequality (see \cite[Appendix 1, Section 8]{KipnisScaling99}) on \eqref{EqPrReUP9}. This yields
	\begin{equation}
	\scalebox{0.92}{$\displaystyle
	\EsperanceMuN
	\crocHHHH{\vertIII{\int_{0}^{t}\eqref{EqPrReUP7}\;ds}}
	\leq
	\frac{1}{aN^{\dim}}
	\log
	\crocHHHH{
	\EsperanceNuN
	\crocHHH{
	\exp
	\Prth{aN^{\dim} \vertII{\int_{0}^{t}\eqref{EqPrReUP7}\; ds}}
	}
	}
	+ \frac{1}{aN^{\dim}}
	\entropie\prtH{\mu_{N}|\nu_{N}}.
	$}\label{EqPrReUP10}
	\end{equation}
	Thanks to the control we have on $\entropie\prth{\mu_{N}|\nu_{N}}$, as outlined in \eqref{EqContrH}, the second term in \eqref{EqPrReUP10} is bounded by $C_{0}/a$ {that vanishes when $a\to \infty$.
	Therefore, to conclude the proof, it is enough to show that, for any $a>0$,
	$$
	\lim_{\varepsilon\to 0}\lim_{N\to \infty}
	\frac{1}{aN^{\dim}}
	\log
	\crocHHHH{
	\EsperanceNuN
	\crocHHH{
	\exp
	\Prth{aN^{\dim} \vertII{\int_{0}^{t}\eqref{EqPrReUP7}\; ds}}
	}
	} =0.
	$$
	Now, we remark that we can get rid of the absolute value into the exponential of this last expression.} Indeed, by combining \ref{ITInEq2} and \ref{ITInEq1} in Lemma \ref{LeUsefullIneq} with
	$$
	z = aN^{\dim} \int_{0}^{t}\eqref{EqPrReUP7}\; ds,
	\qquad
	a_{n} = \EsperanceNuN\prth{e^{z}},
	\qquad
	\text{and}
	\qquad
	b_{n} = \EsperanceNuN\prth{e^{-z}},
	$$
	we have
	$$
	\scalebox{0.95}{$\displaystyle
	\limsup\limits_{N \rightarrow \infty} 
	\frac{1}{aN^{\dim}}
	\log
	\prtHH{
	\EsperanceNuN
	\prth{
	e^{\verti{z}}
	}
	}
	\leq \max
	\Bigg[
	\limsup\limits_{N \rightarrow \infty} 
	\frac{1}{aN^{\dim}}
	\log
	\prtHH{
	\EsperanceNuN
	\prth{
	e^{z}
	}
	}	, 	
	\limsup\limits_{N \rightarrow \infty} 
	\frac{1}{aN^{\dim}}
	\log
	\prtHH{
	\EsperanceNuN
	\prth{
	e^{-z}
	}
	}\Bigg]
	$},
	$$
	and therefore (up to take $-G$ instead of $G$) we only have to prove the vanishing of
	$$
	\frac{1}{aN^{\dim}}
	\log
	\prtHH{
	\EsperanceNuN
	\prth{
	e^{z}
	}
	}
	=
	\frac{1}{aN^{\dim}}
	\log
	\crocHHHH{
	\EsperanceNuN
	\crocHHH{
	\exp
	\Prth{aN^{\dim} {\int_{0}^{t}\eqref{EqPrReUP7}\; ds}}
	}
	}
	.
	$$

	We use now the Feynman-Kac's inequality
	---
	see \cite[Lemma 7.3 in Appendix]{BaldassoExclusion17}
	---
	with the operator $\LN + a N^{\dim}\,V\prth{s,\point}$ where
	\begin{equation}\label{EqPrReUP11}
	\hspace{-2mm}
	\scalebox{0.94}{$\displaystyle
	V\prth{s,\point} : \eta\mapsto
	\hspace{-2mm}
	\sum\limits_{k\in \intervalleff{-\lceil\varepsilon N\rceil}{\lceil\varepsilon N\rceil}^{\dim-1}}\;
	\sum\limits_{\ell = \lceil N-1-\varepsilon N \rceil}^{N-1}
	\frac{c_{N,\varepsilon}}{N^{\dim-1}}
	\sum\limits_{i\in\RoadN}^{}
	G\prth{s,\sfrac{i}{N},\sfrac{N-1}{N}}
	\Croch{\eta\prth{i,\ell}-\eta\prth{i,N-1}}.
	$}
	\end{equation}
	With the variational formula (Rayleigh quotient) for the largest (principal) eigenvalue of the operator $\LN + a N^{\dim}\,V\prth{s,\point}$,
	we are led to
	\begin{align}
		 \frac{1}{aN^{\dim}}
	\log
	\crocHHHH{
	\EsperanceNuN
	\crocHHH{
	\exp
	\Prth{aN^{\dim} {\int_{0}^{t}\eqref{EqPrReUP7}\; ds}}
	}
	} &\leq \nonumber\\[2mm]
		&\hspace{-60mm}
		\int_{0}^{t}
		\raisebox{1mm}{$\displaystyle
	\sup\limits_{\substack{f \text{ density}  \\ \text{with respect}\\ \text{to	}\nu_{N}}}
	$}
		\left\lbrace \int_{\SpaceState}^{} V\prth{s,\eta} \, f\prth{\eta,\xi} \; 	
		d\nu_{N}\prth{\eta,\xi} +
		\frac{1}{aN^{\dim}}
		\bk{\LN \sqrt{f},\sqrt{f}}{\hspace{-0.1mm}\nu_{\scalebox{0.4}{$N$}}}
		 \right\rbrace \,
		 ds. \label{EqPrReUP12a}
	\end{align}
	Now focus on the integral term into the supremum in \eqref{EqPrReUP12a}.
	By using a telescopic sum to write the differences $\eta\prth{i,\ell}-\eta\prth{i,N-1}$ in $V\prth{s,\eta}$ as defined in \eqref{EqPrReUP11}, we get
	\begin{align}
		 \int_{\SpaceState}^{} V\prth{s,\eta} \, f\prth{\eta,\xi} \; 	
		d\nu_{N}\prth{\eta,\xi} &=\nonumber\\
		&\hspace{-40mm}
		\sum\limits_{k\in \intervalleff{-\lceil\varepsilon N\rceil}{\lceil\varepsilon N\rceil}^{\dim-1}}\;
		\sum\limits_{\ell = \lceil N-1-\varepsilon N \rceil}^{N-1}
		\frac{c_{N,\varepsilon}}{N^{\dim-1}}
		\sum\limits_{i\in\RoadN}^{}
		G\prth{s,\sfrac{i}{N},\sfrac{N-1}{N}}\nonumber\\
		& \hspace{0mm}
		\times
		\sum\limits_{m = \ell}^{N-2}
		\int_{\SpaceState}^{}
		\Croch{
		\eta\prth{i,m}
		-
		\eta\prth{i,m+1}
		} \, f\prth{\eta,\xi} \; 	
		d\nu_{N}\prth{\eta,\xi}	.\label{EqPrReUP12}
	\end{align}
	In \eqref{EqPrReUP12}, we focus on
	\begin{align}
		 \int_{\SpaceState}^{}
		\Croch{
		\eta\prth{i,m}
		-
		\eta\prth{i,m+1}
		} \, f\prth{\eta,\xi} \; 	
		d\nu_{N}\prth{\eta,\xi}
		&=\nonumber\\
	&\hspace{-50	mm}\int_{\SpaceState}^{}
		\eta\prth{i,m} \, f\prth{\eta,\xi} \; 	
		d\nu_{N}\prth{\eta,\xi}
	-
	\int_{\SpaceState}^{}
		\eta\prth{i,m+1}
		 \, f\prth{\eta,\xi} \; 	
		d\nu_{N}\prth{\eta,\xi}.\label{EqPrReUP13}
	\end{align}
	In the first integral in \eqref{EqPrReUP13}, we perform the change of variable
	$$\eta\mapsto\eta^{\prth{i,m},\prth{i,m+1}} = : \eta^{\Ks_{0},\Ks_{1}}.$$
	This gives
	\begin{align}
	\scalebox{1}{$\displaystyle
		 \int_{\SpaceState}^{}
	\Croch{
	\eta\prth{i,m}
	-
	\eta\prth{i,m+1}
	} \, f\prth{\eta,\xi} \; 	
	d\nu_{N}\prth{\eta,\xi}
	$}&\nonumber\\
		&\hspace{-65mm}
		\scalebox{1}{$\displaystyle =
		\int_{\SpaceState}^{}
		\eta\prth{\K_{1}} 
		\Croch{f\prth{\eta^{\Ks_{0},\Ks_{1}},\xi} -  f\prth{\eta,\xi}}	
		d\nu_{N}\prth{\eta,\xi}
		$}\nonumber\\
		&\hspace{-65mm}
		\scalebox{1}{$\displaystyle =
		\int_{\SpaceState}^{}
		\eta\prth{\K_{1}} 
		\underbrace{\Croch{\sqrt{f\prth{\eta^{\Ks_{0},\Ks_{1}},\xi}} -  \sqrt{f\prth{\eta,\xi}}}
		\Croch{\sqrt{f\prth{\eta^{\Ks_{0},\Ks_{1}},\xi}} +  \sqrt{f\prth{\eta,\xi}}}	}_{\text{use \ref{ITInEq3} at this point}	}
		d\nu_{N}\prth{\eta,\xi}.
		$}\nonumber\\
		&\hspace{-65mm}
		\scalebox{1}{$\displaystyle \leq 
		\frac{B}{2}
		\int_{\SpaceState}^{}
		\eta\prth{\K_{1}} 
		\Croch{\sqrt{f\prth{\eta^{\Ks_{0},\Ks_{1}},\xi}} -  \sqrt{f\prth{\eta,\xi}}}^{2}
		d\nu_{N}\prth{\eta,\xi}	$}\label{EqPrReUP14}\\
		&\hspace{-40mm}
		\scalebox{1}{$\displaystyle
		+	\frac{1}{2B}
		\int_{\SpaceState}^{}
		\eta\prth{\K_{1}}
		\Croch{\sqrt{f\prth{\eta^{\Ks_{0},\Ks_{1}},\xi}} +  \sqrt{f\prth{\eta,\xi}}}^{2}
		d\nu_{N}\prth{\eta,\xi},
		$}\label{EqPrReUP15}
	\end{align}
	where $B>0$ is to be determined later in this proof.
	In \eqref{EqPrReUP15} we now use \ref{ITInEq4} and the fact that $f$ is a density with respect to $\nu_{N}$ to write
	\begin{align}
	\scalebox{1}{$\displaystyle
		 \int_{\SpaceState}^{}
	\Croch{
	\eta\prth{i,m}
	-
	\eta\prth{i,m+1}
	} \, f\prth{\eta,\xi} \; 	
	d\nu_{N}\prth{\eta,\xi}
	$}&\nonumber\\
		&\hspace{-50mm}
		\scalebox{1}{$\displaystyle \leq 
		\frac{B}{2}
		\int_{\SpaceState}^{}
		\eta\prth{\K_{1}} 
		\Croch{\sqrt{f\prth{\eta^{\Ks_{0},\Ks_{1}},\xi}} -  \sqrt{f\prth{\eta,\xi}}}^{2}
		d\nu_{N}\prth{\eta,\xi} + \frac{2}{B}.$}\label{EqPrReUP16}
	\end{align}
	Incorporating \eqref{EqPrReUP16} into \eqref{EqPrReUP12} yields
	\begin{align}
		 \int_{\SpaceState}^{} V\prth{s,\eta} \, f\prth{\eta,\xi} \; 	
		d\nu_{N}\prth{\eta,\xi} &\nonumber\\
		&\hspace{-45mm}
		\scalebox{1}{$\displaystyle
	\leq \sum\limits_{k\in \intervalleff{-\lceil\varepsilon N\rceil}{\lceil\varepsilon N\rceil}^{\dim-1}}\;
		\sum\limits_{\ell = \lceil N-1-\varepsilon N \rceil}^{N-1}
		\frac{c_{N,\varepsilon}}{N^{\dim-1}}
		\sum\limits_{i\in\RoadN}^{}
		G\prth{s,\sfrac{i}{N},\sfrac{N-1}{N}}
	$}\nonumber\\
		& \hspace{-20mm}
		\scalebox{1}{$\displaystyle
	\times
		\sum\limits_{m = \ell}^{N-2}
		\Croch{
		\frac{B}{2}
		\int_{\SpaceState}^{}
		\eta\prth{\K_{1}} 
		\Croch{\sqrt{f\prth{\eta^{\Ks_{0},\Ks_{1}},\xi}} -  \sqrt{f\prth{\eta,\xi}}}^{2}
		d\nu_{N}\prth{\eta,\xi} + \frac{2}{B}	}
	$}\nonumber\\[2mm]
		&\hspace{-45mm}
	\scalebox{0.97}{$\displaystyle
		\leq 
		\vertii{G\prth{s,\point}}{L^{\infty}\prth{\bulk}}
		\Croch{\frac{B}{2N^{\dim-1}}
		\sum\limits_{i\in\RoadN}^{}
		\sum\limits_{m = 0}^{N-2}
		\int_{\SpaceState}^{}
		\Croch{\sqrt{f\prth{\eta^{\Ks_{0},\Ks_{1}},\xi}} -  \sqrt{f\prth{\eta,\xi}}}^{2}
		d\nu_{N}\prth{\eta,\xi} +
		\frac{2\varepsilon N}{B}
		}$}\nonumber\\[1mm]
		 \label{EqPrReUP17a}
	\end{align}
	where we used that $\sum\limits_{k}\sum\limits_{\ell} c_{N,\varepsilon} = 1$, $\frac{1}{N^{p-1}}\sum\limits_{i}1 = 1$, and
	$$N-1-\ell \leq \varepsilon N \qquad \text{for all }\ell \in \intervalleE{\lceil N-1-\varepsilon N \rceil}{N-1}.$$
	Now observe that we can make the Dirichlet form $\DBulkF$ appear in \eqref{EqPrReUP17a} thanks  to \hyperref[ITDidi1]{(D1)}. This results in
	\begin{equation}\label{EqPrReUP17}
		\scalebox{1}{$\displaystyle
		\int_{\SpaceState}^{} V\prth{s,\eta} \, f\prth{\eta,\xi} \; 	
		d\nu_{N}\prth{\eta,\xi}
			\leq 
			\vertii{G\prth{s,\point}}{L^{\infty}\prth{\bulk}}
			\Croch{\frac{{2}B}{dN^{\dim-1}}
			\;
			\DBulkF\prth{\sqrt{f},\nu_{N}}
			+
			\frac{2\varepsilon N}{B}
			}
		$}.
	\end{equation}
	By gathering \eqref{EqPrReUP10}, \eqref{EqPrReUP12a} and \eqref{EqPrReUP17}, we obtain
	\renewcommand{\gap}{-25mm}
	\begin{align}
	\scalebox{0.8}{$\displaystyle \EsperanceMuN
	\crocHHHH{\vertIII{\int_{0}^{t}\eqref{EqPrReUP7}\;ds}} $}
		&\nonumber\\
		& \hspace{\gap}
	\scalebox{0.8}{$\displaystyle
		\leq
		\int_{0}^{t}
		\raisebox{1mm}{$\displaystyle
		\sup\limits_{\substack{f \text{ density}  \\ \text{with respect}\\ \text{to	}\nu_{N}}}
		$}
		\left\lbrace 	\vertii{G\prth{s,\point}}{L^{\infty}\prth{\bulk}}
		\Croch{\frac{2B}{dN^{\dim-1}}
		\;
		\DBulkF\prth{\sqrt{f},\nu_{N}}
		+
		\frac{2\varepsilon N}{B}
		} +
		\frac{1}{aN^{\dim}}
		\bk{\LN\sqrt{f},\sqrt{f}}{\hspace{-0.1mm}\nu_{\scalebox{0.4}{$N$}}}
		\right\rbrace \,
		ds
		+
		\frac{C_{0}}{a}
	$}\nonumber\\
		& \hspace{\gap}
	\scalebox{0.8}{$\displaystyle
		\leq
		\int_{0}^{t}
		\crocHH{\raisebox{1mm}{$\displaystyle
		\sup\limits_{\substack{f \text{ density}  \\ \text{with respect}\\ \text{to	}\nu_{N}}}
		$}
		\left\lbrace 	
		\Croch{\frac{{2B}\vertii{G\prth{s,\point}}{L^{\infty}\prth{\bulk}}}{dN^{\dim-1}}
		-
		\frac{1}{aN^{\dim-2}}
		}
		\DBulkF\prth{\sqrt{f},\nu_{N}}
		\right\rbrace
		+
		\frac{2\varepsilon N \vertii{G\prth{s,\point}}{L^{\infty}\prth{\bulk}}}{B}} \;
		ds
		+
		\frac{C_{0} + 2 c\prth{b} \alpha}{a},
	$}\label{EqPrReUP18}
	\end{align}
	{where we used the whole Lemma \ref{LeCtrlDiriForm} {(with $\gamma=b$)}
	and the fact that $-\frac{1}{2}\IBulkR$, $-\frac{1}{2}\INeumann$, $-\frac{1}{2}\IReaction$ and $-\frac{1}{2}\IRobin$ are nonpositive
	to provide the last inequality.}
	At this point, we make the choice
	$B : = dN/\prth{2a\vertii{G\prth{s,\point}}{L^{\infty}\prth{\bulk}}}$
	to cancel the supremum in \eqref{EqPrReUP18}, so that we are left with
	\begin{equation}
	\EsperanceMuN
	\crocHHHH{\vertIII{\int_{0}^{t}\eqref{EqPrReUP7}\;ds}}\leq
	\frac{4T\varepsilon a }{d}
	\sup\limits_{s \in \intervalleff{0}{T}} \vertii{G\prth{s,\point}}{L^{\infty}\prth{\bulk}}^{2}
	+
	\frac{C_{0} + 2 c\prth{b} \alpha}{a}.
	\label{EqPrReUP19}
	\end{equation}
	By letting $\varepsilon \to 0$ and then $a\to \infty$, we finally get the vanishing of
	$\EsperanceMuN
	\croch{\vert{\int_{0}^{t}\eqref{EqPrReUP7}\;ds}\vert}$
	which, combined with those of 
	$\EsperanceMuN
	\croch{\vert{\int_{0}^{t}\eqref{EqPrReUP5}\;ds}\vert}$
	and 
	$\EsperanceMuN
	\croch{\vert{\int_{0}^{t}\eqref{EqPrReUP6}\;ds}\vert}$
	in \eqref{EqPrReUP8}, completes this proof.
	\end{proof}

\subsection{Proof of the energy estimate}\label{Appendix_Energy}

~ \vspace{-5mm}

\begin{proof}[Proof of Lemma \ref{LeEnrgEstim} (Energy estimate)]
For $q\in\intervalleE{1}{p}$,
$G \in {\Cb}_{c}^{0,2} \prth{\intervalleff{0}{T} \times \barbulk}$ and
$v \in L^{2} \prth{0,T ; L^{2}\prth{\bulk}}$, let us write the quantity below the temporal integral in \eqref{EQ_Energy} as
$$
J_{G}\prth{s} = J_{G}\prth{s,v,q} : = 
\Psb{v\prth{s}}{\partial_{e_{q}}G\prth{s}} \! -
\frac{1}{2}
\vertii{G\prth{s}}{L^{2}\prth{\bulk}}^{2}.
$$
Now consider an {enumerate} sequence $\prth{G_{n}}_{n\in \mathbb{N}}$ dense in
${\Cb}_{c}^{0,2} \prth{\intervalleff{0}{T} \times \barbulk}$, and observe that it is sufficient to show that there is a constant $C>0$ such that for any $n_{0}\in\mathbb{N}$, we have
\begin{equation}\label{EQ_Energy_0}
\mathbb{E}_{\infty}
\crocHHH{
	\max\limits_{0\leq n\leq n_{0}}
	\accOO{\int_{0}^{T}
	J_{G_{n}}\prth{s} \,
	ds}
}
< C,
\end{equation}
where $\mathbb{E}_{\infty}$ denotes the expectation with respect to $\ProbaQInfty$.
Since the maps
$$
\prth{\Empi\prth{t}}_{t\in\intervalleff{0}{T}}
\mapsto
\int_{0}^{T}
\Prth{\bk{\EmpiF\prth{s} , \partial_{e_{q}}G_{n}\prth{s}}{} -
\frac{1}{2}
\vertii{G_{n}\prth{s}}{L^{2}\prth{\bulk}}^{2}} \,
ds
$$
are continuous with respect to the Skorokhod topology, and since the probability $\ProbaQInfty$ is in the weak closure of $\prth{\ProbaQN}_{N\geq 2}$, then the expectation in \eqref{EQ_Energy_0} can be recast
$$
\lim\limits_{N\to\infty}
\EsperanceMuN \crocHHH{
\max\limits_{0\leq n\leq n_{0}}
\accOO{
	\int_{0}^{T}
	\prtHH{\bk{\EmpiF\prth{s} , \partial_{e_{q}}G_{n}\prth{s}}{} -
	\frac{1}{2}
	\vertii{G_{n}\prth{s}}{L^{2}\prth{\bulk}}^{2}} \,
	ds
}
},
$$
that is, by explaining the empirical measure $\EmpiF\prth{s}$,
\begin{equation}\label{EQ_Energy_1}
\lim\limits_{N\to\infty}
\EsperanceMuN \crocHHH{
\max\limits_{0\leq n\leq n_{0}}
\accOO{
	\int_{0}^{T}
	\prtHH{\underbrace{\frac{1}{N^{\dim}}\sum\limits_{\Is\in \bulkN}\partial_{e_{q}}G_{n}\prth{s,\sfrac{\I}{N}} \timess \eta_{s}\prth{\I}
	-
	\frac{1}{2}
	\vertii{G_{n}\prth{s}}{L^{2}\prth{\bulk}}^{2}}_{= : \; J_{G_{n}}^{N}\prth{s,\eta , q} \; = \; J_{G_{n}}^{N}\prth{s}.}} \,
	ds
}
}
\end{equation}
For a fixed $a>0$, we use the entropy inequality (see \cite[Appendix 1, Section 8]{KipnisScaling99}) to bound the expectation in \eqref{EQ_Energy_1} with
\begin{align}
	\hspace{-2mm}\EsperanceMuN \crocHHH{
		\max\limits_{0\leq n\leq n_{0}}
		\accOO{\int_{0}^{T}
		J_{G_{n}}^{N}\prth{s} \,
		ds}
		}
		&\nonumber\\[2mm]
	&\hspace{-40mm}\scalebox{0.9}{$\displaystyle 
	 \leq \frac{1}{aN^{\dim}}
	\log
	\prtHHH{
	\EsperanceNuN
	\crocHHH{
	\exp
	\prtHH{aN^{\dim}
	\timess
	\max\limits_{0\leq n\leq n_{0}}
	\accOO{\int_{0}^{T}
	J_{G_{n}}^{N}\prth{s} \,
	ds}}
	}
	}
	+ \frac{1}{aN^{\dim}}
	\entropie\prtH{\mu_{N}|\nu_{N}}. $} \label{EQ_Energy_2}
\end{align}
Thanks to the control we have on $\entropie\prth{\mu_{N}|\nu_{N}}$, as outlined in \eqref{EqContrH}, the second term in \eqref{EQ_Energy_2} is bounded by $C_{0}/a$ and does not pose a significant issue while $a$ remains far from $0$.
Now focusing on the first term in \eqref{EQ_Energy_2}, notice that
\begin{align*}
	\EsperanceNuN
	\crocHHH{
	\exp
	\prtHH{aN^{\dim}
	\timess
	\max\limits_{0\leq n\leq n_{0}}
	\accOO{\int_{0}^{T}
	J_{G_{n}}^{N}\prth{s} \,
	ds}}
	}
	&\leq \EsperanceNuN
	\crocHHH{
	\sum\limits_{n=0}^{n_{0}}
	\exp
	\prtHH{aN^{\dim}
	\int_{0}^{T}
	J_{G_{n}}^{N}\prth{s} \,
	ds}
	}\\
	&=
	\sum\limits_{n=0}^{n_{0}}
	\EsperanceNuN
	\crocHHH{
	\exp
	\prtHH{aN^{\dim}
	\int_{0}^{T}
	J_{G_{n}}^{N}\prth{s} \,
	ds}
	}.
\end{align*}
This control allows to bootstrap \ref{ITInEq1} (cf. Lemma \ref{LeUsefullIneq}), providing
\begin{align}
	\limsup\limits_{N\to \infty}
	\Acco{\frac{1}{aN^{\dim}}
	\log
	\prtHHH{
	\EsperanceNuN
	\crocHHH{
	\exp
	\prtHH{aN^{\dim}
	\timess
	\max\limits_{0\leq n\leq n_{0}}
	\accOO{\int_{0}^{T}
	J_{G_{n}}^{N}\prth{s} \,
	ds}}
	}
	}}&\nonumber\\[3mm]
	&\hspace{-95mm}\leq
	\max\limits_{0\leq n\leq n_{0}}
	\limsup\limits_{N\to \infty}
	\Acco{\frac{1}{aN^{\dim}}
	\log
	\prtHHH{
	\EsperanceNuN
	\crocHHH{
	\exp
	\prtHH{aN^{\dim}
	\timess\int_{0}^{T}
	J_{G_{n}}^{N}\prth{s} \,
	ds}}
	}
	}.\label{EQ_Energy_3}
\end{align}
Working on the term between the bracket in \eqref{EQ_Energy_3}, we use the Feynman-Kac's inequality
---
see \cite{BaldassoExclusion17} (Lemma 7.3 in Appendix)
---
with the operator $\LN + a N^{\dim}\,J_{G_{n}}^{N}\prth{s, \point, q}$.
By using the variational formula (Rayleigh quotient) for the largest (principal) eigenvalue of this operator, we are led to
\renewcommand{\gap}{0.90}
\begin{align}
	\hspace{-4mm}
\scalebox{\gap}{$\displaystyle 	\frac{1}{aN^{\dim}}
	\log
	\prtHHH{
	\EsperanceNuN
	\crocHHH{
	\exp
	\prtHH{aN^{\dim}
	\timess\int_{0}^{T}
	J_{G_{n}}^{N}\prth{s} \,
	ds}}
	} $}&\nonumber\\[2mm]
	&\scalebox{\gap}{$\displaystyle \hspace{-78mm}
	\leq
	\int_{0}^{T}
	\raisebox{1mm}{$\displaystyle
	\!\!
	\sup\limits_{\substack{f \text{ density}  \\ \text{with respect}\\ \text{to	}\nu_{N}}}
	$}
	\accOO{
		\int_{\SpaceState}^{}
		\prtHH{\frac{1}{N^{\dim}}\sum\limits_{\Is\in\bulkN}^{} \eta\prth{\I} \partial_{e_{q}}G_{n}\prth{s, \sfrac{\I}{N}}} \,
		f\prth{\eta,\xi} \; 	
		d\nu_{N}\prth{\eta,\xi} +
		{\frac{1}{aN^{\dim}}
		\bk{\LN \sqrt{f},\sqrt{f}}{\hspace{-0.1mm}\nu_{\scalebox{0.4}{$N$}}}}} \,
	ds $}\label{EQ_Energy_4}\\[-2mm]
	&\hspace{25mm}
	\scalebox{\gap}{$\displaystyle
	- \frac{1}{2}\int_{0}^{T}
	\vertii{G_{n}\prth{s}}{L^{2}\prth{\bulk}}^{2} \, 
	ds .$}\nonumber
\end{align}
Considering the integral term into the supremum in \eqref{EQ_Energy_4}, the regularity of $G$ enables to write
$$
\frac{1}{N^{\dim}}\partial_{e_{q}}G_{n}\prth{s, \sfrac{\I}{N}} =
\frac{1}{N^{\dim -1}}
\Prth{
	G_{n}\prth{s,\sfrac{\Is + e_{q}}{N}} -
	G_{n}\prth{s,\sfrac{\I}{N}}
	+ o\prth{1/N}
},
$$
so that
\begin{align*}
	\int_{\SpaceState}^{}
\prtHH{\frac{1}{N^{\dim}}\sum\limits_{\Is\in\bulkN}^{} \eta\prth{\I} \partial_{e_{q}}G_{n}\prth{s, \sfrac{\I}{N}}} \,
f\prth{\eta,\xi} \,	
d\nu_{N}\prth{\eta,\xi} &\\
	&\hspace{-70mm}=
	\int_{\SpaceState}^{}
	\prtHH{
		\frac{1}{N^{\dim-1}}\sum\limits_{\Is\in\bulkN}^{} \eta\prth{\I} 
	\Prth{G_{n}\prth{s,\sfrac{\Is + e_{q}}{N}} -
	G_{n}\prth{s,\sfrac{\I}{N}}}
	} \,
	f\prth{\eta,\xi} \, 	
	d\nu_{N}\prth{\eta,\xi} 
	+ o_{\NN}\prth{1}.
\end{align*}	
Using then a summation by part (mind the compact support of $G$) and the change of variable 
$\eta\mapsto\eta^{\Is,\Is+e_{q}}$, we obtain
\renewcommand{\gap}{0.90}
\renewcommand{\gapp}{-92mm}
\begin{align}
	\scalebox{\gap}{$\displaystyle \int_{\SpaceState}^{}
	\prtHH{\frac{1}{N^{\dim}}\sum\limits_{\Is\in\bulkN}^{} \eta\prth{\I} \partial_{e_{q}}G_{n}\prth{s, \sfrac{\I}{N}}} \,
	f\prth{\eta,\xi} \, 	
	d\nu_{N}\prth{\eta,\xi} - o_{\NN}\prth{1} $} &\nonumber\\
	&\hspace{\gapp}
	=
	\scalebox{\gap}{$\displaystyle \frac{1}{N^{\dim-1}}
	\sum\limits_{\Is\in\bulkN}^{}
	\int_{\SpaceState}^{}
	\eta\prth{\I} \,
	G_{n}\prth{s,\sfrac{\I}{N}}
	\Prth{f\prth{\eta,\xi}-f\prth{\eta^{\Is,\Is+e_{q}},\xi}} \, 	
	d\nu_{N}\prth{\eta,\xi}  $}\nonumber\\
	&\hspace{\gapp}
	=
	\scalebox{\gap}{$\displaystyle \frac{1}{N^{\dim-1}}
	\sum\limits_{\Is\in\bulkN}^{}
	\int_{\SpaceState}^{}
	\eta\prth{\I}
	\underbrace{
		G_{n}\prth{s,\sfrac{\I}{N}}\Croch{\sqrt{f\prth{\eta^{\Is,\Is+e_{q}},\xi}} -  \sqrt{f\prth{\eta,\xi}}}
		\Croch{\sqrt{f\prth{\eta^{\Is,\Is+e_{q}},\xi}} +  \sqrt{f\prth{\eta,\xi}}}	}_{\text{use \ref{ITInEq3} at this point}	} \, 	
	d\nu_{N}\prth{\eta,\xi}$} \nonumber\\
	&\hspace{\gapp}\leq
	\scalebox{\gap}{$\displaystyle \frac{1}{N^{\dim-1}}
	\sum\limits_{\Is\in\bulkN}^{}
	\frac{B}{2}
	\int_{\SpaceState}^{}
	\eta\prth{\I}
	\Croch{\sqrt{f\prth{\eta^{\Is,\Is+e_{q}},\xi}} -  \sqrt{f\prth{\eta,\xi}}}^{2} 	
	d\nu_{N}\prth{\eta,\xi}  $}\label{EQ_Energy_5} \\
	&\hspace{-80mm} +
	\scalebox{\gap}{$\displaystyle \frac{1}{N^{\dim-1}}
	\sum\limits_{\Is\in\bulkN}^{}
	\frac{1}{2B}
	\Prth{G_{n}\prth{s,\sfrac{\I}{N}}}^{2}
	\int_{\SpaceState}^{}
	\eta\prth{\I}
	\Croch{\sqrt{f\prth{\eta^{\Is,\Is+e_{q}},\xi}} +  \sqrt{f\prth{\eta,\xi}}}^{2} 	
	d\nu_{N}\prth{\eta,\xi} $},\label{EQ_Energy_6}
\end{align}
where $B>0$ is to be determined later in this proof.
In \eqref{EQ_Energy_6} we use now \ref{ITInEq4} and the fact that $f$ is a density with respect to $\nu_{N}$ to write
\renewcommand{\gap}{0.94}
\renewcommand{\gapp}{-92mm}
\begin{align}
	\scalebox{\gap}{$\displaystyle \int_{\SpaceState}^{}
\prtHH{\frac{1}{N^{\dim}}\sum\limits_{\Is\in\bulkN}^{} \eta\prth{\I} \partial_{e_{q}}G_{n}\prth{s, \sfrac{\I}{N}}} \,
f\prth{\eta,\xi} \, 	
d\nu_{N}\prth{\eta,\xi} - o_{\NN}\prth{1} $} &\nonumber\\
&\hspace{\gapp}\leq
\scalebox{\gap}{$\displaystyle \frac{1}{N^{\dim-1}}
\sum\limits_{\Is\in\bulkN}^{}
\frac{B}{2}
\int_{\SpaceState}^{}
\eta\prth{\I}
\Croch{\sqrt{f\prth{\eta^{\Is,\Is+e_{q}},\xi}} -  \sqrt{f\prth{\eta,\xi}}}^{2}	
d\nu_{N}\prth{\eta,\xi}
+ \frac{1}{N^{\dim-1}}
\sum\limits_{\Is\in\bulkN}^{}
\frac{2}{B}
\Prth{G_{n}\prth{s,\sfrac{\I}{N}}}^{2}$}\nonumber\\
&\hspace{\gapp}\leq 
\scalebox{\gap}{$\displaystyle
\frac{2B}{d N^{\dim -1}}
\DBulkF\prth{\sqrt{f},\nu_{N}}
+ \frac{2}{B N^{p-1}}
\sum\limits_{\Is\in\bulkN}^{}
\Prth{G_{n}\prth{s,\sfrac{\I}{N}}}^{2}$},\label{EQ_Energy_7}
\end{align}
where we used \hyperref[ITDidi1]{(D1)} to provide \eqref{EQ_Energy_7}.  
By gathering \eqref{EQ_Energy_2}, \eqref{EQ_Energy_4} and \eqref{EQ_Energy_7}, we obtain
\renewcommand{\gap}{1}
\begin{align}
	\hspace{0mm}\EsperanceMuN \crocHHH{
		\max\limits_{0\leq n\leq n_{0}}
		\accOO{\int_{0}^{T}
		J_{G_{n}}^{N}\prth{s} \,
		ds}
		}
		&\nonumber\\[2mm]
	&\scalebox{\gap}{$\displaystyle \hspace{-45mm}\leq \int_{0}^{T}
	\raisebox{1mm}{$\displaystyle
	\!\!
	\sup\limits_{\substack{f \text{ density}  \\ \text{with respect}\\ \text{to	}\nu_{N}}}
	$}
	\Acco{
		\frac{2B}{d N^{\dim -1}}
	\DBulkF\prth{\sqrt{f},\nu_{N}}
	+
		\frac{1}{aN^{\dim}}
		\bk{\LN \sqrt{f},\sqrt{f}}{\hspace{-0.1mm}\nu_{\scalebox{0.4}{$N$}}}} \,
	ds $}\nonumber\\
	&\hspace{-29mm}
	\scalebox{\gap}{$\displaystyle
	+\int_{0}^{T}
	\prtHH{\frac{2}{B N^{\dim-1}}
	\sum\limits_{\Is\in\bulkN}^{}
	\Prth{G_{n}\prth{s,\sfrac{\I}{N}}}^{2}
	-
	\frac{1}{2}\vertii{G_{n}\prth{s}}{L^{2}\prth{\bulk}}^{2}} \, 
	ds + \frac{C_{0}}{a} + o_{\NN}\prth{1} $}\nonumber\\[2mm]
	&\scalebox{\gap}{$\displaystyle \hspace{-45mm}
	\leq
	\int_{0}^{T}
		\raisebox{1mm}{$\displaystyle
		\!\!
		\sup\limits_{\substack{f \text{ density}  \\ \text{with respect}\\ \text{to	}\nu_{N}}}
		$}
		\Acco{
			\Croch{\frac{2B}{d N^{\dim -1}} - \frac{1}{aN^{\dim - 2}}}
		\DBulkF\prth{\sqrt{f},\nu_{N}}} \,
	ds $}\label{EQ_Energy_8}\\
	&\hspace{-29mm}
	\scalebox{\gap}{$\displaystyle
	+ \int_{0}^{T}
		\prtHH{\frac{2}{B N^{\dim-1}}
		\sum\limits_{\Is\in\bulkN}^{}
		\Prth{G_{n}\prth{s,\sfrac{\I}{N}}}^{2}
		-
		\frac{1}{2} \vertii{G_{n}\prth{s}}{L^{2}\prth{\bulk}}^{2}} \, 
	ds +
	\frac{C_{0} + c\prth{b} \alpha}{a} + o_{\NN}\prth{1}, $}\nonumber
\end{align}
where we used Lemma \ref{LeCtrlDiriForm}
to provide the last inequality.
At this point, we make the choice
$B : = dN/2a$
to cancel the supremum in \eqref{EQ_Energy_8}, so that we are left with
\renewcommand{\gap}{1}
\begin{align}
	\hspace{-5mm}\EsperanceMuN \crocHHH{
		\max\limits_{0\leq n\leq n_{0}}
		\accOO{\int_{0}^{T}
		J_{G_{n}}^{N}\prth{s} \,
		ds}
		}
		&\nonumber\\[2mm]
	&\hspace{-40mm}
	\scalebox{\gap}{$\displaystyle
	\leq
	\int_{0}^{T}
	\prtHH{\frac{2a}{d N^{\dim}}
	\sum\limits_{\Is\in\bulkN}^{}
	\Prth{G_{n}\prth{s,\sfrac{\I}{N}}}^{2}
	-
	\frac{1}{2} \vertii{G_{n}\prth{s}}{L^{2}\prth{\bulk}}^{2}} \, 
	ds + \frac{C_{0} + c\prth{b} \alpha}{a} + o_{\NN}\prth{1}. $}
	\label{EQ_Energy_9}
\end{align}
We take now $a : = d/4$ to face the norm with the Riemann sum in \eqref{EQ_Energy_9}, this results in
$$
\EsperanceMuN \crocHHH{
\max\limits_{0\leq n\leq n_{0}}
\accOO{\int_{0}^{T}
J_{G_{n}}^{N}\prth{s} \,
ds}
}
\leq
\frac{4C_{0} + 4c\prth{b} \alpha}{d} + o_{\NN}\prth{1}.
$$
It remains to let $N\to\infty$ to eventually reach \eqref{EQ_Energy_0} with
$
C : = \prth{4C_{0} + 4c\prth{b} \alpha}/{d}.
$
This completes the proof.
\end{proof}

\section*{Table of Notations}\label{TBL_of_NOT}
\addcontentsline{toc}{section}{Table of notations}  

\setlength\LTleft{13,359mm} 
\renewcommand{\arraystretch}{1.3}  
\renewcommand{\gap}{0.8mm}
\begin{longtable}{|c|l|}
\hline
Notation & Description\\
\hline
$\dim$ & Dimension of the field (the road is $(\dim-1)$-dimensional)\\[\gap]
\hline
$\TT$ & One-dimensional torus $\mathbb{R}/\mathbb{Z}$\\
$\Road$ & Macroscopic road\\
$\bulk$ & Macroscopic field $\Road \times \intervalleoo{0}{1}$\\
$\barbulk$ & Closure of $\bulk$\\
$\front$ & Macroscopic frontier of the field $ \partial \Lambda = \Road \times \acco{0,1}$\\
$\Hfront$ & Macroscopic {upper} boundary of the field\\
$\Lfront$ & Macroscopic {lower} boundary of the field\\[\gap]
\hline
$N$ & Size of the microscopic particle system\\[\gap]
\hline
$\TT_{N}$ & One-dimensional discrete torus $\mathbb{Z}/N\mathbb{Z}$\\
$\RoadN$ & Microscopic road\\
$\bulkN$ & Microscopic field $\TT_{N} \times \intervalleE{1}{N-1}$\\
$\Gamma_{N}$ & Microscopic frontier of the field $\partial \bulkN = \TT_{N} \times \acco{0,1}$\\
$\HfrontN$ & Microscopic {upper} boundary of the field\\
$\LfrontN$ & Microscopic {lower} boundary of the field\\[\gap]
\hline
$\I=(i,j)$ & Microscopic point on $\bulkN$ ($i\in\TT_N$ and $j\in\intervalleE{1}{N-1}$)\\
$\K=(k,\ell)$ & Alternative microscopic point on $\bulkN$ if needed\\
$\X=(x,y)$ & Macroscopic point on $\bulk$ ($x\in\TT$ and $y\in\intervalleoo{0}{1}$)\\
$\Z=(z,\omega)$ & Alternative macroscopic point on $\bulk$ if needed\\
$e_{q}$ & $q^{\text{th}}$ canonical vector of $\mathbb{R}^{\dim}$ ($1\leq q \leq p$)\\[\gap]
\hline
$\prth{\eta, \xi} $ & State of the system ($\eta$ for the field and $\xi$ for the road)\\
$\SpaceStateF$ & State space for the field $\acco{0,1}^{\bulkN} \ni \eta$\\
$\SpaceStateR$ & State space for the road $\acco{0,1}^{\LfrontN} \ni \xi$\\
$\SpaceState$ & Whole state space $\SpaceStateF \times \SpaceStateR$\\[\gap]
\hline
$\EmpiF\prth{\eta}$ & Empirical measure on $\SpaceStateF$ associated with $\eta$\\
$\EmpiR\prth{\xi}$ & Empirical measure on $\SpaceStateR$ associated with $\xi$\\
$\Empi\prth{\eta,\xi}$ & Empirical measure on $S_{N}$ associated with $\prth{\eta,\xi}$\\
$\MeasF$ & Set of positive measures on $\SpaceStateF$ bounded by $1$\\
$\MeasR$ & Set of positive measures on $\SpaceStateR$ bounded by $1$\\
$\Meas$ & Cartesian product $\MeasF\times \MeasR$\\[\gap]
\hline
$T$ & Time horizon\\
$d, D$ & Diffusion coefficients\\
$\alpha$ & Exchange coefficient\\
$b$ & Birth rate at the upper boundary\\
$\gamma$ & Parameter of the Bernoulli product measures\\[\gap]
\hline
$\ProbaPN$ & Probability measure on $\SpaceState$ induced by $\mu_{N}$ and $\prth{\eta_{t},\xi_{t}}_{t\in \intervalleff{0}{T}}$\\
$\ProbaQN$ & Probability measure on $\Meas$ induced by $\ProbaPN$ and $\Empi$\\
$\ProbaQInfty$ & A point in the closure of $\prth{\ProbaQN}_{N\geq 2}$\\
$\EsperanceN{\point}$ & Expectation with respect to $\mathbb{P}_{N}^{\,\point}$\\
$\EsperanceMuN$ & Expectation with respect to $\ProbaPN$\\
$\EsperanceNuN$ & Expectation with respect to $\mathbb{P}_{N}^{\hspace{0.15mm}\nu_{\scalebox{0.4}{$N$}}}$\\
$\mathbb{E}_{\infty}$ & Expectation with respect to $\ProbaQInfty$\\[\gap]
\hline
$\Delta$ & Laplacian operator for the field\\
$\DeltaX$ & Laplacian operator for the road\\
$\DeltaXN$ & Discrete Laplacian operator for the road\\
$\partial_{yy}^{N}$ & Discrete Laplacian operator in the $y$-direction\\
$\nabla$ & Gradient operator for the field\\
$\nablaX$ & Gradient operator for the road\\
$\trace$ & Trace operator\\[\gap]
\hline
$G$ & Test functions for the field\\
$H$ & Test functions for the road\\[\gap]
\hline
$\LBulkF$ & Bulk field part of the generator\\
$\LBulkR$ & Bulk road part of the generator\\
$\LRobin$ & Robin exchange part of the generator\\
$\LReaction$ & Reaction exchange part of the generator\\
$\LNeumann$ & Upper reservoir part of the generator\\[\gap]
\hline
$\DN$ & Dirichlet form\\
$\DBulkF$ & Bulk field part of the Dirichlet form\\
$\DBulkR$ & Bulk road part of the Dirichlet form\\
$\DRobin$ & Robin exchange part of the Dirichlet form\\
$\DReaction$ & Reaction exchange part of the Dirichlet form\\
$\DNeumann$ & Upper reservoir part of the Dirichlet form\\[\gap]
\hline
$\entropie\prth{\mu|\nu}$ & Relative entropy of $\mu$ with respect to $\nu$\\[\gap]
\hline
$\Martin\prth{t}$ & The Martingale\\
$\MartinF{G}\prth{t}$ & Field part of the Martingale $\Martin\prth{t}$\\
$\MartinR{H}\prth{t}$ & Road part of the Martingale $\Martin\prth{t}$\\
$\MartinQF{G}\prth{t}$ & Quadratic variation of $\MartinF{G}\prth{t}$\\
$\MartinQR{H}\prth{t}$ & Quadratic variation of $\MartinR{H}\prth{t}$\\
$\MartinBF{G}\prth{t}$ & \glmt{B}-part of the martingale $\MartinQF{G}\prth{t}$\\
$\MartinBR{H}\prth{t}$ & \glmt{B}-part of the martingale $\MartinQR{H}\prth{t}$\\[\gap]
\hline
$\SolSet$ & Set of measures whose densities satisfy \ref{ItWeak1} and \ref{ItWeak2}\\
$\SolSetONE$ & Set of measures whose densities satisfy \ref{ItWeak1}\\
$\SolSetTWO$ & Set of measures whose densities satisfy \ref{ItWeak2}\\[\gap]
\hline
$\Weak{u,v}\prth{t}$ & Functional of the weak formulation $\Weak{u,v} = \WeakF{v,G} + \WeakR{u,H} $\\
$\WeakF{v,G}\prth{t}$ & Functional of the weak formulation in the field\\
$\WeakR{u,H}\prth{t}$ & Functional of the weak formulation on the road\\
\hline
$\UnitUP_{\varepsilon}$ & Upper unit approximation\\
$\UnitLOW_{\varepsilon}$ & Upper unit approximation\\[\gap]
\hline
\end{longtable}
\setlength\LTleft{0cm} 
\renewcommand{\arraystretch}{1}  

\phantomsection  
\addcontentsline{toc}{section}{References}  
\nocite{*} 
\bibliographystyle{siam}  
\bibliography{biblio}

\newpage
\thispagestyle{empty}

\end{document}